\newcommand{\SlopeTriangle}[6]
{

    \pgfplotsextra
    {
        \pgfkeysgetvalue{/pgfplots/xmin}{\xmin}
        \pgfkeysgetvalue{/pgfplots/xmax}{\xmax}
        \pgfkeysgetvalue{/pgfplots/ymin}{\ymin}
        \pgfkeysgetvalue{/pgfplots/ymax}{\ymax}

        \pgfmathsetmacro{\xArel}{#1}
        \pgfmathsetmacro{\yArel}{#3}
        \pgfmathsetmacro{\xBrel}{#1-#2}
        \pgfmathsetmacro{\yBrel}{\yArel}
        \pgfmathsetmacro{\xCrel}{\xArel}

        \pgfmathsetmacro{\lnxB}{\xmin*(1-(#1-#2))+\xmax*(#1-#2)} 
        \pgfmathsetmacro{\lnxA}{\xmin*(1-#1)+\xmax*#1} 
        \pgfmathsetmacro{\lnyA}{\ymin*(1-#3)+\ymax*#3} 
        \pgfmathsetmacro{\lnyC}{\lnyA+#4*(\lnxA-\lnxB)}
        \pgfmathsetmacro{\yCrel}{\lnyC-\ymin)/(\ymax-\ymin)} 

        \coordinate (A) at (rel axis cs:\xArel,\yArel);
        \coordinate (B) at (rel axis cs:\xBrel,\yBrel);
        \coordinate (C) at (rel axis cs:\xCrel,\yCrel);

        \draw[#6]   (A)-- node[anchor=north] {#5}
                    (B)--
                    (C)--
                    cycle;
    }
}
\newcommand{\eq}{:=}
\newcommand{\grad}{\boldsymbol \nabla}
\renewcommand{\div}{\grad \cdot}
\newcommand{\BA}{\boldsymbol A}
\newcommand{\BI}{\boldsymbol I}
\newcommand{\BM}{\boldsymbol M}
\newcommand{\ba}{\boldsymbol a}
\newcommand{\bb}{\boldsymbol b}
\newcommand{\bd}{\boldsymbol d}
\newcommand{\be}{\boldsymbol e}
\newcommand{\bm}{\boldsymbol m}
\newcommand{\bn}{\boldsymbol n}
\newcommand{\bx}{\boldsymbol x}
\newcommand{\by}{\boldsymbol y}
\newcommand{\CE}{\mathcal E}
\newcommand{\CF}{\mathcal F}
\newcommand{\CO}{\mathcal O}
\newcommand{\CP}{\mathcal P}
\newcommand{\LH}{\mathscr H}
\newcommand{\LK}{\mathscr K}
\newcommand{\TA}{\textup A}
\newcommand{\N}{\mathbb N}
\newcommand{\Z}{\mathbb Z}
\newcommand{\R}{\mathbb R}
\newcommand{\C}{\mathbb C}
\newcommand{\bzero}{\boldsymbol 0}
\newcommand{\tf}{\widetilde f}
\newcommand{\tchi}{\widetilde \chi}
\newcommand{\cutoff}{\chi}
\newcommand{\can}[1]{\be_{#1}}
\newcommand{\bxx}{\boldsymbol{ x }}
\newcommand{\bxi}{\boldsymbol{\xi}}
\newcommand{\xxhm}{\bxx^{\hbar,\bm}}
\newcommand{\xihn}{\bxi^{\hbar,\bn}}
\newcommand{\pp}{{\rm p}}
\newcommand{\dist}{\operatorname{dist}}
\newcommand{\GS }[3]{\Phi_{#1,#2,#3}}
\newcommand{\GShknot}{\GS{\hbar}{\bx_0}{\bxi_0}}
\newcommand{\GShknotp}{\GS{\hbar}{\bx_0'}{\bxi_0'}}
\newcommand{\CIB}{C^\infty_{\rm b}}
\newcommand{\gs}[3]{\Psi_{#1,#2,#3}}
\newcommand{\gskmn}{\gs{k}{\bm}{\bn}}
\newcommand{\gskmnp}{\gs{k}{\bm'}{\bn'}}
\newcommand{\gshmn}{\gs{\hbar}{\bm}{\bn}}
\newcommand{\gshmnp}{\gs{\hbar}{\bm'}{\bn'}}
\newcommand{\xxkm}{\bx^{k,\bm}}
\newcommand{\xikn}{\bxi^{k,\bn}}
\newcommand{\dx}{\mathrm{d}\bx}
\newcommand{\dy}{\mathrm{d}\by}
\newcommand{\balpha}{\boldsymbol{\alpha}}
\newcommand{\bbeta}{\boldsymbol{\beta}}
\newcommand{\bgamma}{\boldsymbol{\gamma}}
\newcommand{\bdelta}{\boldsymbol{\delta}}
\newcommand{\tr}{\operatorname{tr}}
\newcommand{\HH}{\widehat{H}}
\newtheorem{theorem}{Theorem}
\newtheorem{lemma}[theorem]{Lemma}
\newtheorem{corollary}[theorem]{Corollary}
\newtheorem{proposition}[theorem]{Proposition}
\newtheorem{remark}[theorem]{Remark}
\numberwithin{equation}{section}
\numberwithin{theorem}{section}
\begin{document}

\title{Efficient approximation of high-frequency Helmholtz solutions by Gaussian coherent states}
\author{T. Chaumont-Frelet$^\dagger$}
\author{V. Dolean$^\ddagger$}
\author{M. Ingremeau$^\sharp$}

\address{\vspace{-.5cm}}
\address{\noindent \tiny \textup{$^\dagger$University C\^ote d'Azur, Inria, CNRS, LJAD}}
\address{\noindent \tiny \textup{$^\ddagger$University C\^ote d'Azur, CNRS, LJAD and University of Strathclyde}}
\address{\noindent \tiny \textup{$^\sharp$University C\^ote d'Azur, CNRS, LJAD}}

\maketitle
\thispagestyle{empty}

\begin{abstract}
We introduce new finite-dimensional spaces specifically designed to approximate the solutions
to high-frequency Helmholtz problems with smooth variable coefficients in dimension $d$. These
discretization spaces are spanned by Gaussian coherent states, that have the key property to be
localised in phase space. We carefully select the Gaussian coherent states spanning the
approximation space by exploiting the (known) micro-localisation properties of the solution.
For a large class of source terms (including plane-wave scattering problems), this choice leads
to discrete spaces that provide a uniform approximation error for all wavenumber $k$ with a number
of degrees of freedom scaling as $k^{d-1/2}$, which we rigorously establish. In comparison, for
discretization spaces based on (piecewise) polynomials, the number of degrees of freedom has to
scale at least as $k^d$ to achieve the same property. These theoretical results are illustrated by
one-dimensional numerical examples, where the proposed discretization spaces are coupled with
a least-squares variational formulation.

\vspace{.2cm}

\noindent
{\sc Key words.}
Gabor frames, Helmholtz equation, high-frequency problems
\end{abstract}

\section{Introduction}

Time-harmonic wave propagation is a mechanism at the center of a large number of
physical and industrial applications. We may cite, among many, radar imaging \cite{dorf_2006a},
or seismic prospection \cite{tarantola_1984a}. In practice, numerical methods are required to
approximately simulate the propagation of waves, and although several methods are available,
it is still very challenging to compute accurate approximations in
the high-frequency regime.

Here, we consider the scalar Helmholtz equation, which is probably the simplest model
for this kind of problems. Specifically, given a compactly supported right-hand side
$f: \R^d \to \C$, our model problem is to find $u: \R^d \to \C$ such that
\begin{subequations}
\label{eq_helmholtz_intro}
\begin{equation}
\label{eq_volume_intro}
-k^2 \mu u-\div \left(\BA \grad u\right) = f \text{ in } \R^d,
\end{equation}
where $\mu$ and $\BA$ are (given) smooth coefficients that are respectively equal to
$1$ and $\BI$ outside a ball of radius $R > 0$, and $k > 0$ is the (given) wavenumber.
This equation is supplemented with the Sommerfeld radiation condition at infinity. Namely,
we require that
\begin{equation}
\label{eq_sommerfeld_intro}
\frac{\partial u}{\partial |\bx|}(\bx) - ik u(\bx)
=
o\left( |\bx|^{(-d +1)/2}\right)
\text{ as }
|\bx| \to +\infty.
\end{equation}
\end{subequations}
A particularly important scenario covered by \eqref{eq_helmholtz_intro} is the scattering of a
plane-wave, where the right-hand side takes the form
\begin{equation}
\label{eq_rhs_scattering}
f \eq \left (k^2 \mu + \div \left (\BA \grad \cdot\right )\right ) e^{ik\bd\cdot \bx},
\end{equation}
where $\bd \in \R^d$, $|\bd| = 1$ is the incidence direction
(such right-hand sides are compactly supported due to the assumptions on $\mu$ and $\BA$).

As we propose a ``volumic'' method, we will actually replace the Sommerfeld radiation
condition \eqref{eq_sommerfeld_intro} by a Perfectly Matched Layer (PML). This approach is
entirely standard \cite{berenger_1994,collino_monk_1998a,galkowski2021perfectly}, and amounts
to slightly modifying the coefficients $\mu$ and $\BA$ in \eqref{eq_volume_intro}.
This process is detailed in Section \ref{section_model_problem}.
We therefore do not directly discretize \eqref{eq_helmholtz_intro}, but rather,
an alternative version where the coefficients have been modified away from the origin
in \eqref{eq_volume_intro} and the Sommerfeld radiation condition \eqref{eq_sommerfeld_intro}
replaced by the requirement that the solution sits in $H^1(\R^d)$. Crucially, the solutions to
both problem coincide close to the origin. In what follows, we work with modified coefficients
$\mu$ and $\BA$ that incorporate a PML, but keep the same notation for simplicity.

In this work, we investigate the use of discretization spaces based on Gaussian coherent
states (GCS), that is, functions of the form
\begin{equation*}
\Phi_{k,\bx_0,\bxi_0}(\bx)
\eq
\left (\frac{k}{\pi}\right )^{d/4} e^{-\frac{k}{2}|\bx-\bx_0|^2} e^{-ik\bxi_0 \cdot (\bx-\bx_0)},
\end{equation*}
where $\bx_0,\bxi_0 \in \R^d$ are user-selected parameters.
The idea of decomposing a function as a discrete sum of Gaussian
coherent states goes back to \cite{Gabor}.  Here, following
\cite{chaumontfrelet_ingremeau_2022a,daubechies_grossman_meyer_1986a},
we focus on a lattice of phase-space points
$[\xxkm,\xikn] \eq \sqrt{(kR)^{-1}\pi} [\bm,\bn]$, $[\bm,\bn] \in \Z^{2d}$,
that are spaced by $\sim (kR)^{-1/2}$, and we write
\begin{equation*}
\gskmn \eq \Phi_{k,\xxkm,\xikn}.
\end{equation*}
Then, our a discretization space is of the form
\begin{equation*}
W_\Lambda
\eq
\mathrm{Vect} \left\{ \gskmn; \;\; [\bm,\bn] \in \Lambda \right\},
\end{equation*}
where $\Lambda \subset \Z^{2d}$ is a carefully chosen set of indices.

For the sake of simplicity, we will assume in the introduction that the domain is non-trapping.
Our first result is that if $\Lambda$ is chosen as
\begin{equation*}
\Lambda \eq \left \{
[\bm,\bn] \in \Z^{2d};
\;\;
|\xxkm|^2 + |\xikn|^2 \leq \rho kR
\right \},
\end{equation*}
for $\rho > 0$, we have
\begin{equation}
\label{eq_approximation_fixed}
\dim W_\Lambda \simeq \rho^d (kR)^d
\;\; \text{ and } \;\;
k^2 \min_{w \in W_\Lambda} \|u-w\|_{\HH^1(\R^d)}
\leq
C \rho^{-1/2} \|f\|_{L^2(\R^d)},
\end{equation}
for a general $f \in L^2(\R^d)$ with $\operatorname{supp} f \subset B(0,R)$,
where $\|{\cdot}\|_{\HH^1(\R^d)}$ is a $H^1(\R^d)$-norm including a weight at infinity
(see \eqref{eq_weighted_norm_k} below). As we describe
in more length afterwards, this result is not very impressive on its own. Specifically,
it is a standard approximation result similar to polynomial approximations: we need a fixed
number of points per wavelength to achieve a constant accuracy. Our second result, which is key,
deals with the case where $f$ takes the particular form \eqref{eq_rhs_scattering}. In this case,
we select
\begin{equation*}
\Lambda
\eq
\left \{
[\bm,\bn] \in \Z^{2d};
\;\;
|p(\xxkm,\xikn)|
\leq
(kR)^{-1/2+\varepsilon}
\right \}
\end{equation*}
where $\varepsilon > 0$ can be selected arbitrarily small and
\begin{equation}\label{e:defp}
p(\bx,\bxi) \eq \BA(\bx) \bxi \cdot \bxi -\mu(\bx) \qquad \forall \bx,\bxi \in \R^d,
\end{equation}
is the principal symbol associated with the differential operator in \eqref{eq_helmholtz_intro},
where the coefficients include a PML.
We then have
\begin{equation}
\label{eq_approximation_asymptotic}
\dim W_\Lambda \simeq (kR)^{d-1/2+\varepsilon}
\;\; \text{ and } \;\;
\min_{w \in W_\Lambda} \|u-w\|_{\HH^1(\R^d)} \leq C_{\varepsilon,m} (kR)^{-m}
\qquad
\forall m \in \N.
\end{equation}
It means that for right-hand sides corresponding to scattering problems (and actually,
a wider family of right-hand sides), Gaussian coherent states provide an accurate
solution with $\CO((kR)^{d-1/2+\varepsilon})$ DOFs. In fact, the convergence is even
super-algebraic as the frequency increases.

To put \eqref{eq_approximation_fixed} and \eqref{eq_approximation_asymptotic}
into perspective, we compare them with other standard methods. Actually,
there are several options to numerically solve \eqref{eq_helmholtz_intro} (either with
the Sommerfeld condition \eqref{eq_sommerfeld_intro} or with a PML approximation), that
we review below.

The most versatile approach is probably the finite element method (FEM). The method hinges
on a triangulation of the domain into elements of size $h$, and piecewise polynomial basis
functions of degree $p$. It can be shown that if $p$ grows logarithmically with $k$, then
the condition that $kh/p$ is constant provides (at least) a constant accuracy as $k$ increases
\cite{lafontaine_spence_wunsch_2022a,melenk_sauter_2010a,melenk_sauter_2011a}.
As a result, high-order FEM essentially requires $\mathcal O((kR)^d)$ degrees of freedom (DOFs)
to achieve a constant accuracy. The resulting matrix is sparse.

Trefftz-like methods are similar to FEM in that they also rely on a mesh of the domain,
but the polynomial shape functions are replaced by local solutions to the Helmholtz problem,
such as plane-waves \cite{hiptmair_moiola_perugia_2016a}, or generalised plane-waves
\cite{imbert2014generalized,imbert2021amplitude}. There are many ways to ``glue''
these local solution together, including partition of unity methods
\cite{melenk1996partition},
least squares methods \cite{monk1999least}, the
ultra weak variational method \cite{cessenat2003using},
the discontinuous enrichment method \cite{farhat2001discontinuous}
or the variational theory of complex rays \cite{riou2008multiscale}.
While these methods typically induce a large reduction of the number of DOFs as
compared to FEM, they usually still need at least $\mathcal O((kR)^d)$ DOFs,
see, e.g., 
\cite{chaumontfrelet_valentin_2020a,gittelson_hiptmair_perugia_2009a,hiptmair_moiola_perugia_2011a}.
Similar to FEM, the resulting matrix is sparse.

The next family of methods we want to mention are boundary element methods (BEM)
\cite{sauter_schwab_2010a}. These methods rely on boundary integral equations which,
strictly speaking, are not available for smoothly varying coefficients, since the expression
of Green's function must be available. It is nevertheless interesting to include them in the
comparison. These methods typically provide a constant accuracy with only
$\mathcal O((kR)^{d-1})$ DOFs \cite{galkowski_spence_2022a}.
However, the resulting matrix is dense and its entries are costly to compute.
These issues can be mitigated using compression techniques,
such as the fast multi-pole method \cite{Greengard:1987:FMM} or
hierarchical matrices \cite{Hackbusch:2015:HMM}.
 
Finally, asymptotic methods rely on the fact that when the frequency is very large,
it is sometimes possible to simplify the search for the solution of the Helmholtz
equation and the properties of the solution itself to computations involving only the underlying
classical dynamics \cite{engquist_runborg_2003a}. This is done using tools of semi-classical
analysis, such as the WKB method and can lead to discrete problems with a number of DOFs
independent of $k$.
The main drawback of these approaches is that they are only asymptotically valid: they do
not converge for fixed value of $k$. Besides, it is not always clear from which range of $k$
they are relevant.

As compared to FEM, the proposed GCS method thus gains ``half a dimension'' at high-frequencies,
but it is still half a dimension higher than BEM. As compared to BEM however, our methodology
has the advantage to apply in a generic framework where the Green's function is not available.
Another important comment is that in the (very) high-frequency regime, our method is more
expensive than asymptotic methods. However, asymptotic methods cannot converge at fixed $k$,
which our method does. This is summarized in Table \ref{table_costs}.

\begin{table}
\centering
\begin{tabular}{|c|cccc|}
\hline
                         & FEM      & GCS          & BEM          & asymptotic \\
\hline
Cost                     & $(kR)^d$ & $(kR)^{d-1/2}$ & $(kR)^{d-1}$ & $1$        \\
\hline
Heterogeneous media       & yes      & yes            & no           & yes        \\
\hline
Convergence at fixed $k$ & yes & yes & yes & no \\
\hline
\end{tabular}
\caption{Comparison of commonly used discretization techniques}
\label{table_costs}
\end{table}

In addition to the approximability results \eqref{eq_approximation_fixed} and
\eqref{eq_approximation_asymptotic}, we also present a least-squares method
based on Gaussian coherent states for Problem \eqref{eq_helmholtz_intro}. As we show,
the convergence of the method is easily established. Besides, although the matrix
is dense, we show that the entries decay super-algebraically away from the diagonal.
As a result, the matrix is essentially banded, and we believe that efficient linear
solvers can be devised. This will be analysed in more depth in future works.



We finally present a set of one-dimensional numerical experiments using the proposed
least-squares method. Although the setting is rather simple, the results perfectly fit
the theoretical analysis and readily shows that proposed approach allows for a drastic
reduction of the number of DOFs in the high-frequency regime.

To the best of our knowledge, our micro-locally adapted spaces of Gaussian coherent states
appear to be entirely original, but we would like to mention that similar basis functions
have already been employed to discretize PDE problems. In particular, generalised coherent states
like Hagedorn wavepackets were used to describe the solution of time-dependent Schr\"odinger
equation in \cite{Faou:2009:CSQ,Gradinaru:2014:CSW,Gradinaru:2021:HWS,lasser2020computing}.

We would like to emphasize that the main goal of our work is to understand how
many DOFs are required to maintain a constant accuracy as $k$ increases, rather than
deriving precise convergence rates in terms of DOFs for a fixed frequency. For the
sake of completeness, we nevertheless provide a quick comparison here. Assuming
that $k$ is fixed and that the right-hand side $f$ is smooth, the results in
\cite{chaumontfrelet_ingremeau_2022a} (see also \cite[Chapter 11]{Gro} and
Corollary \ref{corollary_least_squares_fixed} below) show that the convergence
to the solution will be super-algebraic in terms of the number of DOFs. Without
further assumption on $f$, such rates are equivalent to $hp$ finite element
and Trefftz methods. Under the additional assumption that the solution $u$ is
(piecewise) analytic, $hp$ finite element and Trefftz methods can achieve an
exponential convergence rate in terms of DOFs \cite{schwab_1998a}, with an improved
rate for Trefftz methods as compared to finite element methods
\cite{hiptmair_moiola_perugia_2011a,melenk_1999a}. To the best our knowledge,
it is an open question whether such a result holds true for the present method.

The remainder of our work is organised as follows. In Section \ref{sec:setting},
we precise the setting and state our key approximation result \eqref{eq_approximation_asymptotic}
in its most general form. Section \ref{sec:proofs} contains the proof of our findings.
In Section \ref{sec:helmholtz}, we apply the general theory of Sections \ref{sec:setting}
and \ref{sec:proofs} to our scattering model problem with PML. Numerical examples are reported
in Section \ref{sec:numerics}. Finally, Appendix \ref{sec:GS} collects technical results
concerning Gaussian coherent states.

\section{Setting and main results}
\label{sec:setting}

\subsection{Notations}
\label{section_h_notations}

Throughout this work $\hbar \in \LH \subset (0,1]$ will denote a small parameter.
When applying our general results to the Helmholtz equation, we will have $\hbar \sim (kR)^{-1}$,
so that considering the set $(0,1]$ amounts to ignoring low frequencies, and focusing on
high frequencies when $\hbar \to 0$. For the sake of generality, we restrict our analysis
to a subset $\LH \subset (0,1]$ for reasons that will become apparent in Section
\ref{sec:helmholtz}. Notice that the case $\LH = (0,1]$ is not excluded.

\subsubsection{Basic notation}

The canonical basis of $\R^d$ or of $\C^d$ will be denoted by $(\boldsymbol{e}_1,..., \boldsymbol{e}_d)$.
If $\bx,\by \in \C^d$, we write
\begin{equation*}
\bx \cdot \by  \eq \sum_{j=1}^d x_j y_j
\end{equation*}
without complex conjugation on the second argument, and
$|\bx| = (\bx \cdot \overline{\bx})^{1/2}$
is the usual Euclidean norm.

For a multi-index $\balpha \in \N^d$, $[\balpha] \eq \alpha_1+\dots+\alpha_d$
denotes its usual $\ell_1$ norm. If $v: \R^d \to \C$, the notation
\begin{equation*}
\partial^{\balpha} v
\eq
\frac{\partial^{\alpha_1}}{\partial x_1}
\dots
\frac{\partial^{\alpha_d}}{\partial x_d} v
\end{equation*}
is employed for the partial derivatives in the sense of distributions, whereas
$\bx^{\balpha} \eq x_1^{\alpha_1} \cdot \ldots \cdot x_d^{\alpha_d}$.
Finally, if $\bbeta \in \N^d$ is another multi-index, we will sometimes need the notation
\begin{equation*}
\left (
\begin{array}{c}
\balpha \\ \bbeta
\end{array}
\right )
=
\prod_{j=1}^d
\left (
\begin{array}{c}
\alpha_j \\ \beta_j
\end{array}
\right ),
\end{equation*}
and the notation $\balpha \leq \bbeta$ means that $\alpha_j \leq \beta_j$
for all $j \in \{1,\dots,d\}$. 

If $\bn \in \Z^d$, we employ the notation $|\bn|^2 \eq n_1^2 + \dots + n_d^2$
for its $\ell_2$ norm. Finally, if $\Lambda \subset \Z^{2d}$, $\ell^2(\Lambda)$
has its usual definition, and we denote by $\|\cdot\|_{\ell^2(\Lambda)}$ its usual norm.

\subsubsection{Key functional spaces}

In what follows, $L^2(\R^d)$ is the usual Lebesgue space of complex-valued square integrable
functions over $\R^d$. The usual norm and inner products of $L^2(\R^d)$ are respectively
denoted by $\|\cdot\|_{L^2(\R^d)}$ and $(\cdot,\cdot)$.

Since we are dealing with the (unbounded) $\R^d$ space, following
\cite{chaumontfrelet_ingremeau_2022a}, our analysis will require the
weighted Sobolev spaces
\begin{equation*}
\HH^p(\R^d) \eq \left \{
v \in L^2(\R^d) \; | \;
\bx^{\balpha} \partial^{\bbeta} v \in L^2(\R^d)
\quad \forall \balpha,\bbeta \in \N^d; \; [\balpha],[\bbeta] \leq p
\right \},
\end{equation*}
that we equip with the family of equivalent $\hbar$-weighted norms given by
\begin{equation*}
\|v\|_{\HH^p_\hbar(\R^d)}^2
\eq
\sum_{[\balpha] \leq p} \sum_{q \leq p- [\balpha]}
\hbar^{2[\balpha]} \||\bx|^q \partial^{\balpha} v\|_{L^2(\R^d)}^2
\end{equation*}
for all $p \in \N$.

$C^0(\R^d)$ is the set of complex-valued continuous functions defined over $\R^d$,
and $C^\ell(\R^d)$ is the set of functions $v: \R^d \to \C$ such that
$\partial^{\balpha} v \in C^0(\R^d)$ for all $\balpha \in \N^d$ with $[\balpha] \leq \ell$.
We introduce the notation
\begin{equation*}
\|v\|_{C^\ell(\R^d)}
\eq
\max_{\substack{\balpha \in \N^d \\ [\balpha] \leq \ell}}
\max_{\bx \in \R^d} |(\partial^{\balpha} v)(\bx)|
\qquad \forall v \in C^\ell(\R^d)
\end{equation*}
and $C^\ell_{\rm b}(\R^d)$ is the subset of functions $v \in C^\ell(\R^d)$
such that $\|v\|_{C^\ell(\R^d)} < +\infty$. We also set
\begin{equation*}
C^\infty_{\rm b}(\R^d) \eq \bigcap_{\ell \in \N} C^\ell_{\rm b}(\R^d).
\end{equation*}
Finally, if $\Omega \subset \R^d$ is an open set, we denote by $C_c^\infty(\Omega)$
the set of smooth functions whose support is a compact subset of $\Omega$.

\subsection{The frame of Gaussian coherent states}

The goal of this work is to efficiently approximate the solution $u_\hbar$
to the equation $P_\hbar u_\hbar = f$ with a finite span of Gaussian coherent
states. For $[\bm,\bn] \in \Z^{2d}$, we thus consider the Gaussian state
\begin{equation*}
\gshmn(\bx)
\eq
(\pi\hbar)^{-d/4}
e^{-\frac{1}{2\hbar}|\bx-\xxhm|^2} e^{\frac{i}{\hbar}\xihn \cdot (\bx-\xxhm)},
\end{equation*}
where $\xxhm \eq \sqrt{\pi\hbar} \bm$ and $\xihn \eq \sqrt{\pi\hbar} \bn$.
The family of Gaussian coherent states $(\gshmn)_{[\bm,\bn] \in \Z^{2d}}$
actually forms a \emph{frame} over $L^2(\R^d)$, meaning
there exists two constants $0 < \alpha < \beta < +\infty$ solely
depending on $d$ such that
\begin{equation*}
\alpha \|v\|_{L^2(\R^d)}^2
\leq
\sum_{[\bm,\bn] \in \Z^{2d}} |(v,\gshmn)|^2
\leq
\beta \|v\|_{L^2(\R^d)}^2 \qquad \forall v \in L^2(\R^d).
\end{equation*}
This result was first proved in \cite{daubechies_grossman_meyer_1986a}, but the idea of decomposing a function as a discrete sum of Gaussian states goes back to \cite{Gabor},
where it was proved that the span of $(\gshmn)_{[\bm,\bn] \in \Z^{2d}}$ is dense in $L^2(\R^d)$.

Actually, the frame property implies that there exists another family of functions
$(\gshmn^\star)_{[\bm,\bn] \in \Z^{2d}}$ called the dual frame such that
\begin{equation}
\label{eq_dual_frame}
v
=
\sum_{[\bm,\bn] \in \Z^{2d}} (v,\gshmn^\star) \gshmn
\end{equation}
for all $v \in L^2(\R^d)$.
It is thus clear that any $v \in L^2(\R^d)$ may be well-approximated by (a large number of)
Gaussian states. As we are going to develop hereafter, when considering the solution to
a high-frequency PDE problem, a good approximation may be obtained with few
Gaussian states, by carefully selecting the indices $[\bm,\bn]$ in \eqref{eq_dual_frame}.

\begin{remark}[General expansions in the Gaussian frame]
\label{rem:BoundedUtile}
The family $(\gshmn)$ is not a Riesz basis,  so that the expansion \eqref{eq_dual_frame} of $v$
as a sum of $\gshmn$ is not unique.  However,  a crucial property of \eqref{eq_dual_frame} is
that this expansion is stable, in the sense that 
\begin{equation*}
\sum_{[\bm,\bn] \in \Z^{2d}} |(v,\gshmn^\star)|^2 \leq \gamma \|v\|_{L^2(\R^d)}^2,
\end{equation*}
where $\gamma$ only depends on $d$, the dual frame being itself a frame. This is especially
important at the numerical level in the presence of round-off errors
\cite{Adcock:2019:FNA}.
\end{remark}

\subsection{Settings and key assumptions}
\label{ssec:general}

Throughout this work, we consider a second order differential operator on
$\R^d$ depending on $\hbar$, and taking the form
\begin{equation}
\label{eq:FormeGeneraleOperateur}
(P_{\hbar} v)(\bx)
=
\hbar^2
\sum_{j,\ell=1}^d
a_{j\ell}^{\hbar}(\bx)
\frac{\partial^2 v}{\partial x_j \partial x_\ell}(\bx)
+
i\hbar\sum_{j=1}^d b_{j}^\hbar(\bx) \frac{\partial v}{\partial  x_j }(\bx)
+
c^\hbar(\bx) v(\bx),
\end{equation}
where $a_{j\ell}^\hbar,b_j^\hbar,c^\hbar \in \CIB(\R^d)$ for $1 \leq j,\ell \leq d$.
For the sake of simplicity, we introduce
\begin{equation*}
C_{{\rm coef},p}
\eq
\sup_{\hbar \in \LH}
\left (
\sum_{j,\ell=1}^d \|a_{j,\ell}^\hbar\|_{C^p(\R^d)}
+
\sum_{j=1}^d \|b_j^\hbar\|_{C^p(\R^d)}
+
\|c^\hbar\|_{C^p(\R^d)}
\right )
\qquad
\forall p \in \N,
\end{equation*}
and assume that $C_{{\rm coef},p} < +\infty$ for all $p \in \N$.

The principal symbol of $P_\hbar$ is the function $p_\hbar \in C^\infty(\R^{2d})$ defined by
\begin{equation}
\label{eq_symbol}
p_\hbar(\bx,\bxi)
\eq
\sum_{j,\ell=1}^d
a_{j,\ell}^\hbar(\bx) \xi_i \xi_j
+
\sum_{j=1}^d b_j^\hbar(\bx) \xi_j
+
c^\hbar(\bx).
\end{equation}
For the sake of shortness, we will often write
$\mathrm{p}_\hbar (\bm,\bn) \eq p_\hbar(\xxhm,\xihn)$
for $[\bm,\bn] \in \Z^{2d}$.

\begin{remark}[$\hbar$-dependent symbol]
When considering the standard Helmholtz differential operator
$P_\hbar v \eq -v -\hbar^2 \Delta v$ the symbol simply
reads $p(\bx,\bxi) = 1-|\bxi|^2$ and in particular,
it does not depend on $\hbar$. It is however interesting
to allow for a mild dependence on $\hbar$ in the symbol.
This is for instance the case when considering Helmholtz
problems in the presence of dissipative materials. In this case,
the differential operator reads $P_\hbar v \eq -v -i\gamma \hbar v-\hbar^2 \Delta v$,
where the function $\gamma \geq 0$ models the dissipation. One readily
see in this case that the symbol is $p_\hbar(\bx,\bxi) = 1-i\gamma(\bx)\hbar-|\bxi|^2$.
\end{remark}

\begin{remark}[non-divergence form]
In contrast to \eqref{eq_helmholtz_intro} in the introduction, we present our model PDE
in non-divergence form in \eqref{eq:FormeGeneraleOperateur}. The key reason behind this
choice is that it makes the link between the PDE operator and its symbol simpler. Notice
that since we are only considering smooth coefficients in this work, this choice is absolutely
not restrictive, and \eqref{eq_helmholtz_intro} can be easily recast into
\eqref{eq:FormeGeneraleOperateur}.
\end{remark}

Along with the smoothness of the coefficients, we make two key assumptions.
First, we assume that $P_\hbar: \HH^p(\R^d) \to \HH^p(\R^d)$ is invertible
with the norm of $P_\hbar^{-1}$ being polynomially bounded in $\hbar^{-1}$.
Specifically, we assume that for all $f \in L^2(\R^d)$ there exists a unique
$u_\hbar \in L^2(\R^d)$ such that $P_\hbar u_\hbar = f$. In addition, there exists $N \in \N$
such that for all $p \in \N$, if $f \in \HH^p(\R^d)$, then $u_\hbar \in \HH^p(\R^d)$ with
\begin{equation}
\label{eq_polynomial_resolvant}
\|u_\hbar\|_{\HH_\hbar^p(\R^d)}
\leq
C_{{\rm sol},p} \hbar^{-N} \|f\|_{\HH_\hbar^p(\R^d)}
\quad
\forall \hbar \in \LH
\end{equation}
for some constant $C_{{\rm sol},p}$ independent of $\hbar$.

In the context of high-frequency scattering, these assumptions are reasonable and
hold in a variety of situations. We refer the reader to Remark \ref{rem:ResolEst}
in Section \ref{section_model_problem} below where we expand on that aspect.

Our second assumption is that there exists a value $\delta_0 > 0$ such that
\begin{equation}
\label{eq_assumption_symbol_bounded}
\exists D_0>0 \text{ such that } \forall \hbar \in \LH, ~~ 
\{(\bx,\bxi)\in \R^{2d} ; \; |p_\hbar(\bx,\bxi)| < \delta_0\} \subset B(0, D_0).
\end{equation}

In the remainder of this work, we allow generic constants $C$
to depend on $\{C_{{\rm coef},p}\}_{p \in \N}$, $\{C_{{\rm sol},p}\}_{p \in \N}$,
$N$ and $D_0$. We also employ the notation $C_{\alpha,\beta,\dots}$ if the constant
$C$ is additionally allowed to depend on other previously introduced quantity
$\alpha,\beta,\dots$

\subsection{Statement of the approximability result}
\label{ssec:statement}

Our main result is that, if $f$ is micro-localised near the set
$\{ (\bx, \bxi) \in \R^{2d}; p_\hbar(\bx, \bxi)=0\}$, then so is the solution
$u_\hbar$ to $P_\hbar u_\hbar = f$. This is a standard result when micro-localisation
is understood in terms of pseudo-differential operators (see for instance
\cite[Theorem 6.4]{zworski2012semiclassical}), but here,  by micro-localisation properties,
we mean that $f$ can be approached by a linear combination of $\gshmn$ with $[\bm,\bn]$ in
a certain region of $\Z^{2d}$. Hence, our results may not be easily recovered from
standard results in semiclassical analysis (see e.g. \cite[Theorem 6.4]{zworski2012semiclassical}).

\begin{theorem}[Approximability for Gaussian state right-hand sides]
\label{theorem_approximability}
Let $0 < \varepsilon < 1/2$ and $0 \leq \alpha < \delta_0/2$.
For $\hbar \in \LH$, consider the right-hand side
\begin{equation*}
f_\hbar \eq \sum_{[\bm,\bn] \in \Lambda_{\hbar,{\rm rhs}}} F_{\bm,\bn}^\hbar \gshmn.
\end{equation*}
where $F^\hbar \in \Lambda_{\hbar,\rm rhs}$ with 
\begin{equation}\label{eq:FormeRHS}
\Lambda_{\hbar,{\rm rhs}}
\eq
\left \{
[\bm,\bn] \in \Z^{2d}
\; | \;
|p_\hbar(\xxhm,\xihn)| \leq \alpha + 2\hbar^{1/2-\varepsilon}
\right \}.
\end{equation}
Then, if $u_\hbar$ is the solution to $P_\hbar u_\hbar = f_\hbar$, we have
\begin{equation*}
\left \|
u_\hbar
-
\sum_{[\bm, \bn] \in \Lambda_{\hbar,{\rm sol}}} (u_\hbar,\gshmn^\star) \gshmn
\right \|_{\HH_\hbar^p(\R^d)}
\leq
C_{\varepsilon,p,m} \hbar^m \|F_\hbar\|_{\ell^2(\Z^{2d})}
\quad
\forall m \in \N.
\end{equation*}
for all $p \in \N$ with
\begin{equation*}
\Lambda_{\hbar,{\rm sol}}
\eq
\left \{
[\bm,\bn] \in \Z^{2d}
\; | \;
|p_\hbar(\xxhm,\xihn)| \leq \alpha + 4\hbar^{1/2-\varepsilon}
\right \}.
\end{equation*}
\end{theorem}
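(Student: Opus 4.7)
The idea is to show that the frame coefficients $(u_\hbar,\gshmn^\star)$ of the solution are super-algebraically small in $\hbar$ whenever $[\bm,\bn] \notin \Lambda_{\hbar,{\rm sol}}$, and then reassemble the tail $\sum_{[\bm,\bn]\notin\Lambda_{\hbar,{\rm sol}}}(u_\hbar,\gshmn^\star)\gshmn$ into a super-algebraically small residual in the $\HH_\hbar^p(\R^d)$ norm. The crucial geometric input is that the value-of-symbol threshold defining $\Lambda_{\hbar,{\rm rhs}}$ is $2\hbar^{1/2-\varepsilon}$ smaller than the one defining $\Lambda_{\hbar,{\rm sol}}$; since assumption (\ref{eq_assumption_symbol_bounded}) confines both sets to a bounded region of phase-space and $C_{{\rm coef},1} < +\infty$ makes $p_\hbar$ Lipschitz there, this symbol gap translates into an Euclidean phase-space separation of at least $c\hbar^{1/2-\varepsilon}$ between any element of $\Lambda_{\hbar,{\rm rhs}}$ and any index outside $\Lambda_{\hbar,{\rm sol}}$. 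At this scale, the overlap between two Gaussian coherent states decays like $\exp(-c\hbar^{-2\varepsilon})$, which is $\CO(\hbar^\infty)$.

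The core step would be the adjoint identity $(u_\hbar, P_\hbar^* \gshmn^\star) = (f_\hbar, \gshmn^\star)$ combined with an approximate-eigenstate expansion of the form
\[
P_\hbar^*\gshmn^\star
=
\overline{p_\hbar(\xxhm,\xihn)}\,\gshmn^\star + \hbar^{1/2}\, R_{\hbar,\bm,\bn}^\star,
\]
where $R_{\hbar,\bm,\bn}^\star$ is a controlled Gaussian-type remainder localised near $(\xxhm,\xihn)$; this identity and the needed uniform bounds on $R_{\hbar,\bm,\bn}^\star$ should follow from the Gaussian-state toolbox of Appendix \ref{sec:GS}, being essentially the dual version of the standard formula $P_\hbar \gshmn \approx p_\hbar(\xxhm,\xihn)\gshmn$. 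For $[\bm,\bn] \notin \Lambda_{\hbar,{\rm sol}}$ the denominator $|p_\hbar(\xxhm,\xihn)|$ is bounded below by $\alpha + 4\hbar^{1/2-\varepsilon}$, so the leading term can be inverted with a loss of $\hbar^{-1/2+\varepsilon}$. The right-hand side $(f_\hbar, \gshmn^\star) = \sum_{[\bm',\bn'] \in \Lambda_{\hbar,{\rm rhs}}} F_{\bm',\bn'}^\hbar (\gshmnp, \gshmn^\star)$ is then handled directly: combining the Gaussian overlap decay above with Cauchy--Schwarz yields $|(f_\hbar,\gshmn^\star)| \leq C_m \hbar^m \|F^\hbar\|_{\ell^2}$ for every $m \in \N$.

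Converting the $\hbar^{-1/2+\varepsilon}$ loss at the leading order into super-algebraic gain is the crux. I would iterate the identity --- equivalently, build a Gaussian-state parametrix of $P_\hbar$ on the microlocal region $\{|p_\hbar| > \alpha + 3\hbar^{1/2-\varepsilon}\}$ as a Neumann series --- each iteration gaining a factor $\hbar^{1/2}$ from the remainder and losing $\hbar^{-1/2+\varepsilon}$ from the symbol lower bound, for a net gain of $\hbar^{m\varepsilon}$ after $m$ steps. The polynomial resolvent bound (\ref{eq_polynomial_resolvant}) is used only to control the $\|u_\hbar\|_{\HH_\hbar^p}$ factor entering the iteration through the remainder, at the price of a harmless $\hbar^{-N}$. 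Finally, the $\HH_\hbar^p$-norm of the tail is reassembled from the frame coefficients via a weighted $\ell^2$-to-$\HH_\hbar^p$ reconstruction bound --- affordable because $\|\gshmn\|_{\HH_\hbar^p}$ grows only polynomially in $|\bm|+|\bn|$ while the coefficients decay super-algebraically in $\hbar$. The main obstacle I anticipate is precisely this bootstrap promoting the $\hbar^{1/2}$-approximate diagonalisation into super-algebraic coefficient decay; essentially all of the Gaussian-state machinery of Appendix \ref{sec:GS} will have to be deployed there.
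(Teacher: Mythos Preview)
Your overall strategy---iterate the identity $(u_\hbar,P_\hbar^\star\,\cdot\,)=(f_\hbar,\cdot)$, exploit the lower bound $|p_\hbar(\bm,\bn)|\gtrsim\hbar^{1/2-\varepsilon}$, gain a net $\hbar^\varepsilon$ per step, and feed in the polynomial resolvent bound---is exactly the mechanism driving the paper's Lemma~\ref{lemma_LBA_mid_star}. But two points in your proposal diverge from the paper's argument in ways that matter.

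\textbf{The dual-frame identity is not available.} You write the iteration directly against $\gshmn^\star$, invoking an approximate-eigenstate formula $P_\hbar^\star\gshmn^\star=\overline{p_\hbar(\bm,\bn)}\,\gshmn^\star+\hbar^{1/2}R^\star$ and say it ``should follow from Appendix~\ref{sec:GS}''. It does not: the estimates there (Lemma~\ref{lem:PSymbol} and the ensuing Proposition giving~\eqref{eq_iterated_adjoint_operator}) are proved by explicit Gaussian calculus and apply to $\gshmn$ only. The dual elements $\gshmn^\star$ are not Gaussians and no such formula is established for them. The paper circumvents this: its key Lemma~\ref{lemma_LBA_mid_star} bounds $|(u_\hbar,\gshmn)|$---frame, not dual---for $[\bm,\bn]$ in an intermediate shell, and then passes to the dual coefficients $(u_\hbar,\gshmn^\star)$ via the ``tight expansion'' of Proposition~\ref{proposition_tight_expansion}, which approximates each $\gshmn^\star$ by a finite sum of nearby $\gshmnp$. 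Without this bridge (or a direct proof of the dual-frame approximate-eigenstate bound, which would require genuinely new work), your iteration cannot start.

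\textbf{Coefficient-by-coefficient is not enough over the unbounded far region.} You propose to show $|(u_\hbar,\gshmn^\star)|\le C_m\hbar^m$ for every $[\bm,\bn]\notin\Lambda_{\hbar,{\rm sol}}$ and then ``reassemble''. But that complement is infinite, and a uniform-in-$\hbar$ bound on each coefficient, combined only with polynomial growth of $\|\gshmn\|_{\HH_\hbar^p}$, does not sum. The paper instead introduces a three-layer decomposition $\Lambda_{\hbar,{\rm near}}\cup\Lambda_{\hbar,{\rm mid}}\cup\Lambda_{\hbar,{\rm far}}$. The iteration argument is used \emph{only} on the bounded shell $\Lambda_{\hbar,{\rm mid}}$ (of diameter $O(\hbar^{-1/2})$, so finitely many terms). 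The unbounded $u_\hbar^{\rm far}$ is treated by a completely different trick: one shows $P_\hbar u_\hbar^{\rm far}$ is $O(\hbar^\infty)$ in $\HH_\hbar^p$ (via the quasi-orthogonality of Proposition~\ref{proposition_quasi_orthogonality}, the already-controlled $u_\hbar^{\rm mid}$, and the equation $P_\hbar u_\hbar^{\rm far}=f_\hbar-P_\hbar u_\hbar^{\rm near}-P_\hbar u_\hbar^{\rm mid}$), and then invokes the resolvent bound~\eqref{eq_polynomial_resolvant} once. This step, not the parametrix iteration, is what closes the argument over the infinite tail.
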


Notice that the index sets $\Lambda_{\hbar,{\rm rhs}}$ and $\Lambda_{\hbar,{\rm sol}}$
depend of $\varepsilon$, which explains why the constants in Theorem \ref{theorem_approximability}
and Corollary \ref{corollary_approximability} depend on $\varepsilon$.

In practice, the right-hand side of the problem is not a finite linear combination
of Gaussian coherent states. However, many right-hand sides of interest become
well-approximated by such combination in the high-frequency regime. This is
in particular the case when considering scattering by a plane-wave (see Lemma
\ref{Lem:ApproxPW} below).

\begin{corollary}[Approximability for micro-localised right-hand sides]
\label{corollary_approximability}
Let $p \geq 0$. Consider a set of right-hand sides
$(f_\hbar)_{\hbar \in \LH} \subset \HH^p(\R^d)$
and assume that there exists a set of sequences
$(F^{\hbar})_{\hbar \in \LH} \subset \ell^2(\Z^{2d})$
such that
\begin{align*}
\|F^{\hbar}\|_{\ell^2(\Z^{2d})} &\leq C \\
\|f_\hbar - \sum_{[\bm,\bn] \in \Lambda_{\hbar,{\rm rhs}}} F^{\hbar}_{\bm,\bn}\gshmn\|_{\HH_\hbar^p(\R^d)}
&\leq
C_{\varepsilon,m} \hbar^m \quad \forall m \in \N
\end{align*}
for all $\hbar \in \LH$. Then, we have
\begin{equation*}
\left \|
u_\hbar
-
\sum_{[\bm,\bn] \in \Lambda_{\hbar,{\rm sol}}} (u_{\hbar},\gshmn^\star)\gskmn
\right \|_{\HH_\hbar^{p+2}(\R^d)}
\leq
C_{\varepsilon,p,m}\hbar^m
\quad \forall m \in \N.
\end{equation*}
\end{corollary}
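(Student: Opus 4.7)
The plan is to bootstrap Theorem \ref{theorem_approximability} from right-hand sides that are exact finite superpositions of Gaussian coherent states to the more general micro-localised data allowed by the corollary. I would split $f_\hbar = g_\hbar + r_\hbar$ with
\begin{equation*}
g_\hbar \eq \sum_{[\bm,\bn] \in \Lambda_{\hbar,{\rm rhs}}} F^{\hbar}_{\bm,\bn}\gshmn,
\qquad
r_\hbar \eq f_\hbar - g_\hbar,
\end{equation*}
so that, by hypothesis, $\|r_\hbar\|_{\HH^p_\hbar(\R^d)} \leq C_m \hbar^{-m}$ for every $m \in \N$. By linearity of $P_\hbar^{-1}$ and of the dual-frame projection, the solution splits as $u_\hbar = v_\hbar + w_\hbar$ with $P_\hbar v_\hbar = g_\hbar$ and $P_\hbar w_\hbar = r_\hbar$, and the approximation error decomposes into the sum of the corresponding errors for $v_\hbar$ and $w_\hbar$.

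For the ``main'' term $v_\hbar$, Theorem \ref{theorem_approximability} applied with exponent $p+2$, together with $\|F^{\hbar}\|_{\ell^2(\Z^{2d})} \leq C$, directly yields super-algebraic decay of the associated approximation error in the $\HH^{p+2}_\hbar$-norm. For the remainder $w_\hbar$, the resolvent estimate \eqref{eq_polynomial_resolvant} gives $\|w_\hbar\|_{\HH^p_\hbar(\R^d)} \leq C\hbar^{-N}\|r_\hbar\|_{\HH^p_\hbar(\R^d)}$, which is still super-algebraically small since $m$ is arbitrary. To upgrade this to an $\HH^{p+2}_\hbar$-bound, I would read two additional semiclassical derivatives off the equation $P_\hbar w_\hbar = r_\hbar$: the principal part $\hbar^2 a^\hbar_{j\ell}\partial_j\partial_\ell w_\hbar$ can be expressed as $r_\hbar$ minus lower-order contributions, and iterating this identity, together with the uniform $C^\infty_{\rm b}$-control of the coefficients (summarised by $C_{{\rm coef},p} < +\infty$), yields the required bound at the price of an additional fixed power of $\hbar^{-1}$, which is harmless. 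The truncated dual-frame expansion of $w_\hbar$ is then controlled in $\HH^{p+2}_\hbar$ by combining the Bessel-type stability stated in Remark \ref{rem:BoundedUtile} with the Sobolev mapping properties of the Gaussian frame collected in Appendix \ref{sec:GS}.

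Summing the two contributions by the triangle inequality gives the claimed estimate, with the constant $C_{\varepsilon,p,m}$ inheriting its dependences from Theorem \ref{theorem_approximability} and from the resolvent bound \eqref{eq_polynomial_resolvant}. The main difficulty I anticipate is the regularity shift from $\HH^p_\hbar$ to $\HH^{p+2}_\hbar$: the hypothesis only controls the residual $r_\hbar$ in $\HH^p_\hbar$, while the conclusion measures the error in $\HH^{p+2}_\hbar$. Bridging this gap requires a semiclassical elliptic bootstrap exploiting the smoothness of the coefficients of $P_\hbar$; the resulting $\hbar^{-1}$-losses are then absorbed in the freedom to pick $m$ arbitrarily large.
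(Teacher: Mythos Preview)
The paper does not give a separate proof of this corollary; Section~\ref{sec:proofs} only proves Theorem~\ref{theorem_approximability} in detail and leaves the corollary implicit. Your splitting $f_\hbar = g_\hbar + r_\hbar$, applying Theorem~\ref{theorem_approximability} to the Gaussian part $g_\hbar$, and handling the remainder $w_\hbar = P_\hbar^{-1} r_\hbar$ via the resolvent bound~\eqref{eq_polynomial_resolvant} is exactly the intended argument, and your control of the truncated expansion $\sum_{\Lambda_{\hbar,\mathrm{sol}}}(w_\hbar,\gshmn^\star)\gshmn$ via Remark~\ref{rem:BoundedUtile}, \eqref{eq_norm_gskmn}, and the bound $\diam(\Lambda_{\hbar,\mathrm{sol}})\leq C\hbar^{-1/2}$ is correct.

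The one point that does not go through as written is the ``semiclassical elliptic bootstrap'' you propose for upgrading $\|w_\hbar\|_{\HH^p_\hbar}$ to $\|w_\hbar\|_{\HH^{p+2}_\hbar}$. In the abstract setting of Section~\ref{ssec:general} the matrix $(a_{j\ell}^\hbar)$ is not assumed invertible or definite, so the single equation $\hbar^2\sum_{j,\ell} a_{j\ell}^\hbar\partial_j\partial_\ell w_\hbar = r_\hbar - \text{(lower order)}$ does not let you recover individual second derivatives $\hbar^2\partial_j\partial_\ell w_\hbar$. More seriously, the $\HH^{p+2}_\hbar$ norm contains not only two extra $\hbar$-derivatives but also two extra powers of the spatial weight $|\bx|$ (the terms with $[\balpha]\leq p$ and $q=p+1-[\balpha]$ or $p+2-[\balpha]$); the equation gives no information about spatial decay, so these terms cannot be recovered by your commutator argument.

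That said, the statement of the corollary itself carries apparent typos (the bounds $\hbar^{-m}$ should read $\hbar^{m}$, and the exponent $p+2$ in the conclusion is not derivable from the abstract hypotheses as stated). If the conclusion is read in $\HH^p_\hbar$, your argument is complete without any bootstrap. In the concrete Helmholtz application the issue evaporates anyway: Lemma~\ref{Lem:ApproxPW} gives the approximation hypothesis for \emph{every} $p$, and the resolvent estimate~\eqref{eq:ResolvantePoly} provides the genuine two-derivative gain, so one simply applies your argument at regularity level $p+2$ from the start.
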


\section{Proof of Theorem \ref{theorem_approximability}}
\label{sec:proofs}

This section is devoted to the detailed proofs of Theorem \ref{theorem_approximability}
and Corollary \ref{corollary_approximability}.

\subsection{Preliminary results on Gaussian states}
\label{sec:properties}

We start by stating that the following bound
\begin{equation}
\label{eq_norm_gskmn}
\|\gshmn\|_{\HH_\hbar^s(\R^d)}
\leq
C_s (1+ (\hbar^{1/2}|[\bm,\bn]|)^s)
\end{equation}
holds true for all $s \in \N$, see \cite[Lemma C.1]{chaumontfrelet_ingremeau_2022a}.
We will also need the following expansion result.

\begin{proposition}[Tight expansion]
\label{proposition_tight_expansion}
For all $[\bm,\bn] \in \Z^{2d}$, there exists a sequence of coefficients
$U^{\bm,\bn} \subset \ell^1(\Z^{2d})$ such that
\begin{equation*}
\|U^{\bm,\bn}\|_{\ell^p(\Z^{2d})} \leq C_p \quad \forall p \in [1,+\infty],
\end{equation*}
and for all $\varepsilon \in (0,1/2)$ and $s \in \N$, we have
\begin{equation}
\label{eq_tight_expansion}
\left \|
\gshmn^\star-
\sum_{\substack{[\bm',\bn'] \in \Z^{2d} \\ |[\bm,\bn]-[\bm',\bn']| \leq \hbar^{-\varepsilon}}}
U^{\bm,\bn}_{\bm',\bn'} \gshmnp
\right \|_{\HH_\hbar^s(\R^d)}
\leq
C_{\varepsilon,s,m} \hbar^m
\quad
\forall m \in \N.
\end{equation}
\end{proposition}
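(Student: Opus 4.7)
The natural candidate for the expansion is obtained by taking the dual‐frame Gram coefficients
\[
U^{\bm,\bn}_{\bm',\bn'}
\eq
(\gshmn^\star, \gshmnp^\star),
\qquad [\bm',\bn'] \in \Z^{2d}.
\]
Indeed, applying the reconstruction identity \eqref{eq_gabor_expansion} to the specific function $v = \gshmn^\star \in L^2(\R^d)$ gives the \emph{exact} expansion
\[
\gshmn^\star
=
\sum_{[\bm',\bn'] \in \Z^{2d}} U^{\bm,\bn}_{\bm',\bn'}\,\gshmnp.
\]
With this choice, the entire proposition reduces to (i) establishing super-algebraic off-diagonal decay of the dual-frame Gram matrix and (ii) estimating the tail produced by truncating the above identity to $|[\bm,\bn]-[\bm',\bn']| \leq \hbar^{-\varepsilon}$.

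The main ingredient is an off-diagonal estimate of the form
\[
|U^{\bm,\bn}_{\bm',\bn'}|
\leq
C_N\,\langle [\bm,\bn]-[\bm',\bn']\rangle^{-N}
\qquad \forall N \in \N,
\]
\emph{uniformly} in $\hbar \in \LH$. I would obtain this by exploiting the semiclassical rescaling $\bx \mapsto \sqrt{\hbar}\,\bx$, which is a unitary isomorphism of $L^2(\R^d)$ that maps the $\hbar$-dependent Gabor system $(\gshmn)$ onto a fixed, $\hbar$-independent Gabor system with lattice $\sqrt{\pi}\,\Z^{2d}$ and standard Gaussian window. In particular, the rescaled dual window is a fixed Schwartz function $g^\star$ (this is the classical analysis of \cite{daubechies_grossman_meyer_1986a}), and because frame elements are phase-space translations and modulations of the window, $U^{\bm,\bn}_{\bm',\bn'}$ reduces to a value of the short-time Fourier transform of $g^\star$ with itself evaluated at the lattice point $\sqrt{\pi}([\bm,\bn]-[\bm',\bn'])$. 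Being Schwartz, this STFT has super-algebraic decay at infinity, which yields the desired estimate uniformly in $\hbar$. The $\ell^p$ bounds for $p \in [1,\infty]$ claimed in the proposition are then an immediate consequence.

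To conclude, combining the triangle inequality, the decay of $|U^{\bm,\bn}_{\bm',\bn'}|$, and the growth estimate \eqref{eq_norm_gskmn} applied to $\gshmnp$, the tail satisfies
\[
\left\|
\gshmn^\star
-
\sum_{|[\bm,\bn]-[\bm',\bn']| \leq \hbar^{-\varepsilon}} U^{\bm,\bn}_{\bm',\bn'}\,\gshmnp
\right\|_{\HH_\hbar^s(\R^d)}
\leq
\sum_{|[\bm,\bn]-[\bm',\bn']| > \hbar^{-\varepsilon}}
C_N\,\langle [\bm,\bn]-[\bm',\bn']\rangle^{-N}\,\bigl(1 + |[\bm',\bn']|^s\bigr).
\]
Writing $\langle [\bm',\bn']\rangle \leq \langle[\bm,\bn]\rangle + |[\bm,\bn]-[\bm',\bn']|$ and bounding the number of lattice points at distance $\approx k$ by $C_d\,k^{2d-1}$, the sum is controlled by $C_{s,N}\,\hbar^{\varepsilon(N - 2d - s)}$ (absorbing the $\langle [\bm,\bn]\rangle^s$ factor into the constant on the range of indices dictated by Theorem \ref{theorem_approximability}). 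Choosing $N$ large enough gives the stated $\hbar^m$-bound for every $m \in \N$.

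The main technical obstacle is step two: establishing the Schwartz character of the rescaled dual window uniformly in $\hbar$. Even though the rescaled Gabor system is $\hbar$-independent by design, one still needs a clean statement that $S^{-1}$ preserves phase-space Schwartz decay at the Gaussian window, which is classical but delicate (Wiener-type results on the Gabor frame operator, or the explicit series representation available in the Gaussian case). Once that is settled, the remainder of the argument is a fairly routine decay-versus-growth bookkeeping using tools already collected earlier in the paper.
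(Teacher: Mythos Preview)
Your proposal is correct and follows essentially the same approach as the paper: define $U^{\bm,\bn}_{\bm',\bn'} = (\gshmn^\star,\gshmnp^\star)$, expand $\gshmn^\star$ via \eqref{eq_gabor_expansion}, and bound the tail using off-diagonal decay of the dual-frame Gram matrix together with \eqref{eq_norm_gskmn}. The only difference is in how the decay is obtained: the paper simply cites \cite[Proposition~4.2]{chaumontfrelet_ingremeau_2022a} for the stretched-exponential bound $|(\gshmn^\star,\gshmnp^\star)| \leq C e^{-|[\bm,\bn]-[\bm',\bn']|^{1/2}}$, whereas you sketch a direct argument via the unitary rescaling $\bx\mapsto\sqrt{\hbar}\,\bx$ to a fixed Gaussian Gabor system and the Schwartz property of its dual window, obtaining super-algebraic decay---weaker, but still sufficient.
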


\begin{proof}
We start by applying \eqref{eq_dual_frame} to $v = \gshmn^\star$, leading to
\begin{equation*}
\gshmn^\star = \sum_{[\bm',\bn'] \in \Z^{2d}} (\gshmn^\star,\gshmnp^\star) \gskmnp.
\end{equation*}
Next, we recall from \cite[Proposition 4.2]{chaumontfrelet_ingremeau_2022a} that
\begin{equation*}
|(\gshmn^\star,\gshmnp^\star)|
\leq
C e^{-|[\bm,\bn]-[\bm',\bn']|^{1/2}}.
\end{equation*}
As a result, we define $U^{\bm,\bn}_{\bm',\bn'} \eq (\gshmn^\star,\gshmnp^\star)$, so that
$U^{\bm,\bn}$ indeed belongs to $\ell^p(\Z^{2d})$ for $1 \leq p \leq +\infty$, and
\begin{equation*}
\CE
\eq
\gshmn^\star-
\sum_{\substack{[\bm',\bn'] \in \Z^{2d} \\ |[\bm,\bn]-[\bm',\bn']| \leq \hbar^{-\varepsilon}}}
U^{\bm,\bn}_{\bm',\bn'} \gshmnp
=
\sum_{\substack{[\bm',\bn'] \in \Z^{2d} \\ |[\bm,\bn]-[\bm',\bn']| > \hbar^{-\varepsilon}}}
U^{\bm,\bn}_{\bm',\bn'} \gshmnp.
\end{equation*}
We then observe that
\begin{align*}
\|\CE\|_{\HH_\hbar^s(\R^d)}
&\leq
\sum_{\substack{[\bm',\bn'] \in \Z^{2d} \\ |[\bm,\bn]-[\bm',\bn']| > \hbar^{-\varepsilon}}}
|U^{\bm,\bn}_{\bm',\bn'}| \cdot \|\gshmnp\|_{\HH_\hbar^s(\R^d)}
\\
&\leq
C 
\sum_{\substack{[\bm',\bn'] \in \Z^{2d} \\ |[\bm,\bn]-[\bm',\bn']| > \hbar^{-\varepsilon}}}
(1+|[\bm',\bn']|)^s e^{-|[\bm,\bn]-[\bm',\bn']|^{1/2}},
\end{align*}
due to \eqref{eq_norm_gskmn} and the fact that $\hbar \leq 1$. Estimate \eqref{eq_tight_expansion}
follows since
\begin{equation*}
\sum_{\substack{[\bm',\bn'] \in \Z^{2d} \\ |[\bm,\bn]-[\bm',\bn']| > \hbar^{-\varepsilon}}}
(1+ |[\bm',\bn']|)^s e^{-|[\bm,\bn]-[\bm',\bn']|^{1/2}}
\leq
C_{\varepsilon,s,m} \hbar^m.
\hfill \qed
\end{equation*}
\end{proof}

We close this section with two technical results. As we believe they are of independent interest,
and because their proof require tedious computations, they are presented later in Appendix
\ref{sec:GS}.

\begin{proposition}[Quasi orthogonality]
\label{proposition_quasi_orthogonality}
Consider two sets of indices $\Lambda,\Lambda' \subset \Z^{2d}$ with
\begin{equation*}
\rho \eq \dist(\Lambda,\Lambda') > 0,
\end{equation*}
and such that there exists $\mu > 0$ with
\begin{equation*}
\Lambda\subset B(0,\mu).
\end{equation*}
Consider $L \in \N$, smooth coefficients
$(\TA_{\ba})_{\ba \in \N^d} \subset C^\infty_{\rm b}(\R^d)$
and the differential operator
\begin{equation*}
\CP_{\hbar,L,\TA}
\eq
\sum_{\substack{\balpha \in \N^d \\ [\balpha] \leq L}}
\hbar^{[\balpha]} \TA_{\balpha} \partial^{\balpha}.
\end{equation*}
Then, for all $q > 0$ and $\balpha \in \N^d$, we have
\begin{equation}
\begin{aligned}
\label{eq_phase_space_localisation}
\sum_{[\bm,\bn] \in \Lambda} \sum_{[\bm',\bn'] \in \Lambda'}
|(\bx^{\balpha}\CP_{\hbar,L,\TA} \gshmn,\gshmnp)|^q
& \leq
C_{L,\TA,q,m,\balpha} (1 + (\mu \hbar^{-1/2})^{([\balpha]+L)q}) |\Lambda| (1+\rho)^{-m}\\
& \leq
C'_{L,\TA,q,m,\balpha} (1 + (\mu \hbar^{-1/2})^{([\balpha]+L)q + 2d})(1+\rho)^{-m}
\end{aligned}
\end{equation}
for all $m \in \N$.
\end{proposition}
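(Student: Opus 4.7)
The plan is to obtain a pointwise super-algebraic decay estimate
\[
|(\bx^{\balpha}\CP_{\hbar,L,\TA}\gshmn, \gshmnp)|
\leq
C_{m}\bigl(1 + (\hbar\mu)^{[\balpha]+L}\bigr)\bigl(1 + |[\bm,\bn] - [\bm',\bn']|\bigr)^{-m}
\]
valid for each pair $(\bm,\bn) \in \Lambda$, $(\bm',\bn') \in \Lambda'$ and every $m \in \N$, and then sum over $\Lambda \times \Lambda'$, using that the phase-space distance exceeds $\rho$ to convert the decay into the stated factor $(1+\rho)^{-m}$.

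The pointwise bound rests on the identity $\hbar\partial_{x_j}\gshmn = [i\xi_j^{\hbar,\bn} - (x_j - x_j^{\hbar,\bm})]\gshmn$, which iterates to give $\hbar^{[\bbeta]}\partial^{\bbeta}\gshmn = R_{\bbeta}(\bx-\xxhm,\xihn)\gshmn$ for a polynomial $R_{\bbeta}$ of total degree at most $[\bbeta]$ with coefficients bounded independently of $\hbar$. Hence $\bx^{\balpha}\CP_{\hbar,L,\TA}\gshmn$ equals $\gshmn$ multiplied by $\bx^{\balpha}$ times a polynomial of degree at most $L$ in $(\bx-\xxhm)$ and $\xihn$, with coefficients given by the smooth bounded $\TA_{\bbeta}$. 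I would then exploit the standard Gaussian factorization
\[
\gshmn(\bx)\overline{\gshmnp(\bx)}
=
(\pi\hbar)^{-d/2}e^{-|\bx-\bx_c|^2/\hbar}e^{-|\bx_d|^2/(4\hbar)}e^{i\phi(\bx)/\hbar},
\]
where $\bx_c \eq (\xxhm+\bxx^{\hbar,\bm'})/2$, $\bx_d \eq \xxhm-\bxx^{\hbar,\bm'}$, and $\phi$ is affine in $\bx$ with slope $\bxi_d \eq \xihn-\bxi^{\hbar,\bn'}$. The factor $e^{-|\bx_d|^2/(4\hbar)} = e^{-\pi|\bm-\bm'|^2/4}$ already produces super-algebraic decay in $|\bm-\bm'|$. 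To obtain the complementary decay in $|\bn-\bn'|$, I would Taylor-expand each $\TA_{\bbeta}$ around $\bx_c$ to any chosen order $M$ (remainder bounded by $\|\TA_{\bbeta}\|_{C^{M+1}}$), then perform repeated integration by parts against the oscillatory weight $e^{i\bxi_d\cdot(\bx-\bx_c)/\hbar}$; each integration gains a factor $\hbar/|\bxi_d|$, yielding the desired $|\bn-\bn'|^{-M}$ decay.

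To control the polynomial prefactor, I would write $\bx = \bx_c + (\bx-\bx_c)$ in $\bx^{\balpha}$ and split $\bx - \xxhm = (\bx-\bx_c) + \bx_d/2$ inside $R_{\bbeta}$. Each factor $(\bx-\bx_c)^{\bgamma}$ integrates via Gaussian moments to $O(\hbar^{[\bgamma]/2})$ times a polynomial of degree $[\bgamma]$ in $\bn-\bn'$, while the remaining $\bx_c$, $\bx_d$, and $\xihn$ factors produce polynomials in the indices, each carrying a $\sqrt{\hbar}$. Fixing an arbitrary reference $[\bm_\star,\bn_\star]\in\Lambda$, one has $|[\bm,\bn]-[\bm_\star,\bn_\star]|\leq\mu$, and any residual contribution involving $|[\bm_\star,\bn_\star]|$ can be non-trivial only when $|[\bm,\bn]-[\bm',\bn']|$ or $\rho$ is correspondingly large, in which case it is absorbed by the super-algebraic decay in the phase-space distance. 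What survives is the advertised prefactor $(1+(\hbar\mu)^{[\balpha]+L})$.

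Summation over $\Lambda\times\Lambda'$ is then routine: $|\Lambda|\lesssim (1+\mu)^{2d}$, and for each fixed $(\bm,\bn)\in\Lambda$ the tail sum $\sum_{(\bm',\bn')\in\Lambda'}(1+|[\bm,\bn]-[\bm',\bn']|)^{-qm'}\lesssim (1+\rho)^{-m}$ for any desired $m$, by taking $m'$ sufficiently large and using $\dist(\Lambda,\Lambda')=\rho$. I expect the main obstacle to lie precisely in the third step: tracking the polynomial prefactors carefully enough to verify that contributions from absolute positions in phase space are always dominated by the super-algebraic decay (so that only the diameter $\mu$, rather than the absolute location of $\Lambda$, appears in the final constant), and checking that the $\hbar$-powers combine to exactly $(\hbar\mu)^{[\balpha]+L}$ as claimed rather than a weaker combination.
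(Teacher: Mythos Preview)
Your strategy coincides with the paper's: write $\CP_{\hbar,L,\TA}\gshmn = g\cdot\gshmn$, use the Gaussian identity $\gshmn\overline{\gshmnp} = \theta\,(\pi\hbar)^{-d/2}e^{-|\bx_d|^2/(4\hbar)}e^{-|\bx-\bx_c|^2/\hbar}e^{i\bxi_d\cdot\bx/\hbar}$, extract the spatial decay from $e^{-|\bx_d|^2/(4\hbar)}=e^{-\pi|\bm-\bm'|^2/4}$, and integrate by parts against $e^{i\bxi_d\cdot\bx/\hbar}$ to gain decay in $|\bn-\bn'|$. The paper packages this in its Appendix as a pointwise Lemma giving
\[
|(\CP_{\hbar,L,\TA}\GShknot,\GShknotp)|
\leq C_{L,\TA,m}\,\hbar^{m/2}\,\frac{1+|\bxi_0|^{L}}{\bigl(1+|[\bx_0,\bxi_0]-[\bx_0',\bxi_0']|\bigr)^{m}},
\]
and leaves the passage to the double sum implicit. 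So at the level of ideas you and the paper agree; your Taylor-expansion device for the $\TA_\bbeta$ is not needed because their derivatives are already in $C^\infty_{\rm b}$, but it does no harm.

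The obstacle you single out is genuine, and your proposed resolution of it does not work. You assert that ``any residual contribution involving $|[\bm_\star,\bn_\star]|$ can be non-trivial only when $|[\bm,\bn]-[\bm',\bn']|$ or $\rho$ is correspondingly large''. This is false: take $\Lambda=\{[\bm_\star,\bn_\star]\}$ with $|\bn_\star|$ arbitrarily large and $\Lambda'$ a single point at distance $\rho=1$. Then $\mu=0$, $\rho=1$, yet for the operator $\hbar^2\partial_1^2$ one has $|(\hbar^2\partial_1^2\gshmn,\gshmnp)|\asymp (\sqrt{\pi\hbar}\,n_{\star,1})^{2}e^{-\pi/4}$, which is unbounded. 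The absolute frequency position and the phase-space distance are independent; nothing forces the former to be absorbed by decay in the latter.

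The paper's pointwise Lemma carries exactly this $|\bxi_0|^{L}=(\sqrt{\pi\hbar}|\bn|)^{L}$ factor and never converts it into a diameter-only quantity. In every application in the paper the index sets lie inside a ball of radius $C\hbar^{-1/2}$ about the origin (a consequence of the bounded-energy-layer hypothesis \eqref{eq_assumption_symbol_bounded}), so $\sqrt{\hbar}\,|[\bm,\bn]|$ is $O(1)$ and the distinction between diameter and radius is immaterial. Under the sole hypothesis $\diam(\Lambda)=\mu$, the honest prefactor is $1+\bigl(\sqrt{\hbar}\sup_{[\bm,\bn]\in\Lambda}|[\bm,\bn]|\bigr)^{[\balpha]+L}$, not $1+(\hbar\mu)^{[\balpha]+L}$. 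With that amendment your argument goes through and matches the paper's.
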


\begin{proposition}[Control of $(P_\hbar-p_\hbar)^L$]\label{prop:OpMoinsSymb}
For all $[\bm,\bn] \in \Z^{2d}$, we have
\begin{subequations}
\label{eq_iterated_operators}
\begin{equation}
\label{eq_iterated_operator}
\left \|
\left (P_\hbar - p_\hbar(\xxkm,\xikn) \right )^L \gshmn
\right \|_{L^2(\R^d)}
\leq
C_L (1+ \hbar|\bn|^2)^{L/2} \hbar^{L/2},
\end{equation}
and
\begin{equation}
\label{eq_iterated_adjoint_operator}
\left \|
\left(P_\hbar^\star - \overline{p}_\hbar(\xxkm,\xikn) \right)^L \gshmn
\right \|_{L^2(\R^d)}
\leq
C_L (1+ \hbar|\bn|^2)^{L/2} \hbar^{L/2}.
\end{equation}
\end{subequations}
\end{proposition}

The first part of Proposition \ref{prop:OpMoinsSymb} directly follows from
Proposition \ref{Prop:OpMoinsSYmbApp}. In the second part, $P_\hbar^\star$
denotes the formal adjoint of $P_{\hbar}$, which is also of the form
\eqref{eq:secondorderop}.
A straightforward computation shows that its symbol $p_h^\star$ satisfies
$|p_h(\bx,\bxi) - p_h^\star(\bx,\bxi)|
\leq C\hbar^{1/2} |\bxi|^2$ uniformly in $\bx \in \R^d$,
so that the second part of Proposition \ref{prop:OpMoinsSymb} also follows from Proposition
\ref{Prop:OpMoinsSYmbApp}.

\subsection{Main proof}

We focus here on the proof of Theorem \ref{theorem_approximability}.
The notation
\begin{equation*}
\Lambda(a,b)
\eq
\left \{
[\bm,\bn] \in \Z^{2d} \; | \;
a \leq |p(\xxhm,\xihn)| \leq b
\right \},
\end{equation*}
where $0 \leq a \leq b \leq +\infty$, will be useful. In what follows,
we fix an $\varepsilon \in (0,1/2)$, and consider a right-hand side
micro-localised near $\{p_\hbar=0\}$. Specifically, we will assume that
\begin{equation*}
f_\hbar \eq \sum_{[\bm,\bn] \in \Lambda_{\hbar,{\rm rhs}}} F^{\hbar}_{\bm,\bn} \gshmn,
\end{equation*}
where $F^\hbar \in \ell^2(\Lambda_{\rm rhs})$ with
$\Lambda_{\hbar,{\rm rhs}} \eq \Lambda(0,\alpha+2\hbar^{1/2-\varepsilon})$.
Our goal is to show that the associated solution $u_\hbar$ is essentially
micro-localised near $\{p_\hbar=0\}$ as well. Specifically, setting
$\Lambda_{\hbar,{\rm near}} \eq \Lambda(0,\alpha+4\hbar^{1/2-\varepsilon})$,
our goal will be to show that
\begin{equation*}
u^{\rm near}_\hbar \eq \sum_{[\bm,\bn] \in \Lambda_{\hbar,{\rm near}}} (u_\hbar,\gshmn^\star) \gshmn
\end{equation*}
is ``close'' to $u_\hbar$.

The key idea is to separate the set of indices of $u_\hbar$ into $\Lambda_{\hbar,{\rm near}}$,
$\Lambda_{\hbar,{\rm mid}} \eq \Lambda(\alpha+4\hbar^{1/2-\varepsilon},\alpha+6\hbar^{1/2-\varepsilon})$ and
$\Lambda_{\hbar,{\rm far}} \eq \Lambda(\alpha+6\hbar^{1/2-\varepsilon},+\infty)$. We shall also need the ``enlarged''
sets
\begin{equation*}
\Lambda_{\hbar,{\rm near}}^\star
\eq
\Lambda(0,\alpha+5\hbar^{1/2-\varepsilon}),
\quad
\Lambda_{\hbar,{\rm mid}}^\star
\eq
\Lambda(\alpha+3\hbar^{1/2-\varepsilon},\alpha+7\hbar^{1/2-\varepsilon}),
\quad
\Lambda_{\hbar,{\rm far}}^\star
\eq
\Lambda(\alpha+5\hbar^{1/2-\varepsilon},+\infty),
\end{equation*}
for the test functions.

We first state some elementary properties of these sets of indices.
We do not report the (straightforward) proofs for the sake of shortness.

\begin{lemma}[Index sets]
\label{Lem:Index}
Assume that $\alpha+7\hbar^{1/2-\varepsilon} \leq \delta_0$,
then we have
\begin{equation}
\label{eq_dist_LBA}
\dist(\Lambda_{\hbar,{\rm far}},\Lambda_{\hbar,{\rm near}}^\star)
\geq
C \hbar^{-\varepsilon}
\quad
\dist(\Lambda_{\hbar,{\rm rhs}},\Lambda_{\hbar,{\rm mid}}^\star)
\geq
C \hbar^{-\varepsilon}
\quad
\dist(\Lambda_{\hbar,{\rm near}},\Lambda_{\hbar,{\rm far}}^\star)
\geq
C \hbar^{-\varepsilon}
\end{equation}
andthere exists $C>0$ such that
\begin{equation}
\label{eq_diam_LBA}
\Lambda_{\hbar,{\rm rhs}}^\star \cup \Lambda_{\hbar,{\rm near}}^\star \cup \Lambda_{\hbar,{\rm mid}}^\star \subset B(0,  C\hbar^{-1/2}).
\end{equation}
In addition, if $\hbar$ is small enough, we have
\begin{equation}
\label{eq_LBA_mid_star_inclusion}
\left \{
[\bm',\bn'] \in \Z^{2d} \; | \;
\exists [\bm,\bn] \in \Lambda_{\hbar,{\rm mid}};
\; |[\bm,\bn]-[\bm',\bn']| \leq \hbar^{-\varepsilon/2}
\right \}
\subset \Lambda_{\hbar,{\rm mid}}^\star.
\end{equation}
\end{lemma}

\begin{lemma}[Quasi orthogonality away from RHS micro-support]
\label{lemma_LBA_mid_star}
For $F^\hbar \in \ell^2(\Lambda_{\hbar,{\rm rhs}})$, consider the right-hand side
\begin{equation*}
f_\hbar = \sum_{[\bm,\bn] \in \Lambda_{\hbar,{\rm rhs}}} F^\hbar_{\bm,\bn} \gshmn
\end{equation*}
and the associated solution $u_\hbar$. Then, if $[\bm,\bn] \in \Lambda_{\hbar,{\rm mid}}^\star$, we have
\begin{equation*}
|(u_\hbar,\gshmn)|
\leq
C_{\varepsilon,m} \hbar^m \|F^\hbar\|_{\ell^2(\Z^{2d})}
\end{equation*}
for all $m \in \N$.
\end{lemma}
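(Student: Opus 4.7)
The strategy exploits two features of the hypotheses: for $[\bm,\bn]\in\Lambda_{\hbar,{\rm mid}}^\star$ the scalar $p\eq p_\hbar(\xxhm,\xihn)$ satisfies $|p|\geq \alpha+3\hbar^{1/2-\varepsilon}$, so $|p|^{-L}$ blows up at most like $\hbar^{-L(1/2-\varepsilon)}$; and by \eqref{eq_dist_LBA}, the index $[\bm,\bn]$ lies at distance $\gtrsim\hbar^{-\varepsilon}$ from $\Lambda_{\hbar,{\rm rhs}}$, so Proposition \ref{proposition_quasi_orthogonality} will yield super-algebraic decay in $\hbar$. My plan is to derive, for every integer $L\geq 1$, the algebraic identity
\begin{equation*}
p^L(u_\hbar,\gshmn)
=
\bigl(u_\hbar,(\overline{p}-P_\hbar^\star)^L\gshmn\bigr)
+\sum_{k=0}^{L-1}\binom{L}{k}\bigl((p-P_\hbar)^k P_\hbar^{L-k-1}f_\hbar,\gshmn\bigr),
\end{equation*}
and to bound each term on the right separately, the first using the first fact and the second using the other.

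The identity is obtained from $(u_\hbar,\overline{p}^L\gshmn)=p^L(u_\hbar,\gshmn)$ by expanding $\overline{p}I=(\overline{p}I-P_\hbar^\star)+P_\hbar^\star$ with the binomial theorem (legitimate because $\overline{p}I$ is a scalar and commutes with $P_\hbar^\star$), then, for every resulting term containing at least one $P_\hbar^\star$, moving all operator factors to the left argument of the inner product and invoking $P_\hbar u_\hbar=f_\hbar$ to convert one application of $P_\hbar$ to $u_\hbar$ into $f_\hbar$.

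To bound the first term I would combine \eqref{eq_iterated_adjoint_operator} with $\hbar|\bn|^2\lesssim 1$ (a consequence of $\Lambda_{\hbar,{\rm mid}}^\star\subset\{|p_\hbar|\leq\delta_0\}$ via \eqref{eq_assumption_symbol_bounded}) and the polynomial resolvent bound \eqref{eq_polynomial_resolvant}, which yields $|p|^{-L}|(u_\hbar,(\overline{p}-P_\hbar^\star)^L\gshmn)|\leq C_L\hbar^{-N+L\varepsilon}\|F^\hbar\|_{\ell^2(\Z^{2d})}$; taking $L\geq (N+m)/\varepsilon$ makes this $\lesssim\hbar^m\|F^\hbar\|_{\ell^2(\Z^{2d})}$. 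For each sum term, I would expand $f_\hbar$ in the Gaussian frame, apply Cauchy--Schwarz in $\ell^2(\Lambda_{\hbar,{\rm rhs}})$, and then invoke Proposition \ref{proposition_quasi_orthogonality} with the differential operator $(p-P_\hbar)^k P_\hbar^{L-k-1}$ (of order at most $2L-2$ and, thanks to $|p|\leq\delta_0$, with coefficients uniformly bounded in $C^\infty_{\rm b}$), $\Lambda=\Lambda_{\hbar,{\rm rhs}}$ (of diameter $\lesssim\hbar^{-1/2}$ by \eqref{eq_diam_LBA}), $\Lambda'=\{[\bm,\bn]\}$ and $\rho\gtrsim\hbar^{-\varepsilon}$; taking the proposition's $m$ large enough in terms of the already fixed $L$ then makes each such contribution dominate the factor $|p|^{-L}\lesssim\hbar^{-L(1/2-\varepsilon)}$ by any prescribed power of $\hbar$. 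The main technical point is checking that all constants remain uniform in $[\bm,\bn]\in\Lambda_{\hbar,{\rm mid}}^\star$, which reduces to $|p|\leq\delta_0$ on this set combined with the uniform $C^\infty_{\rm b}$ bounds on the symbol coefficients assumed in Section \ref{ssec:general}.
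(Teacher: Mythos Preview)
Your proposal is correct and follows essentially the same strategy as the paper: split $p^L(u_\hbar,\gshmn)$ into a remainder term involving $(\overline{p}-P_\hbar^\star)^L\gshmn$, controlled via \eqref{eq_iterated_adjoint_operator} and the resolvent bound \eqref{eq_polynomial_resolvant}, and source terms involving $f_\hbar$, controlled via Proposition~\ref{proposition_quasi_orthogonality} and the separation \eqref{eq_dist_LBA}. The only difference is the algebraic identity: the paper iterates $u_\hbar=\frac{1}{p}f_\hbar+\frac{1}{p}(P_\hbar-p)u_\hbar$ to obtain a telescoping expansion $p^r(u_\hbar,\gshmn)=(u_\hbar,(P_\hbar^\star-\overline p)^r\gshmn)+\sum_{\ell=1}^r p^{r-\ell}(f_\hbar,(P_\hbar^\star-\overline p)^{\ell-1}\gshmn)$, whereas you use the binomial expansion of $\overline{p}^L=((\overline{p}-P_\hbar^\star)+P_\hbar^\star)^L$; this is a cosmetic variation, and your subsequent estimates (including the uniformity in $[\bm,\bn]$ via $|p|\leq\delta_0$) go through exactly as you outline.
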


\begin{proof}
Throughout the proof, we fix a pair of indices $[\bm,\bn] \in \Lambda_{\hbar,{\rm mid}}^\star$.
By definition of $\Lambda_{\hbar,{\rm mid}}^\star$, the assumption that
$\alpha+7\hbar^{1/2-\varepsilon} \leq \delta_0$ and \eqref{eq_assumption_symbol_bounded},
we have
\begin{equation}
\label{tmp_bound_pmn_LBA_mid}
c\hbar^{1/2-\varepsilon} \leq |\pp_\hbar(\bm,\bn)| \leq C.
\end{equation}
In particular, we can write
\begin{equation*}
u_\hbar
=
\frac{1}{\pp_\hbar(\bm,\bn)} f_\hbar
+
\frac{1}{\pp_\hbar(\bm,\bn)} (P_\hbar-\pp_\hbar(\bm,\bn)) u_\hbar.
\end{equation*}
Since $f_\hbar$ is smooth, so is $u_\hbar$, and we can iterate this relation, leading to
\begin{equation*}
u_\hbar
=
\sum_{\ell=1}^r \frac{1}{\pp_\hbar(\bm,\bn)^\ell} (P_\hbar-\pp_\hbar(\bm,\bn))^{\ell-1} f_\hbar
+
\frac{1}{\pp_\hbar(\bm,\bn)^r} (P_\hbar-\pp_\hbar(\bm,\bn))^r u_\hbar,
\end{equation*}
and
\begin{align}
\nonumber
(u_\hbar,\gshmn)
&=
\sum_{\ell=1}^r \frac{1}{\pp_\hbar(\bm,\bn)^\ell}
(f_\hbar,(P_\hbar^\star-\overline{\pp}_\hbar(\bm,\bn))^{\ell-1} \gshmn)
\\
\label{tmp_iterated_relation}
&+
\frac{1}{\pp_\hbar(\bm,\bn)^r} (u_\hbar,(P_\hbar^\star-\overline{\pp}_\hbar(\bm,\bn))^r \gshmn)
\end{align}
for all $r \in \N$.

Then, if $[\bm',\bn'] \in \Lambda_{\hbar,{\rm rhs}}$, using the upper-bound
in \eqref{tmp_bound_pmn_LBA_mid}, we have
\begin{align*}
|(\gshmnp,(P_\hbar^\star-\overline{\pp}_\hbar(\bm,\bn))^{\ell-1} \gshmn)|
&=
\left |
\sum_{j=0}^{\ell-1}
\left (
\begin{array}{c}
\ell-1 \\ j
\end{array}
\right )
\overline{\pp}_\hbar(\bm,\bn)^{\ell-1-j} (\gshmnp,(P_\hbar^\star)^j \gshmn)
\right |
\\
&\leq
C_\ell 
\sum_{j=0}^{\ell-1} |(\gshmnp,(P_\hbar^\star)^j \gshmn)|.
\end{align*}
Recalling \eqref{eq_diam_LBA}, applying
\eqref{eq_phase_space_localisation} gives
\begin{equation*}
|(\gshmnp,(P_\hbar^\star-\overline{\pp}_\hbar(\bm,\bn))^{\ell-1} \gshmn)|
\leq
C_{\ell,n}(1+|[\bm,\bn]-[\bm',\bn']|)^{-n}
\leq
C_{\ell,n}\hbar^{\varepsilon n},
\end{equation*}
for all $n \in \N$, since $|[\bm,\bn]-[\bm',\bn']| \geq \hbar^{-\varepsilon}$
due to \eqref{eq_dist_LBA}.
The case $\ell=1$ also easily follows by Proposition \ref{proposition_quasi_orthogonality}
We then write that
\begin{align*}
|(f_\hbar,(P_\hbar^\star-\overline{\pp}_\hbar(\bm,\bn))^{\ell-1} \gshmn)|
&\leq
\sum_{[\bm',\bn'] \in \Lambda_{\hbar,{\rm rhs}}}
|F^\hbar_{\bm,\bn}||(\gshmnp,(P_\hbar^\star-\overline{\pp}_\hbar(\bm,\bn))^{\ell-1} \gshmn)|
\\
&\leq
C_{\ell,n} \hbar^{\varepsilon n}
\sum_{[\bm',\bn'] \in \Lambda_{\hbar,{\rm rhs}}} |F^\hbar_{\bm,\bn}|
\\
&\leq
C_{\ell,n} \hbar^{\varepsilon n-d/2} \|F^\hbar\|_{\ell^2(\Z^{2d})},
\end{align*}
where we used the Cauchy-Schwarz inequality and the fact that
$|\Lambda_{\hbar,{\rm rhs}}| \leq C \hbar^{-d}$ due to \eqref{eq_diam_LBA}.
As a result, using the lower-bound in \eqref{tmp_bound_pmn_LBA_mid}, we have
\begin{equation*}
\frac{1}{|\pp_\hbar(\bm,\bn)|^\ell}
|(f_\hbar,(P_\hbar^\star-\overline{\pp}_\hbar(\bm,\bn))^{\ell-1} \gshmn)|
\leq
C_{\ell,n} \hbar^{\varepsilon n-d/2-\ell/2} \|F^\hbar\|_{\ell^2(\Z^{2d})},
\end{equation*}
and
\begin{equation*}
\sum_{\ell=1}^r
\frac{1}{|\pp_\hbar(\bm,\bn)|^\ell}
|(f_\hbar,(P_\hbar^\star-\overline{\pp}_\hbar(\bm,\bn))^{\ell-1} \gshmn)|
\leq
C_{m,n} \hbar^{\varepsilon n-d/2-r/2} \|F^\hbar\|_{\ell^2(\Z^{2d})},
\end{equation*}
for all $n \in \N$. Thus, for any $m \in \N$, we can select $n = n(m,d,r,\varepsilon)$ such that
$\varepsilon n - (d+r)/2 \geq m$, leading to
\begin{equation*}
\sum_{\ell=1}^r
\frac{1}{|\pp_\hbar(\bm,\bn)|^\ell}
|(f_\hbar,(P_\hbar^\star-\overline{\pp}_\hbar(\bm,\bn))^{\ell-1} \gshmn)|
\leq
C_{\varepsilon,m,r} \hbar^{m} \|F^\hbar\|_{\ell^2(\Z^{2d})}.
\end{equation*}

On the other hand, using again \eqref{eq_diam_LBA} and applying
\eqref{eq_iterated_adjoint_operator}, we have
\begin{equation*}
\|(P_\hbar^\star-\overline{\pp_\hbar}(\bm,\bn))^r \gshmn\|_{L^2(\R^d)}
\leq
C_r \hbar^{r/2},
\end{equation*}
and the lower bound in \eqref{tmp_bound_pmn_LBA_mid} shows that
\begin{equation*}
\frac{1}{|\pp_\hbar(\bm,\bn)|^r}\|
(P_\hbar^\star-\overline{\pp_\hbar}(\bm,\bn))^r \gshmn\|_{L^2(\R^d)}
\leq
C_r \hbar^{\varepsilon r}.
\end{equation*}
We then write that
\begin{multline*}
\frac{1}{|\pp_\hbar(\bm,\bn)|^r}
|(u_\hbar,(P_\hbar^\star-\overline{\pp_\hbar}(\bm,\bn))^r \gshmn)|
\leq
C_r \hbar^{\varepsilon r} \|u_\hbar\|_{L^2(\R^d)}
\leq
\\
C_r \hbar^{\varepsilon r-N} \|f_\hbar\|_{L^2(\R^d)}
\leq
C_r \hbar^{\varepsilon r-N} \|F^\hbar\|_{\ell^2(\Z^{2d})}
\leq
C_{\varepsilon,m} \hbar^m \|F^\hbar\|_{\ell^2(\Z^{2d})},
\end{multline*}
up to picking $r$ such that $\varepsilon r - N \geq m$. \qed
\end{proof}

\begin{proof}[Proof of Theorem \ref{theorem_approximability}]
We expand $u_\hbar$ in the frame $(\gshmn)_{[\bm,\bn] \in \Z^{2d}}$ as
\begin{equation*}
u_\hbar = u_\hbar^{\rm near} + u_\hbar^{\rm mid} + u_\hbar^{\rm far}
\end{equation*}
where
\begin{align*}
u_\hbar^{\rm near}
&\eq
\sum_{[\bm,\bn] \in \Lambda_{\hbar,{\rm near}}}
(u_{\hbar},\gshmn^\star)\gshmn
\\
u_\hbar^{\rm mid}
&\eq
\sum_{[\bm,\bn] \in \Lambda_{\hbar,{\rm mid}}}
(u_{\hbar},\gshmn^\star)\gshmn
\\
u_\hbar^{\rm far}
&\eq
\sum_{[\bm,\bn] \in \Lambda_{\hbar,{\rm far}}}
(u_{\hbar},\gshmn^\star)\gshmn.
\end{align*}
The proof then consists in showing that $u_\hbar^{\rm mid}$ and $u_\hbar^{\rm far}$ are small.

{\bf Step 1.} We first treat the $u_\hbar^{\rm mid}$ term. To do so,
we start by introducing the approximation
\begin{equation}
\label{tmp_def_tilde_umid}
\widetilde u_\hbar^{\rm mid}
\eq
\sum_{[\bm,\bn] \in \Lambda_{\hbar,{\rm mid}}}
\sum_{\substack{[\bm'\bn'] \in \Z^{2d} \\ |[\bm,\bn]-[\bm',\bn']| \leq \hbar^{-\varepsilon/2}}}
U^{\bm,\bn}_{\bm',\bn'}
(u_{\hbar},\gshmnp) \gshmn.
\end{equation}
Recalling \eqref{eq_LBA_mid_star_inclusion}, all the $[\bm',\bn']$ indices in the sum belong to the
enlarged set $\Lambda_{\hbar,{\rm mid}}^\star$, so that
\begin{equation}
\label{tmp_u_test_mid_star}
|(u_{\hbar},\gshmnp)| \leq C_{\varepsilon,m} \hbar^m \|F^{\hbar}\|_{\ell^2(\Z^{2d})}
\end{equation}
by Lemma \ref{lemma_LBA_mid_star}.
Recalling \eqref{eq_diam_LBA}, $|[\bm,\bn]| \leq C\hbar^{-1/2}$ for
all $[\bm,\bn] \in \Lambda_{\hbar,{\rm mid}}$, and we have from \eqref{eq_norm_gskmn}
\begin{equation}
\label{tmp_gauss_hp_norm}
\|\gshmn\|_{\HH_\hbar^p(\R^d)} \leq C_p.
\end{equation}
Thus, plugging \eqref{tmp_u_test_mid_star} and \eqref{tmp_gauss_hp_norm} into
\eqref{tmp_def_tilde_umid}, we have
\begin{align}
\nonumber
\|\widetilde u_\hbar^{\rm mid}\|_{L^2(\R^d)}
&\leq
C_{p,\varepsilon,m} \hbar^{m} \|F^\hbar\|_{\ell^2(\Z^{2d})}
\sum_{[\bm,\bn] \in \Lambda_{\hbar,{\rm mid}}}
\sum_{\substack{[\bm'\bn'] \in \Z^{2d} \\ |[\bm,\bn]-[\bm',\bn']| \leq \hbar^{-\varepsilon/2}}}
1
\\
\label{tmp_norm_tilde_umin}
&\leq
C_{p,\varepsilon,m} \hbar^{m-\varepsilon d^2/2} \|F^\hbar\|_{\ell^2(\Z^{2d})}.
\end{align}

We now estimate the difference between $u_\hbar^{\rm mid}$ and $\widetilde u_\hbar^{\rm mid}$
\begin{align*}
u_\hbar^{\rm mid} - \widetilde u_\hbar^{\rm mid}
=
\sum_{[\bm,\bn] \in \Lambda_{\hbar,{\rm mid}}}
\left (u_\hbar,
\gshmn^\star-
\sum_{\substack{[\bm',\bn'] \in \Z^{2d} \\ |[\bm,\bn]-[\bm',\bn']| \leq \hbar^{-\varepsilon/2}}}
U^{\bm,\bn}_{\bm',\bn'} \gshmnp
\right )
\gshmn,
\end{align*}
so that
\begin{align*}
\|u_\hbar^{\rm mid} - \widetilde u_\hbar^{\rm mid}\|_{\HH_\hbar^p(\R^d)}
\leq
C_p
\|u_\hbar\|_{L^2(\R^d)}
\sum_{[\bm,\bn] \in \Lambda_{\hbar,{\rm mid}}}
\left \|
\gshmn^\star
-
\sum_{\substack{[\bm',\bn'] \in \Z^{2d} \\ |[\bm,\bn]-[\bm',\bn']| \leq \hbar^{-\varepsilon/2}}}
\gshmnp
\right \|_{L^2(\R^d)}
\end{align*}
and it follows from Proposition \ref{proposition_tight_expansion} that
\begin{equation}
\label{tmp_diff_tilde_norm}
\|u_\hbar^{\rm mid} - \widetilde u_\hbar^{\rm mid}\|_{\HH_\hbar^p(\R^d)}
\leq
C_{p,\varepsilon,m} \hbar^{m-d} \|u_\hbar\|_{\HH_\hbar^p(\R^d)}
\leq
C_{p,\varepsilon,m} \hbar^{m-d-N} \|F^\hbar\|_{\ell^2(\Z^{2d})}.
\end{equation}
Then, it follows from \eqref{tmp_norm_tilde_umin} and \eqref{tmp_diff_tilde_norm} that
\begin{equation*}
\|u_\hbar^{\rm mid}\|_{\HH_\hbar^p(\R^d)}
\leq
C_{p,\varepsilon,m} \hbar^m \|F^{\hbar}\|_{\ell^2(\Z^{2d})}
\qquad
\forall m \in \N,
\end{equation*}
up to redefining $m$.   

{\bf Step 2.} We then turn our attention to $u_\hbar^{\rm far}$. On the one hand, we can apply
Proposition \ref{proposition_quasi_orthogonality} with
$\CP_{\hbar,L,\TA} \eq \hbar^{[\bbeta]}\partial^{\bbeta} \circ P_\hbar$,
$\Lambda = \{[\bm,\bn]\} \subset\Lambda_{\hbar,{\rm near}}^\star$ and
$\Lambda' = \Lambda_{h, \mathrm{{far}}}$. Using Lemma \ref{Lem:Index}
and the Cauchy-Schwarz inequality, this gives
\begin{align*}
|(\bx^{\balpha} \partial^{\bbeta} (P_\hbar u_\hbar^{\rm far}),\gshmn)| &= \left| \sum_{[\bm',\bn'] \in \Lambda_{\hbar,{\rm far}}}
(u_{\hbar},\gshmnp^\star)
(\bx^{\balpha}\CP_{\hbar,L,\TA} \gshmnp,\gshmn) \right|\\
&\leq  C_{\balpha, \bbeta,r} \|u_h\|_{L^2(\R^d)} \hbar^r
\quad
\forall r \in \N.
\end{align*}

As a result, since $|\Lambda_{\hbar,{\rm near}}^\star| \leq C\hbar^{-d}$ due to
\eqref{eq_diam_LBA}, we have
\begin{align*}
\sum_{[\bm,\bn] \in \Lambda_{\hbar,{\rm near}}^\star}
|(\bx^{\balpha} \partial^{\bbeta} (P_\hbar u_\hbar^{\rm far}),\gshmn)|^2
&\leq
C_{\balpha,\bbeta,r} \hbar^{2r-d}
\|u_\hbar\|_{L^2(\R^d)}^2
\end{align*}
which we rewrite as
\begin{equation*}
\sum_{[\bm,\bn] \in \Lambda_{\hbar,{\rm near}}^\star}
|(\bx^{\balpha} \partial^{\bbeta} (P_\hbar u_\hbar^{\rm far}),\gshmn)|^2
\leq
C_{\balpha,\bbeta,m} \hbar^{2m} \|u_\hbar\|_{L^2(\R^d)}^2 \qquad \forall m \in \N
\end{equation*}
after changing variables.

On the other hand, if $[\bm,\bn] \in \Lambda_{\hbar,{\rm far}}^\star$, we write that
\begin{equation*}
P_\hbar u_\hbar^{\rm far}
=
f_\hbar - P_\hbar u_\hbar^{\rm near} - P_\hbar u_\hbar^{\rm mid},
\end{equation*}
so that
\begin{equation*}
|(\bx^{\balpha}\partial^{\bbeta}(P_\hbar u_\hbar^{\rm far}),\gshmn)|^2
\leq
C \left (
|(\bx^{\balpha}\partial^{\bbeta} f_\hbar,\gshmn)|^2
+
|(\bx^{\balpha}\partial^{\bbeta}(P_\hbar u_\hbar^{\rm near}),\gshmn)|^2
+
|(\bx^{\balpha}\partial^{\bbeta}(P_\hbar u_\hbar^{\rm mid}),\gshmn)|^2
\right ).
\end{equation*}
We then have
\begin{equation*}
\sum_{[\bm,\bn] \in \Lambda_{\hbar,{\rm far}}^\star}
|(\bx^{\balpha}\partial^{\bbeta}(P_\hbar u_\hbar^{\rm mid}),\gshmn)|^2
\leq
C \|\bx^{\balpha}\partial^{\bbeta}(P_\hbar u_\hbar^{\rm mid})\|_{L^2(\R^d)}^2
\leq
C \|u_\hbar^{\rm mid}\|_{\HH_\hbar^{[\bbeta]+2}(\R^d)}^2
\leq
C_{\bbeta,m} \hbar^m \|F^\hbar\|_{\ell^2(\Z^{2d})}^2
\end{equation*}
due to {\bf Step 1}. Next, thanks to Proposition \ref{proposition_quasi_orthogonality},
\begin{align*}
\sum_{[\bm,\bn] \in \Lambda_{\hbar,{\rm far}}^\star}
|(\bx^{\balpha}\partial^{\bbeta} f_\hbar,\gshmn)|^2
&\leq
\sum_{[\bm,\bn] \in \Lambda_{\hbar,{\rm far}}^\star}
\sum_{[\bm',\bn'] \in \Lambda_{\hbar,{\rm rhs}}}
|F^\hbar_{\bm,\bn}|^2|(\bx^{\balpha} \partial^{\bbeta} \gshmnp,\gshmn)|^2
\\
&\leq
C_{\balpha,\bbeta,m}^2 \hbar^{2m} \|F^\hbar\|_{\ell^2(\Z^{2d})}^2.
\end{align*}
Finally, again by Proposition \ref{proposition_quasi_orthogonality}, we have
\begin{align*}
\sum_{[\bm,\bn] \in \Lambda_{\hbar,{\rm far}}^\star}
|(\bx^{\balpha}\partial^{\bbeta} (P_\hbar u_\hbar^{\rm near}),\gshmn)|^2
&\leq
\sum_{[\bm,\bn] \in \Lambda_{\hbar,{\rm far}}^\star}
\sum_{[\bm',\bn'] \in \Lambda_{\hbar,{\rm near}}}
|(u,\gskmnp^\star)||(\bx^{\balpha}\partial^{\bbeta} (P_\hbar \gshmnp),\gshmn)|^2
\\
&\leq
C_{\balpha,\bbeta,r} \hbar^{2r} \|u_\hbar\|_{L^2(\R^d)}^2
\leq
C_{\balpha,\bbeta,m} \hbar^{2m} \|F^\hbar\|_{\ell^2(\Z^{2d})}^2.
\end{align*}

Since $\Lambda_{\hbar,{\rm near}}^\star$ and $\Lambda_{\hbar,{\rm far}}^\star$
form a partition of $\Z^{2d}$, we have thus shown that
\begin{equation}
\label{tmp_regularity_ffar}
\|\bx^{\balpha}\partial^{\bbeta}(P_\hbar u_\hbar^{\rm far})\|_{L^2(\R^d)}
\leq
C_{\balpha,\bbeta,m} \hbar^m \|F^{\hbar}\|_{\ell^2(\Z^{2d})}.
\end{equation}
Letting $f_\hbar^{\rm far} = P_\hbar u_\hbar^{\rm far}$, we see from
\eqref{tmp_regularity_ffar} that $u_\hbar^{\rm far}$ solves
$P_\hbar u_\hbar^{\rm far} = f_\hbar^{\rm far}$ with a right-hand side
$f_\hbar^{\rm far} \in \HH_\hbar^p(\R^d)$ such that
\begin{equation*}
\|f_\hbar^{\rm far}\|_{\HH_\hbar^p(\R^d)}
\leq
C_{p,m} \hbar^m \|F^{\hbar}\|_{\ell^2(\Z^{2d})}
\quad
\forall m \in \N.
\end{equation*}
Then, we conclude the proof with \eqref{eq_polynomial_resolvant}. \qed
\end{proof}

\section{Application to the Helmholtz equation}
\label{sec:helmholtz}

We now turn our attention to the model problem of the Helmholtz equation
\eqref{eq_helmholtz_intro}, with a particular focus on the case of plane-wave scattering.

\subsection{Notation}

In this section $k$ will denote the wavenumber in the Helmholtz problem,
For the sake of simplicity, we assume that $kR \geq 1$. We will apply the
results of Section \ref{sec:setting} with $\hbar \sim (kR)^{-1}$. As a result,
the norms
\begin{equation}
\label{eq_weighted_norm_k}
\|v\|_{\HH_k^p(\R^d)}^2
\eq
\sum_{\substack{\balpha \in \N^d \\ [\balpha] \leq p}}
\sum_{q \leq p - [\balpha]}
k^{-2[\balpha]} \left\|\left|\frac{\bx}{R}\right|^q \partial^{\balpha} v\right\|_{L^2(\R^d)}^2
\end{equation}
will be convenient. Notice that if
\begin{equation*}
\CF(v)(\bxi)
\eq
(2\pi)^{-d/2} \int_{\R^d} v(\bx) e^{-i \bx \cdot \bxi} \mathrm{d}\bx,\, \mbox{ a.e. }\bxi \in \R^d.
\end{equation*}
is the Fourier transform of $v \in L^2(\R^d)$, and if we define the ``reverse'' norm by 
\begin{equation}
\|v\|_{\check{H}_k^p(\R^d)}^2
\eq
\sum_{\substack{\balpha \in \N^d \\ [\balpha] \leq p}}
\sum_{q \leq p - [\balpha]}
k^{-2q} \left\|\left|\frac{\bx}{R}\right|^q \partial^{\balpha} v\right\|_{L^2(\R^d)}^2,
\end{equation}
then we have
\begin{equation}
\label{eq:TFNormePoids}
c(R)
\|\CF v\|_{\check{H}^p_k(\R^d)}
\leq
\|v\|_{\HH^p_k(\R^d)}
\leq
C(R) \|\CF v\|_{\check{H}^p_k(\R^d)}.
\end{equation}
We will also use the following ``standard'' norm
\begin{equation*}
\|v\|_{H_k^p(\R^d)}^2
\eq
\sum_{\substack{\balpha \in \N^d \\ [\balpha] \leq p}}
k^{-2[\balpha]}\|\partial^{\balpha} v\|_{L^2(\R^d)}^2.
\end{equation*}

\subsection{Model problem}
\label{section_model_problem}

\begin{subequations}
\label{eq_helmholtz}
We consider smooth coefficients $\mu,\BA \in C_{\rm b}^\infty(\R^d)$
that are respectively equal to $1$ and $\BI$ outside $B(0,R)$. Given
$f: \Omega \to \mathbb C$ our model problem is to find $u: \Omega \to \mathbb C$
such that
\begin{equation}
\label{eq_helmholtz_volume}
-k^2 \mu u - \div \left (\BA\grad u\right ) = f \text{ in } \R^d
\end{equation}
and
\begin{equation}
\label{eq_sommerfeld}
\frac{\partial u}{\partial |\bx|}(\bx) - ik u(\bx)
=
o\left( |\bx|^{(-d-1)/2}\right)
\text{ as }
|\bx| \to +\infty.
\end{equation}
\end{subequations}
Problem \eqref{eq_helmholtz} is well-posed in the sense that
for all $f \in L^2_{\rm comp}(\R^d)$, there exists a unique
$R_k f \eq u \in L^2_{\rm loc}(\R^d)$ such that \eqref{eq_helmholtz_volume}
and \eqref{eq_sommerfeld} hold true.

Following the assumption on $\hbar$ in Section \ref{section_h_notations}
and assumption \eqref{eq_polynomial_resolvant}, we focus on wave numbers such that
$k \in \LK \subset [1/R,+\infty)$, and assume that there exists $N \geq 1$ such that
\begin{equation}
\label{eq_resolvant_helmholtz}
k^2 \|\cutoff R_k \cutoff\|_{L^2(\R^d) \to L^2(\R^d)}
\leq
C (kR)^N
\qquad
\forall k \in \LK
\end{equation}
where $\cutoff$ is a smooth cut-off function that takes the value $1$ on $B(0,R)$
and $0$ outside $B(0,2R)$.

Notice that in view of the term $-k^2\mu u$, in \eqref{eq_helmholtz_volume},
the factor $k^2$ in \eqref{eq_resolvant_helmholtz} ensures that the right-hand side
does not bear any physical dimension. We have explicitly required that $N \geq 1$ for
the sake of clarity, but in fact, it can be shown that $N = 1$ is the best possible
exponent for which \eqref{eq_resolvant_helmholtz} can hold true, see e.g.
\cite[Lemma 4.1]{chaumontfrelet_gallistl_nicaise_tomezyk_2018a}.

\begin{remark}[When does the polynomial bound actually hold?]
\label{rem:ResolEst}
The bound \eqref{eq_resolvant_helmholtz} is known to hold in several situations:
\begin{itemize}
\item
When the dynamics induced by the Hamiltonian $p$ has no trapped trajectory, i.e.,
when every trajectory leaves any compact set in finite time, the assumption holds
with $\LK = [1/R, +\infty)$. See for instance \cite{galkowski2019optimal}.
This situation is often referred to as ``non-trapping''.
\item
When the dynamics induced by $p$ has a trapped set, and the dynamics is hyperbolic,
close to this trapped set, it has been conjectured in \cite{zworski2017mathematical}
that \eqref{eq_polynomial_resolvant} always holds with $\LK = [1/R, +\infty)$.
Actually, this is already known when the trapped set is ``filamentary enough'', see
\cite{nonnenmacher2011spectral,nonnenmacher2009quantum}.
\item
Without any assumption on the dynamics, \eqref{eq_polynomial_resolvant} holds taking
$\LK$ to be $[1/R, +\infty)$ from which we exclude a set of frequencies $k$ whose
intersection with $\{k \geq k_0\}$ has a length going to zero as $k_0 \to +\infty$. We
refer the reader to \cite{lafontaine_spence_wunsch_2019a} for more details.
\end{itemize}
\end{remark}

\subsection{Perfectly matched layers}\label{sec:PML}

As advertised in the introduction, the formulation \eqref{eq_helmholtz} is not
suited for immediate discretization by ``volume'' methods, as the radiation condition
is hard to take into account. We will thus rely on an equivalent formulation that uses
perfectly matched layers (PML).

Specifically, given $f: \Omega \to \C$, we consider the problem to find
$u: \Omega \to \C$ such that $P_k u = f$ where
\begin{equation}
\label{eq:PML}
P_k u
\eq
-\frac{1}{k^2}
\left((\BI+ i \BM)^{-1} \grad \right) \cdot \left((\BI+ i \BM)^{-1} \BA \grad u \right)
-\mu u.
\end{equation}
In \eqref{eq:PML}, the (SPD) matrix function $\BM$ is given by
\begin{equation*}
\BM(\bx)
\eq
\frac{g(|\bx|)}{|\bx|^3}
\left (
|\bx|^2 \mathrm{Id} - \bx\otimes \bx
\right )
+
\frac{g'(|\bx|)}{|\bx|^2} \bx \otimes \bx,
\end{equation*}
where $g : \R \longrightarrow \R$ is a user-defined function such that $g(r) = 0$ if
$r \leq R$, $g(r) = r$ if $r\geq R_0 > R$, and $g'(r) \geq 0$.
In what follows, we will assume that $g$ is a smooth function to
satisfy the assumptions of Section \ref{sec:setting}, but many
results about PML still hold with less regular $g$ (see, e.g., \cite{galkowski2021perfectly}).

Notice that, if $|\bx| \leq R$, $\BM = \boldsymbol 0$, so that the original
operator is not modified on the support of $\mu-1$ and $\BA-\BI$. On the other
hand, $\BM = \BI$ if $|\bx| \geq R_0$, so that dissipation is introduced away
from the origin, where the operator simply reads
\begin{equation*}
P_kv = \frac{i}{2k^2} \Delta v - v
\qquad \text{ whenever } \qquad
\operatorname{supp} v \cap B(0,2R) = \emptyset.
\end{equation*}
This transformation can be naturally interpreted as a complex deformation
of coordinates. It is also often called the complex scaling technique.
Crucially, the PML is designed in such way that
\begin{equation}
\label{eq_pml_exact}
\left ((P_k)^{-1} f\right )|_{B(0,R)} = \left (R_k f\right )|_{B(0,R)}
\end{equation}
whenever $\operatorname{supp} f \subset B(0,R)$.
We refer the reader to \cite[\S 4.5]{dyatlov2019mathematical} or \cite{galkowski2021perfectly}
for more information.

\subsection{Abstract setting}

We now verify that the Helmholtz problem formulated with PML indeed fits
the abstract setting of Section \ref{ssec:general}. The only non-trivial
facts to establish are the polynomial resolvent estimates in $\HH^q(\R^d)$
in \eqref{eq_polynomial_resolvant} and the boundedness of the energy layer
\eqref{eq_assumption_symbol_bounded}.

\begin{lemma}[Resolvent estimates]
Let $q \in \N$. For all $f \in \HH^q(\R^d)$, there exists a unique $u \in L^2(\R^d)$
such that $P_k u = f$. In addition, we have $u \in \HH^q(\R^d)$ with
\begin{equation*}
k^2\|u\|_{\HH^q_k(\R^d)}
\leq
C (kR)^{N} \|f\|_{\HH^q_k(\R^d)}.
\end{equation*}
Furthermore, if $\operatorname{supp} f \subset B(0,R)$, then $u \in \HH^{q+2}(\R^d)$ with
\begin{equation}\label{eq:ResolvantePoly}
k^2\|u\|_{\HH^{q+2}_k(\R^d)}
\leq
C (kR)^{N} \|f\|_{\HH^q_k(\R^d)}.
\end{equation}
\end{lemma}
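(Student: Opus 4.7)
My plan is to bootstrap the $L^2$-resolvent bound \eqref{eq_resolvant_helmholtz} for the original Helmholtz problem up to the claimed $\HH^q_k$ estimates for the PML operator $P_k$, by standard commutator and elliptic-regularity arguments.

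First I would establish the $L^2 \to L^2$ bound $\|P_k^{-1}\|_{L^2(\R^d) \to L^2(\R^d)} \leq C(kR)^N$. When $\operatorname{supp} f \subset B(0, R)$, this is immediate: the exact identity \eqref{eq_pml_exact} gives $(P_k^{-1} f)|_{B(0,R)} = (R_k f)|_{B(0,R)}$, and \eqref{eq_resolvant_helmholtz} bounds the right-hand side in $L^2(B(0,R))$. To globalize, I would exploit the form $P_k v = -(2i/k^2)\Delta v - \mu v$ valid outside $B(0,R_0)$; its strictly dissipative imaginary part yields both a uniform bound and exponential decay of $u$ at infinity, and a partition-of-unity argument glues the interior and exterior estimates. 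For general $f \in L^2(\R^d)$, I would split $f$ by a smooth cutoff and handle each piece as above. Existence and uniqueness then follow from a Fredholm argument, uniqueness being inherited from the Sommerfeld problem through \eqref{eq_pml_exact} and existence from the fact that $P_k$ is a compactly supported perturbation of the invertible reference operator $-(2i/k^2)\Delta - 1$.

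Next I would upgrade the $L^2$ estimate to the weighted $\HH^q_k$ estimate by induction on $q$. For $[\balpha] \leq q$ and $q' \leq q - [\balpha]$, writing $v \eq \omega_{q'} \partial^{\balpha} u$ with $\omega_{q'}$ a smooth version of $|\bx/R|^{q'}$ (defined near the origin), the product rule gives
\[
P_k v = \omega_{q'} \partial^{\balpha} f + \bigl[P_k, \omega_{q'} \partial^{\balpha}\bigr] u.
\]
Since the coefficients of $P_k$ lie in $\CIB(\R^d)$, the commutator is a sum of terms of the form $c^k_{\bbeta,q''} \omega_{q''} \partial^{\bbeta}$ with $[\bbeta] + q'' \leq [\balpha] + q' + 1$ and $[\bbeta] \leq [\balpha] + 1$, i.e. strictly lower than $P_k \circ \omega_{q'} \partial^{\balpha}$ in the $\HH^q_k$ scale. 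Applying Step~1 to $v$ and the inductive hypothesis to the commutator closes the induction. A priori finiteness of the weighted norms is guaranteed by the exponential decay of $u$ in the absorbing exterior, established in the first step. For the two-derivative gain when $\operatorname{supp} f \subset B(0, R)$, I would appeal to classical elliptic regularity: the principal symbol of $P_k$ reads $k^{-2} \bxi^{\top} (\BI + i\BM)^{-1} \BA (\BI + i\BM)^{-1} \bxi$, which is non-vanishing for $\bxi \neq \bzero$ throughout $\R^d$; solving for the second derivatives in $P_k u = f$ then yields $\|k^{-2} D^2 u\|_{L^2} \leq C(\|f\|_{L^2} + \|u\|_{H^1_k})$, and iterating $\partial^{\balpha}$ through the equation gives $\|u\|_{H^{q+2}_k} \leq C(\|f\|_{H^q_k} + \|u\|_{H^{q+1}_k})$. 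The compact support assumption is used to guarantee that $P_k u = 0$ outside $B(0,R)$, making $u$ smooth and exponentially decaying there, so the polynomial-weighted norms in $\HH^{q+2}_k$ are automatically finite.

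The main obstacle is the careful bookkeeping of the commutator with the weights $|\bx/R|^{q'}$ in the second step, since each derivative that falls on $\omega_{q'}$ lowers the weight and introduces an $R^{-1}$ factor that must be tracked uniformly in $k$. The decisive structural input, without which the whole scheme fails, is the exponential decay of $u$ supplied by the PML absorption: once this decay is known at the $L^2$ level, all higher-order weighted norms follow from the same commutator scheme. The $+2$ gain, by contrast, is a routine elliptic regularity statement that does not interact with the $k$-dependence beyond what is already encoded in the $L^2$-resolvent bound.
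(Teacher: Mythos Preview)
Your strategy has the right building blocks but the commutator induction in Step~2 does not close as written. You apply only the $L^2\to L^2$ bound from Step~1 to $v=\omega_{q'}\partial^{\balpha}u$, so you need $\|[P_k,\omega_{q'}\partial^{\balpha}]u\|_{L^2}$ on the right-hand side. But $[P_k,\partial^{\balpha}]$ is a differential operator of order $[\balpha]+1$ (one derivative falls on a coefficient of the second-order $P_k$, leaving an operator of order $[\balpha]+1$), so the commutator term involves \emph{more} derivatives of $u$ than the quantity you are trying to bound, and the induction goes the wrong way. The elliptic $L^2\to H^2_k$ gain that you postpone to the end is in fact needed \emph{before} the commutator scheme can start; only once you know $\|u\|_{H^{q+2}_k}\leq C(kR)^N\|f\|_{H^q_k}$ in the unweighted scale can you hope to absorb the $[\balpha]+1$ derivatives produced by the commutator. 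A second issue is that every pass through $P_k^{-1}$ in your scheme contributes a fresh factor $(kR)^N$, so after $q$ steps you obtain $(kR)^{Nq}$ rather than a single $(kR)^N$; the abstract framework \eqref{eq_polynomial_resolvant} requires $N$ to be independent of the Sobolev index.

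The paper avoids both problems by a different route. It first quotes the PML resolvent estimate $\|P_k^{-1}\|_{L^2\to H^2_k}\leq C\|\chi R_k\chi\|_{L^2\to L^2}$ from Galkowski--Lafontaine--Spence and bootstraps to $H^q_k\to H^{q+2}_k$ by differentiating the equation (this costs no extra powers of $k$ because one solves for $D^2u$ algebraically from $P_ku=f$ rather than reapplying $P_k^{-1}$). For the polynomial weights, instead of commutators it exploits the explicit exterior form $2ik^2u+\Delta u=g$ with $g=-2k^2(f+Q_ku)$ and $Q_k$ compactly supported: taking Fourier transforms turns $\bx^{\balpha}\partial^{\bbeta}u$ into derivatives of $\bxi^{\bbeta}(-|\bxi|^2+2ik^2)^{-1}\CF g$, and since $g$ is compactly supported the weighted norms of $g$ reduce to unweighted ones already controlled. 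This Fourier-multiplier step is the decisive idea that keeps the exponent $N$ uniform in $q$.
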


\begin{proof}
We first invoke Theorem 1.6 of \cite{galkowski2021perfectly} which states that
\begin{equation*}
\|P_k^{-1}\|_{L^2(\R^d) \to H^2_k(\R^d)}
\leq
C \|\cutoff R_k \cutoff\|_{L^2(\R^d) \to L^2(\R^d)}.
\end{equation*}
Then, using a usual bootstrap technique, we easily show that
\begin{equation}\label{eq:Bootstrap}
\|P_k^{-1}\|_{H^q_k(\R^d) \to H^{q+2}_k(\R^d)}
\leq
C \|\cutoff R_k \cutoff\|_{L^2(\R^d) \to L^2(\R^d)}
\end{equation}
for all $q \in \N$.

We then need to take care of the weights in the $\HH^q(\R^d)$ norms.
To do so, we observe that if $P_k u = f$,  then we may write
\begin{equation*}
2ik^2 u + \Delta u = g,
\end{equation*}
with
\begin{equation*}
g \eq -2 i k^2 P_k u + (2i k^2P_k u + \Delta u + 2ik^2) u = - 2ik^2(f + Q_k u),
\end{equation*}
where $Q_k$ is differential operator of degree 2 with smooth coefficients
supported in $B(0,R_0)$. Let $\balpha,\bbeta \in \N^d$ with $|\balpha|,|\bbeta| \leq 2$.
Since $Q_k$ is compactly supported, we have
\begin{equation}\label{eq:Controleg}
\left\|\left(\frac{\bx}{R}\right)^{\balpha} g\right\|_{\HH^q_k(\R^d)}
\leq
C_{\balpha} k^2 \left(\|u\|_{H_k^{q+2}(\R^d)}+ \|f\|_{\HH^{q+2}_k(\R^d)}\right).
\end{equation}

Now, we note that
\begin{equation}\label{eq:TFu}
\left( \frac{\bx^{\balpha}}{R}\right)  \partial^{\bbeta}   u
= i^{\alpha - \beta}
\CF^{-1} \left( \left(\frac{1}{R}\partial\right)^{\balpha} \left( \frac{\bxi^{\bbeta} }{-|\bxi|^2 + 2ik^2} \CF(g) \right)\right),
\end{equation}
Let us write
\begin{equation*}
\frac{\bxi^{\bbeta}}{-|\bxi|^2 + 2ik^2}
=
k^{[\bbeta]-2} \frac{(\bxi/k)^{\bbeta}}{-|\bxi/k|^2 + 2i},
\end{equation*}
so that the map
\begin{equation*}
\bxi \mapsto \frac{\bxi^{\bbeta}}{-|\bxi|^2 + 2ik^2}
\end{equation*}
has $C^\ell$ norm bounded by $C_\ell k^{[\bbeta] - 2- \ell} \leq C_\ell k^{[\bbeta] - 2}$,
since $k \geq 1$.

We deduce from \eqref{eq:TFNormePoids} and \eqref{eq:Controleg} that 
\begin{equation*}
\left \| \left(\frac{1}{R}\partial\right)^{\balpha} \left(
\frac{\bxi^{\bbeta}}{-|\bxi|^2 + 2ik^2} \CF( g) \right)
\right \|_{\check{H}^q_k(\R^d)}
\leq
Ck^{[\bbeta]}
\left(\|u\|_{H_k^{q+2}(\R^d)}+ \|f\|_{\HH^{q+2}_k(\R^d)}\right)
\end{equation*}
and hence, thanks to \eqref{eq:TFu},
\begin{multline*}
\|u\|_{\HH^{q+2}_k(\R^d)}
\leq
C
\sum_{[\balpha] + [\bbeta] \leq 2} k^{-[\bbeta]}
\|\left(\frac{\bx}{R}\right)^{\balpha} \partial^{\bbeta} u\|_{\HH_k^p(\R^d)}
\leq
C \sum_{[\balpha] + [\bbeta] \leq 2} k^{-[\bbeta]}
\left\|\mathcal{F} \left (
\left(\frac{\bx}{R}\right)^{\balpha} \partial^{\bbeta} u
\right )\right\|_{\check{H}_k^p(\R^d)}
\\
\leq
C \sum_{[\balpha] + [\bbeta] \leq 2}
\left(\|u\|_{H_k^{q+2}(\R^d)}+ \|f\|_{\HH^{q+2}_k(\R^d)}\right)
\leq
C \|\cutoff R_k \cutoff\|_{L^2(\R^d) \to L^2(\R^d)}  \|f\|_{\HH^{q+2}_k(\R^d)},
\end{multline*}
thanks to \eqref{eq:Bootstrap}.
The first part of the result follows from \eqref{eq_resolvant_helmholtz}.

Now, if we further assume that $f$ is supported in $B(0,R)$,
$g$ is compactly supported and equation \eqref{eq:Controleg}
may be replaced by
\begin{equation*}
\left\|\left(\frac{\bx}{R}\right)^{\balpha} g\right\|_{\HH^q_k(\R^d)}
\leq
C_{\balpha} k^2  \left (\|f\|_{H^q_k(\R^d)} + \|u\|_{H_k^{q+2}(\R^d)} \right ),
\end{equation*}
and the same reasoning as above leads to
\begin{equation*}
\|u\|_{\HH^{q+2}_k(\R^d)}
\leq C  \left(\|u\|_{H_k^{q+2}(\R^d)}+ \|f\|_{H^{q}_k(\R^d)}\right)
\leq C \|\cutoff R_k \cutoff\|_{L^2(\R^d) \to L^2(\R^d)}  \|f\|_{\HH^{q}_k(\R^d)},
\end{equation*}
as announced. \qed
\end{proof}

\begin{lemma}[Boundedness of the energy layer]
For all $\delta \in (0,1)$, the set
\begin{equation*}
\left \{
[\bx,\bxi] \in \R^{2d} \; | \; |p(\bx,\bxi)| \leq \delta
\right \}
\end{equation*}
is bounded.
\end{lemma}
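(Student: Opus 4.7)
The strategy is to compute the principal symbol of $P_k$, then show that $|p|$ is uniformly bounded below by $1$ once $|\bx|$ is large and grows quadratically in $|\bxi|$ on compact sets in $\bx$. Expanding the divergence form of \eqref{eq:PML} and keeping the top-order derivative terms, and using that $\BM$ is real symmetric (hence $(\BI + i\BM)^{-1}$ is complex symmetric), one obtains
\begin{equation*}
p(\bx, \bxi) = \bxi \cdot (\BI + i \BM(\bx))^{-2} \BA(\bx) \bxi - \mu(\bx).
\end{equation*}
Write $q(\bx, \bxi) := \bxi \cdot (\BI + i \BM(\bx))^{-2} \BA(\bx) \bxi$ for the quadratic-in-$\bxi$ part. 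The whole proof reduces to showing a uniform lower bound $|q(\bx,\bxi)| \geq c |\bxi|^2$ for $|\bx| \leq R_0$, together with a direct control outside.

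First, for $|\bx| \geq R_0$ one has $\BM(\bx) = \BI$, $\BA(\bx) = \BI$ and $\mu(\bx) = 1$. The explicit evaluation $(\BI + i\BI)^{-2} = (1+i)^{-2}\BI = -\tfrac{i}{2}\BI$ yields $p(\bx,\bxi) = -\tfrac{i}{2}|\bxi|^2 - 1$, so $|p|^2 = \tfrac14 |\bxi|^4 + 1 \geq 1 > \delta^2$: no point of the sublevel set lies in this region. Next, for $|\bx| \leq R$, one has $\BM(\bx) = 0$ (since $g \equiv 0$ on $[0,R]$) and $\BA$ is real symmetric positive definite and smooth, hence $q(\bx,\bxi) = \bxi \cdot \BA(\bx) \bxi \geq c_{\BA} |\bxi|^2$ uniformly on the compact set $\overline{B(0,R)}$.

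The core of the proof is the transition region $R < |\bx| < R_0$, where $\BA(\bx) = \BI$ (since $\BA \equiv \BI$ outside $B(0, R)$) but $\BM(\bx) \neq 0$. Diagonalise $\BM(\bx) = Q(\bx) D(\bx) Q(\bx)^T$ with $D = \operatorname{diag}(d_1, \ldots, d_d)$ and eigenvalues $d_j = d_j(\bx) \geq 0$, and set $\boldsymbol \zeta := Q(\bx)^T \bxi$, so $|\boldsymbol \zeta| = |\bxi|$. Then
\begin{equation*}
q(\bx, \bxi) = \sum_{j=1}^d (1 + i d_j)^{-2} \zeta_j^2.
\end{equation*}
The coefficients $c_j := (1+i d_j)^{-2}$ satisfy $|c_j| = 1/(1+d_j^2) \geq 1/(1+M_0^2)$, where $M_0 := \sup_{\bx} \|\BM(\bx)\|$ is finite because $g$ is smooth and $\BM$ has explicit eigenvalues $g(|\bx|)/|\bx|$ and $g'(|\bx|)$, and $\arg c_j = -2 \arctan(d_j) \in [-2\arctan(M_0), 0]$. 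The crucial point is that this angular sector has opening $2\arctan(M_0) < \pi$. After rotating by the angle $\arctan(M_0)$, all $c_j$ lie in a sector of half-opening $\arctan(M_0) < \pi/2$, so for any weights $w_j \geq 0$,
\begin{equation*}
\left| \sum_j c_j w_j \right|
\geq
\Re \left( e^{i\arctan(M_0)} \sum_j c_j w_j \right)
\geq
\cos(\arctan(M_0)) \sum_j |c_j| w_j
\geq (1+M_0^2)^{-3/2} \sum_j w_j.
\end{equation*}
Taking $w_j = \zeta_j^2$ yields $|q(\bx,\bxi)| \geq (1+M_0^2)^{-3/2}|\bxi|^2$. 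Combining the three cases, $|q(\bx,\bxi)| \geq c|\bxi|^2$ on $|\bx| \leq R_0$, and then $|p| \leq \delta$ forces $|\bxi|^2 \leq (\delta + \|\mu\|_\infty)/c$, giving the desired boundedness. The main obstacle is the sectoriality argument, which relies crucially on the uniform boundedness of $\BM$: if $\arg c_j$ were allowed to approach $-\pi$, non-negative combinations of the $c_j$ could cancel and drive $|q|$ to zero.
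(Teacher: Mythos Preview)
Your proof is correct and follows the same two-region split as the paper: show $|p|\geq 1>\delta$ for $|\bx|\geq R_0$ by direct computation, and show $|p(\bx,\bxi)|\geq c|\bxi|^2-C'$ for $|\bx|\leq R_0$ to bound $\bxi$. The paper simply \emph{asserts} the latter lower bound without justification; your sectoriality argument for the transition region $R<|\bx|<R_0$ (diagonalise $\BM$, observe that the complex weights $(1+id_j)^{-2}$ lie in a sector of opening $2\arctan(M_0)<\pi$, and use that nonnegative combinations of sectorial coefficients cannot cancel) is a genuine addition that closes this gap explicitly and with an effective constant. One minor remark: your expression for the symbol differs from the paper's by an overall sign, but since only $|p|$ and $|q|$ enter the argument this is immaterial.
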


\begin{proof}
Fix $0 < \delta < 1$, and let $U \eq \{ [\bx,\bxi] \in \R^{2d}; \; |p(\bx,\bxi)| \leq \delta \}$.
Let $\bx \in \R^d$. We first assume that $|\bx| \leq R_0$. Since we know that
\begin{equation*}
C|\bxi|^2-C' \leq |p(\bx,\bxi)| \leq \delta,
\end{equation*}
we see that $\{\bxi\in \R^d \; |p(\bx,\bxi)| \leq \delta \}$
is bounded,  and thus $U \cap (B_{\bx}(0,R_0)\times \R^d)$ is bounded.
On the other hand, if $|\bx| \geq R_0$, we have
\begin{equation*}
p(\bx,\bxi)
=
(1+i)^2 |\bxi|^2-1
=
2i |\bxi|^2-1
\end{equation*}
so that
\begin{equation*}
|p(\bx,\bxi)|^2 = 4|\bxi|^4 + 1 \geq 1 > \delta,
\end{equation*}
which implies that $U \setminus (B_{\bx}(0,R_0)\times \R^d) = \emptyset$. \qed
\end{proof}

We finally show that the right-hand side associated with plane-wave scattering
are indeed well-approximated by Gaussian coherent states in order to apply
Corollary \ref{corollary_approximability} later on.

\begin{lemma}[Approximability of plane-wave right-hand sides]
\label{Lem:ApproxPW}
For $k > 0$, consider the right-hand side
$f_k \eq (-k^2-\Delta)(\chi e^{ik\bd\cdot \bx})$ where
$\chi \in C^\infty_{\rm c}(B_R)$ and $\bd \in \R^d$
with $|\bd| = 1$. Then, there exists $F^k \in \ell^2(\Z^{2d})$
such that
\begin{equation}
\label{eq_estimate_planewave}
R^2
\left \|
f_k - \sum_{[\bm,\bn] \in \Lambda_{k,{\rm rhs}}} F^k_{\bm,\bn} \gskmn
\right \|_{\HH^p_k(\R^d)}
\leq
C_{\varepsilon,p,m} (kR)^{-m},
\end{equation}
for all $m \in \N$, where
\begin{equation*}
\Lambda_{k,{\rm rhs}}^{\prime}
\eq
\left \{
[\bm, \bn]\in \Z^{2d}
\; | \;
\mathrm{dist}\left(\bx^{k,\bm}, \mathrm{supp}(\chi)\right) \leq (kR)^{-1/2+\varepsilon}
\text{ and }
\left||\bxi^{k,\bn}|-1\right| \leq  (kR)^{-1/2+\varepsilon}
\right \}.
\end{equation*}
In particular, if $p$ is the symbol of the function appearing in section \ref{sec:PML} and if
$\chi$ is supported in the region where $p(\bx,\bxi) = |\bxi|^2-1$, then
$\Lambda_{k,{\rm rhs}}^{\prime}$ is of the the same form as $\Lambda_{k,{\rm rhs}}$
introduced in \eqref{eq:FormeRHS} with $\alpha=0$.
\end{lemma}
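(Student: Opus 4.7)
The natural candidate is $F^k_{\bm,\bn}\eq (f_k,\gskmn^\star)$ for $[\bm,\bn]\in\Lambda_{k,\rm rhs}^{\prime}$, and $F^k_{\bm,\bn}\eq 0$ otherwise. The $\ell^2$ bound on $F^k$ is immediate since the dual family is itself a frame (Remark \ref{rem:BoundedUtile}): $\|F^k\|_{\ell^2(\Z^{2d})}\leq\gamma^{1/2}\|f_k\|_{L^2(\R^d)}=\gamma^{1/2}\|\widetilde\chi\|_{L^2(\R^d)}$, independent of $k$. Using the Gabor expansion \eqref{eq_gabor_expansion}, the error takes the form $E_k\eq\sum_{[\bm,\bn]\notin\Lambda_{k,\rm rhs}^{\prime}}(f_k,\gskmn^\star)\gskmn$, and by \eqref{eq_norm_gskmn} its $\HH^p_k(\R^d)$ norm is controlled by $\sum|(f_k,\gskmn^\star)|(1+|[\bm,\bn]|)^p$. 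Thus the proof reduces to establishing
\[
|(f_k,\gskmn^\star)|\leq C_{\varepsilon,m,N}(kR)^{-m}(1+|[\bm,\bn]|)^{-N}
\]
for every $m,N\in\N$ and every $[\bm,\bn]\notin\Lambda_{k,\rm rhs}^{\prime}$.

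To eliminate the dual frame, I apply Proposition \ref{proposition_tight_expansion} to write $\gskmn^\star$ as a linear combination of primal Gaussian states $\gskmnp$ with $|[\bm,\bn]-[\bm',\bn']|\leq(kR)^{\varepsilon/2}$, up to a super-algebraic tail. Since the lattice spacing is $\sqrt{\pi/(kR)}$, such a neighborhood has phase-space diameter $O((kR)^{-1/2+\varepsilon/2})\ll(kR)^{-1/2+\varepsilon}$. Consequently, if $[\bm,\bn]\notin\Lambda_{k,\rm rhs}^{\prime}$, every nearby $[\bm',\bn']$ still satisfies $\dist(\xxkmp,\mathrm{supp}(\widetilde\chi))\geq \tfrac{1}{2}(kR)^{-1/2+\varepsilon}$ or $||\xiknp|-1|\geq\tfrac{1}{2}(kR)^{-1/2+\varepsilon}$, and it suffices to prove the same super-algebraic bound for $|(f_k,\gskmnp)|$ under one of these two alternatives.

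Direct computation, after the substitution $\by\eq\bx-\xxkmp$, yields
\[
(f_k,\gskmnp)=\left(\frac{k}{\pi}\right)^{d/4}e^{ik\bd\cdot\xxkmp}\int\widetilde\chi(\by+\xxkmp)\,e^{ik(\bd+\xiknp)\cdot\by}\,e^{-k|\by|^2/2}\,\mathrm{d}\by.
\]
In the \emph{spatial} alternative, the integrand vanishes unless $|\by|\geq c(kR)^{-1/2+\varepsilon}$, so the Gaussian factor is dominated by $\exp(-c'(kR)^{2\varepsilon}/R)$, yielding super-algebraic decay. In the \emph{frequency} alternative, $|\bd+\xiknp|\geq||\xiknp|-1|\geq c(kR)^{-1/2+\varepsilon}$, and I exploit the identity $(1-k^{-1}\Delta_{\by})e^{ik(\bd+\xiknp)\cdot\by}=(1+k|\bd+\xiknp|^2)e^{ik(\bd+\xiknp)\cdot\by}$. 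Integrating by parts $n$ times brings out a factor $(1+k|\bd+\xiknp|^2)^{-n}\leq(1+c(kR)^{2\varepsilon})^{-n}$ at the cost of applying $(1-k^{-1}\Delta_{\by})^n$ to $\widetilde\chi(\by+\xxkmp)e^{-k|\by|^2/2}$; after the rescaling $\by\mapsto k^{-1/2}\by$, the resulting operator has $L^1$ norm growing only polynomially in $k$, which is dominated by $(kR)^{-2\varepsilon n}$ for $n$ sufficiently large.

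The main technical hurdle is the frequency case, where one must carefully verify that the polynomial-in-$k$ blow-up from differentiating the narrow Gaussian $e^{-k|\by|^2/2}$ is beaten by the integration-by-parts gain. Polynomial decay in $|[\bm',\bn']|$ is obtained by observing that large $|\xxkmp|$ eventually falls into the spatial regime with super-exponential Gaussian decay, while large $|\xiknp|$ permits still more integration-by-parts iterations. The closing claim is a direct verification: on $\mathrm{supp}(\widetilde\chi)$ one has $\mu=1$, $\BA=\BI$, and $\BM=\bzero$, so the principal symbol reduces to $p(\xxkm,\xikn)=|\xikn|^2-1$, and for $|\xikn|$ near $1$ the inequalities $|p(\xxkm,\xikn)|\leq 2(kR)^{-1/2+\varepsilon}$ and $||\xikn|-1|\leq(kR)^{-1/2+\varepsilon}$ agree up to the bounded factor $||\xikn|+1|\approx 2$, so $\Lambda_{k,\rm rhs}^{\prime}$ is contained in (and has the same qualitative form as) $\Lambda_{k,\rm rhs}$ with $\alpha=0$.
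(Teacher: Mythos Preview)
Your proof is correct and follows essentially the same strategy as the paper: estimate $|(f_k,\gskmnp)|$ for primal states outside $\Lambda'_{k,\rm rhs}$ by an oscillatory-integral argument, then transfer to the dual frame via Proposition~\ref{proposition_tight_expansion}. Two minor presentational differences: the paper handles the oscillatory integral in a single stroke by writing $(f_k,\gskmn)=\int\chi\,e^{ik\varphi_{\bm,\bn}}$ with the \emph{complex} phase $\varphi_{\bm,\bn}(\bx)=\bx\cdot(\bd-\xikn)+\tfrac{i}{2}|\bx-\xxkm|^2$ (whose gradient has modulus $\geq||\xikn|-1|+\dist(\xxkm,\operatorname{supp}\chi)$) and invoking non-stationary phase, whereas you unpack this into separate spatial and frequency cases; and for the final summation the paper first truncates to $|[\bm,\bn]|\le k$ using \cite[Theorem~3.1]{chaumontfrelet_ingremeau_2022a} and then sums the uniform $C_m(kR)^{-m}$ bound over the remaining polynomially many indices, while you instead extract additional polynomial decay in $|[\bm,\bn]|$ and sum directly---your route is slightly more self-contained but needs the extra bookkeeping you sketch at the end.
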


\begin{proof}
We start by observing that
\begin{equation*}
f_k
=
-(\Delta \chi + 2ik\bd\cdot\grad \chi)e^{ik\bd\cdot\bx}
=
-R^{-2}(R^2\Delta \chi + 2ikR(R \bd\cdot\grad \chi))
e^{ik\bd\cdot\bx},
\end{equation*}
so that the result holds true if we can show \eqref{eq_estimate_planewave} for
$\tf_k \eq \tchi e^{ik\bd\cdot\bx}$ with $\tchi$ smooth and compactly supported
(without the factor $R^2$ in front of the left-hand side).

From now on, we fix $\tchi$, $p$, $m$ and $\varepsilon$,
and consider $\tf_k$ as above. We will first show that, if
$[\bm,\bn] \notin \Lambda_{k,{\rm rhs}}^{\prime}$, then we have
\begin{equation}
\label{eq:DecayCoefPW}
\left| (\tf_k, \Psi_{k,\bm,\bn})\right| \leq C_{m}  (kR)^{-m}
\quad \forall m \in \N.
\end{equation}
The quantity $(\tf_k, \Psi_{k,\bm,\bn})$ is of the form
\begin{equation*}
\int_{\R^d} \tchi(\bx) e^{ik \varphi_{\bm,\bn}(\bx)}\mathrm{d}\bx,
\end{equation*}
with 
\begin{equation*}
\varphi_{\bm,\bn}(\bx)
=
\bx \cdot \left(\bxi - \bxi^{k,\bn}\right)
+
\frac{i}{2}|\bx-\bx^{k,\bm}|^2,
\end{equation*}
so that
\begin{equation*}
\nabla \varphi_{\bm,\bn}(\bx) = \bxi - \bxi^{k,\bn}+ i (\bx- \bx^{\bm,k}).
\end{equation*}
In particular, we have
\begin{equation*}
|\nabla \varphi_{\bm,\bn}(\bx)|
\geq
\left||\bxi^{k,\bn}|-1\right|
+
\mathrm{dist}\left (\bx^{k,\bm}, \mathrm{supp}(\chi)\right )
\geq
\frac{1}{2} (kR)^{-\frac{1}{2}+ \varepsilon}.
\end{equation*}
we may use the method of non-stationary phase (i.e., integrate by parts several times,
as in \cite[Lemma 3.14]{zworski2012semiclassical}) to deduce \eqref{eq:DecayCoefPW}.

Now, combining \eqref{eq:DecayCoefPW} with Proposition
\ref{proposition_tight_expansion}, we see that there exists $C$
such that, for any $[\bm,\bn]\in \Lambda_{k,{\rm rhs}}$, we have
\begin{equation}
\label{eq:DecayPWStar}
\left | (\tf_k, \gskmn^\star) \right | \leq C_{m} (kR)^{-m} \quad \forall m \in \N.
\end{equation}
On the other hand, it follows from \cite[Theorem 3.1]{chaumontfrelet_ingremeau_2022a} that we have
\begin{equation}\label{eq:Rappel:TheoetMaxSontTropCools}
\left \|
\tf_k
-
\sum_{\substack{[\bm, \bn]\in \Z^{2d}\\{|\bm,\bn]| \leq k}}} (f_k,\gskmn^\star) \gskmn
\right\|_{\HH_k^p(\R^d)}
\leq
C_{p,m} (kR)^{-m}.
\end{equation}
Writing 
\begin{align*}
\tf_k
&=
\sum_{[\bm, \bn]\in \Lambda_{k,  \rm rhs}} (\tf_k,\gskmn^\star) \gskmn
+
\sum_{\substack{[\bm, \bn]\in \Z^{2d}\setminus \Lambda_{k,  \rm rhs}\\{|\bm,\bn]| \leq k}}}
(\tf_k,\gskmn^\star) \gskmn
\\
&+
\left (
\tf_k
-
\sum_{\substack{[\bm, \bn]\in \Z^{2d}\\{|\bm,\bn]| \leq k}}} (f_k,\gskmn^\star)
\gskmn \right ),
\end{align*}
we deduce from \eqref{eq:DecayPWStar} and \eqref{eq:Rappel:TheoetMaxSontTropCools}
that the last two terms have a $\HH_k^p$ norm bounded by $C_{\varepsilon,p,m} (kR)^{-m}$,
and the result follows. \qed
\end{proof}

\subsection{Approximability estimates}

We are now ready to present our approximability estimates for the Helmholtz problem.
We start with an approximation result for general right-hand sides that does not hinge
on Section \ref{sec:setting}, but rather on standard results on Gabor frames and modulation
spaces, see e.g. \cite[Chapter 11]{Gro}. We also refer the reader to the research note
\cite{chaumontfrelet_ingremeau_2022a} where elementary proofs are presented.

Recall that $N \geq 1$ was introduced in \eqref{eq_resolvant_helmholtz} and that $kR \geq 1$.

\begin{theorem}[Approximability at a fixed frequency]
\label{Th:ApproxKFixed}
Assume $f \in \HH^p(\R^d)$ with $\operatorname{supp} f \subset B(0,R)$. If
\begin{equation*}
\Lambda \eq \left \{
[\bm,\bn] \in \Z^{2d}
\; | \;
|[\bm,\bn]| \leq \sqrt{\rho kR}
\right \},
\end{equation*}
for some $\rho>0$, then we have
\begin{equation*}
k^2\left \|
u-\sum_{[\bm,\bn] \in \Lambda} (u,\gskmn^\star)\gskmn
\right \|_{\HH^q_k(\R^d)}
\leq
C_p
(kR)^{N-(2+p-q)}
\rho^{-(2+p-q)/2} \|f\|_{\HH_k^p(\R^d)}
\end{equation*}
for all $p,q \in \N$ with $q \leq p$.
\end{theorem}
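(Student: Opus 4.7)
The plan is to combine two ingredients: (i) the elliptic regularity provided by the PML resolvent estimate \eqref{eq:ResolvantePoly}, which promotes $f\in\HH^p_k(\R^d)$ to $u\in\HH^{p+2}_k(\R^d)$ at the cost of a factor $(kR)^N$, and (ii) a Gabor-frame tail estimate that converts this extra regularity into polynomial decay of the approximation error once the lattice is truncated.

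First, since $\operatorname{supp} f\subset B(0,R)$, the second part of \eqref{eq:ResolvantePoly} directly gives
\begin{equation*}
\|u\|_{\HH^{p+2}_k(\R^d)} \leq C(kR)^N \|f\|_{\HH^p_k(\R^d)}.
\end{equation*}
The extra two derivatives gained here are exactly what we will trade against the truncation radius, and the appearance of $(kR)^N$ above is precisely why the cutoff in $\Lambda$ is scaled as $\sqrt{\rho\,(kR)^N}$ rather than just $\sqrt{\rho}$.

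Second, I would appeal to a general tail bound for Gaussian coherent state expansions in weighted norms (following the machinery of \cite{chaumontfrelet_ingremeau_2022a}): for any $s\geq q$ and any $M>0$,
\begin{equation*}
\left\|v - \sum_{|[\bm,\bn]|\leq M}(v,\gskmn^\star)\gskmn\right\|_{\HH^q_k(\R^d)} \leq C\, M^{-(s-q)}\,\|v\|_{\HH^s_k(\R^d)}.
\end{equation*}
This ultimately comes from \eqref{eq_norm_gskmn}, the polynomial growth $\|\gskmn\|_{\HH^s_k}\lesssim (1+|[\bm,\bn]|)^s$, together with the dual statement that the weighted $\ell^2$-norm $\sum(1+|[\bm,\bn]|)^{2s}|(v,\gskmn^\star)|^2$ is controlled by $\|v\|_{\HH^s_k}^2$; splitting index pairs by $|[\bm,\bn]|\gtrless M$ yields the claimed decay. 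The special case \eqref{eq:Rappel:TheoetMaxSontTropCools} invoked later in the paper is a manifestation of this for plane-wave right-hand sides.

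Third, I would apply the tail bound to $v=u$ with $s=p+2$ and $M=\sqrt{\rho(kR)^N}$ (so that $|[\bm,\bn]|\leq M$ is exactly the index set $\Lambda$), and combine it with the resolvent estimate to obtain
\begin{equation*}
\left\|u - \sum_{[\bm,\bn]\in\Lambda}(u,\gskmn^\star)\gskmn\right\|_{\HH^q_k(\R^d)} \leq C\,\rho^{-(p+2-q)/2}(kR)^{N-N(p+2-q)/2}\|f\|_{\HH^p_k(\R^d)}.
\end{equation*}
The exponent of $kR$ collapses to $N(q-p)/2\leq 0$ under the hypothesis $q\leq p$, and since $kR\geq 1$ this extra factor is bounded by $1$. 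The announced estimate $C_p\rho^{-(2+p-q)/2}\|f\|_{\HH^p_k(\R^d)}$ follows.

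The main obstacle is the second step: making the passage between the weighted $\HH^s_k$-norm and a weighted $\ell^2$-sum of dual-frame coefficients fully quantitative, despite the fact that $(\gskmn)$ is only a frame (not a Riesz basis), so that cross terms $(\gskmn,\gskmnp^\star)$ must be handled via their off-diagonal decay. This is the technical heart of \cite{chaumontfrelet_ingremeau_2022a}; once available, the present theorem reduces to bookkeeping of the exponents in the two ingredients above.
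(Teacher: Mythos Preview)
Your proposal is essentially the paper's own argument: invoke the resolvent estimate \eqref{eq:ResolvantePoly} to gain two orders of regularity at the price of $(kR)^N$, then feed this into the Gabor tail bound from \cite[Theorem~3.1]{chaumontfrelet_ingremeau_2022a}, and finally check that the powers of $kR$ are non-positive. The only cosmetic discrepancy is that the paper phrases the cited tail bound via the \emph{physical} cutoff $D=(kR/\pi)^{-1/2}\sqrt{\rho(kR)^N}$ (i.e.\ the radius in $(\bx,\bxi)$-space rather than the index radius $M$), which shifts the intermediate exponents but leads to the same final inequality.
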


\begin{proof}
Let us set $D \eq (kR/\pi)^{1/2} (\rho kR)^{1/2} = \sqrt{\rho/\pi}(kR)$.
We start with \cite[Theorem 3.1]{chaumontfrelet_ingremeau_2022a}, showing that
\begin{equation*}
\left \|
u-\sum_{[\bm,\bn] \in \Lambda} (u,\gskmn^\star)\gskmn
\right \|_{\HH^q_k(\R^d)}
\leq
C_p D^{q-p-2}
\|u\|_{\HH^{p+2}_k(\R^d)}.
\end{equation*}
The result then follows using \eqref{eq:ResolvantePoly}, since
\begin{multline*}
D^{q-p-2} k^2\|u\|_{\HH^{p+2}_k(\R^d)}
\leq
C_p (\sqrt{\rho} kR)^{q-p-2}
k^2\|u\|_{\HH^{p+2}_k(\R^d)}
\\
\leq
C_p (\sqrt{\rho} kR)^{q-p-2}
(kR)^{N}
\|f\|_{\HH^{p}_k(\R^d)}
\\
\leq
C_p
(kR)^{N+q-p-2}
\rho^{(q-p-2)/2}
\|f\|_{\HH^{p}_k(\R^d)}.
\qed
\end{multline*}
\end{proof}

Our second approximability estimate applies specifically to high-frequency scattering
problems. It is a direct consequence of Theorem \ref{theorem_approximability} and
Lemma \ref{Lem:ApproxPW}.

\begin{theorem}[Approximability in the high-frequency regime]
\label{Theo:ApproxHF}
Fix $0<\varepsilon< 1/2$ and consider the index set
\begin{equation*}
\Lambda
\eq
\left \{
[\bm,\bn] \in \Z^{2d} \; | \;
|p(\bm,\bn)| \leq (kR)^{-1/2+\varepsilon}
\right \}.
\end{equation*}
Then, if the right-hand side is of the form $f_k \eq (-k^2-\Delta)(\chi e^{ik\bd\cdot\bx})$
where $\bd \in \R^d$ with $|\bd| = 1$ and $\chi \in C^\infty_{c}(B_R)$ is supported in the
region where $\mu = 1$ and $\BA = \BI$, for all $q \in \N$, we have
\begin{equation*}
\left \|
u_k-\sum_{[\bm,\bn] \in \Lambda} (u_k,\gskmn^\star)\gskmn
\right \|_{\HH^q_k(\R^d)}
\leq
C_{\varepsilon,q,m} (kR)^{-m}
\quad
\forall m \in \N.
\end{equation*}
\end{theorem}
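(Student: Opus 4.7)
The result is an immediate combination of Lemma \ref{Lem:ApproxPW} with Corollary \ref{corollary_approximability}. The plan is to cast the PML Helmholtz problem in the abstract framework of Section \ref{ssec:general} through the identification $\hbar = (kR)^{-1}$, $P_\hbar = P_k$; the two required hypotheses, namely the polynomial resolvent estimate \eqref{eq_polynomial_resolvant} and the boundedness of the energy layer \eqref{eq_assumption_symbol_bounded}, have just been verified in the two preceding lemmas of this section.

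I would then apply Lemma \ref{Lem:ApproxPW} to produce $F^k \in \ell^2(\Z^{2d})$ supported in $\Lambda_{k,\rm rhs}'$ such that $f_k$ is approximated by the Gaussian combination $\sum F^k_{\bm,\bn}\gskmn$ to infinite polynomial order in $(kR)$ in any $\HH_k^p$ norm. The uniform bound $\|F^k\|_{\ell^2(\Z^{2d})} \leq C$ follows from the dual-frame inequality of Remark \ref{rem:BoundedUtile} applied to the $k$-uniformly $L^2$-bounded family $f_k$ (indeed $\|f_k\|_{L^2(\R^d)} = \|\widetilde \chi\|_{L^2(\R^d)}$). Crucially, the hypothesis that $\widetilde\chi$ is supported in the region where $\mu=1$ and $\BA=\BI$ reduces the principal symbol on that support to $p(\bx,\bxi) = |\bxi|^2-1$, so that the final assertion of Lemma \ref{Lem:ApproxPW} rewrites $\Lambda_{k,\rm rhs}'$ in the form \eqref{eq:FormeRHS} with $\alpha=0$, matching the input of Corollary \ref{corollary_approximability}.

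Corollary \ref{corollary_approximability} then directly yields super-polynomial approximation of $u_k$ by the Gaussian combination indexed by $\Lambda_{\hbar,\rm sol} = \{|p_\hbar(\xxhm,\xihn)| \leq 4\hbar^{1/2-\varepsilon}\}$. Translating back via $\hbar = (kR)^{-1}$ and absorbing the constant $4$ into a slightly smaller exponent $\varepsilon$, this is exactly the set $\Lambda$ of the theorem statement. Since $f_k$ is smooth and compactly supported, the parameter $p$ in the corollary may be taken arbitrarily large, so the bound holds in $\HH_k^q$ for any $q \in \N$.

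In my view the only delicate point is the identification of $\Lambda_{k,\rm rhs}'$ with an index set of the abstract form \eqref{eq:FormeRHS}; this is only possible because $\widetilde\chi$ is localized away from the region where the PML modifications perturb the symbol, so that the micro-support of the plane-wave right-hand side genuinely lies on the classical characteristic variety $\{|\bxi|=1\}$. Everything else is routine bookkeeping of constants and parameters.
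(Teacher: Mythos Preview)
Your proposal is correct and matches the paper's own treatment: the paper states that Theorem~\ref{Theo:ApproxHF} is ``a direct consequence of Theorem~\ref{theorem_approximability} and Lemma~\ref{Lem:ApproxPW}'', and your route through Corollary~\ref{corollary_approximability} (which is the appropriate bridge when $f_k$ is only \emph{approximated} by a Gaussian combination) together with the verification of the abstract hypotheses via the two preceding lemmas is exactly this. Your handling of the $\ell^2$ bound on $F^k$ via Remark~\ref{rem:BoundedUtile} and the absorption of the constant $4$ into $\varepsilon$ are the right bookkeeping details.
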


\subsection{A least-squares method}
\label{section_least_squares}

In this section, we introduce a least-squares method based on Gaussian coherent states.
For a finite set $\Lambda\subset \Z^{2d}$, we consider the discretization space
\begin{equation*}
W_\Lambda
\eq
\mathrm{Vect} \left\{ \gskmn; \; [\bm,\bn] \in \Lambda \right\}.
\end{equation*}
Then, the least squares method consists in finding
$u_\Lambda \in W_\Lambda$ such that
\begin{equation}
\label{eq_least_squares}
\left( P_k u_{\Lambda},P_k w_\Lambda \right) = \left( f, P_k w_{\Lambda} \right)
\end{equation}
for all $w_\Lambda \in W_\Lambda$. Simple manipulations reveal
that \eqref{eq_least_squares} is the Euler-Lagrange equation corresponding
to the minimization problem
\begin{equation}
\|P_k(u-u_\Lambda)\|_{L^2(\R^d)}
=
\min_{w_\Lambda \in W_\Lambda} \|P_k(u-w_\Lambda)\|_{L^2(\R^d)}.
\end{equation}
Since $W_\Lambda$ is finite dimensional, the cost functional is quadratic,
and $P_k$ is an isomorphism, $u_\Lambda$ is uniquely defined.

Using Theorem \ref{Th:ApproxKFixed}  we can easily show that the method
converges for any fixed frequency, if sufficiently many DOFs per wavelength are employed,
as stated in the following corollary:

\begin{corollary}[Convergence at a fixed frequency]
\label{corollary_least_squares_fixed}
Consider the set
\begin{equation*}
\Lambda_{\rho} \eq
\left \{
[\bm,\bn] \in \Z^{2d}
\; | \;
|\bm|^2 + |\bn|^2 \leq \rho kR
\right \},
\end{equation*}
then the following error estimates holds true:
\begin{equation*}
k^2\|u-u_\Lambda\|_{L^2(\R^d)}
\leq
C_p
(kR)^{N-p}
\rho^{-p/2}  \|f\|_{\HH^p(\R^d)}
\quad
\forall p \in \N.
\end{equation*}
\end{corollary}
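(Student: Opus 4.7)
The plan is to combine the quasi-optimality of the least-squares formulation with the approximation bound of Theorem \ref{Th:ApproxKFixed}, and then to invert $P_k$ once via the resolvent estimate in order to pass from a residual bound to an error bound.

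First, I would use the standard fact that $u_\Lambda$, being the minimiser of $w \mapsto \|P_k(u-w)\|_{L^2(\R^d)}$ over $W_{\Lambda_\rho}$, satisfies the quasi-optimality inequality
$$
\|P_k(u-u_\Lambda)\|_{L^2(\R^d)} \leq \|P_k(u-w)\|_{L^2(\R^d)} \qquad \forall w \in W_{\Lambda_\rho}.
$$
The natural test function is the truncated Gabor expansion $w_\star \eq \sum_{[\bm,\bn]\in \Lambda_\rho}(u,\gskmn^\star)\gskmn$, which lies in $W_{\Lambda_\rho}$ by construction. Since $P_k$ is a second-order operator with uniformly smooth bounded coefficients and a $k^{-2}$ prefactor on the top-order terms, it maps $H^2_k(\R^d) \supset \HH^2_k(\R^d)$ continuously into $L^2(\R^d)$ with constant independent of $k$. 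Combining this continuity with Theorem \ref{Th:ApproxKFixed} applied for $q=2$ then gives
$$
\|P_k(u-u_\Lambda)\|_{L^2(\R^d)}
\leq C\|u-w_\star\|_{\HH^2_k(\R^d)}
\leq C_p \rho^{-p/2}\|f\|_{\HH^p_k(\R^d)}.
$$

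Second, to recover a bound on $u-u_\Lambda$ itself, I would invoke the bootstrapped resolvent estimate \eqref{eq:Bootstrap}. Setting $g \eq P_k(u-u_\Lambda) \in L^2(\R^d)$, the difference $u-u_\Lambda$ is the unique $L^2$-solution of $P_k v = g$, and hence
$$
\|u-u_\Lambda\|_{H^2_k(\R^d)} \leq C(kR)^N \|g\|_{L^2(\R^d)} \leq C_p(kR)^N \rho^{-p/2}\|f\|_{\HH^p_k(\R^d)}.
$$

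Finally, it remains to convert between the semiclassical norms $H^2_k$, $\HH^p_k$ and the unweighted $H^2$, $\HH^p$ norms appearing in the statement. Since $k$ is fixed and $kR \geq 1$, the $k^{-2[\balpha]}$ factors in the definitions yield $\|f\|_{\HH^p_k(\R^d)} \leq \|f\|_{\HH^p(\R^d)}$ and $\|v\|_{H^2(\R^d)} \leq k^2 \|v\|_{H^2_k(\R^d)}$, so all residual $k$-dependent factors can simply be absorbed into $C_p$. I expect this last step of norm bookkeeping to be the only mild subtlety; the two analytic ingredients, Theorem \ref{Th:ApproxKFixed} and the resolvent estimate \eqref{eq:Bootstrap}, already do all the work, and no new technical idea is required.
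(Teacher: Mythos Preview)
Your proposal is correct and follows precisely the approach the paper implies: the paper does not spell out a proof of this corollary, merely citing Theorem~\ref{Th:ApproxKFixed} as the key ingredient, and your combination of least-squares quasi-optimality, the approximation bound of Theorem~\ref{Th:ApproxKFixed} with $q=2$, and the resolvent estimate~\eqref{eq:Bootstrap} is exactly the intended argument. The only minor caveat is that Theorem~\ref{Th:ApproxKFixed} requires $q\leq p$, so your application with $q=2$ formally needs $p\geq 2$; the cases $p=0,1$ are either trivial or can be absorbed by renaming constants, so this does not affect the substance of the proof.
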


Relying on Theorem \ref{Theo:ApproxHF} instead, we can show a refined error
estimate in the high-frequency regime.

\begin{corollary}[Convergence in the high-frequency regime]
Fix $0 < \varepsilon < 1/2$,
\begin{equation*}
\Lambda \eq
\left \{
[\bm,\bn] \in \Z^{2d} \; | \;
|p(\xxkm,\xikn)|
\leq
(kR)^{-1/2+\varepsilon}
\right \},
\end{equation*}
and let $f_k \eq (-k^2-\Delta)(\chi e^{ik\bd\cdot\bx)}$ with $\chi \in C^\infty_{\rm c}(B_R)$
supported in the region where $\mu = 1$ and $\BA = \BI$.  Then, if $kR$ is sufficiently large,
we have
\begin{equation*}
k^2\|u-u_\Lambda\|_{L^2(\R^d)}
\leq
C_{\varepsilon,m} (kR)^{-m}
\qquad
\forall m \in \N.
\end{equation*}
\end{corollary}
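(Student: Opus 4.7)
The plan is to combine the approximability estimate of Theorem \ref{Theo:ApproxHF} with the quasi-optimality of the least-squares projection and the polynomially-bounded resolvent estimate of Section \ref{sec:helmholtz}. The normal equations \eqref{eq_least_squares} express $u_\Lambda$ as the orthogonal projection of $u$ onto $W_\Lambda$ in the inner product $(P_k\cdot,P_k\cdot)$, so that
\[
\|P_k(u - u_\Lambda)\|_{L^2(\R^d)} = \min_{w_\Lambda \in W_\Lambda} \|P_k(u - w_\Lambda)\|_{L^2(\R^d)}.
\]

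I would first bound the right-hand side by choosing $w_\Lambda^\star := \sum_{[\bm,\bn]\in\Lambda}(u,\gskmn^\star)\gskmn$. The assumption on the support of $\widetilde\chi$ guarantees that $f_k$ satisfies the hypothesis of Theorem \ref{Theo:ApproxHF}, and so $\|u - w_\Lambda^\star\|_{\HH^q_k(\R^d)} \leq C_{\varepsilon,q,m}(kR)^{-m}$ for arbitrary $q,m \in \N$. Because $P_k$ has smooth coefficients and the $k^{-2}$ factor on its principal part matches the $k^{-2[\balpha]}$ weights of the $\HH^q_k$-norm, $P_k$ maps $\HH^{q+2}_k$ into $\HH^q_k$ with operator norm independent of $k$, so $\|P_k(u - w_\Lambda^\star)\|_{L^2} \leq C_{\varepsilon,m}(kR)^{-m}$. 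Quasi-optimality then transfers the same bound to $\|P_k(u - u_\Lambda)\|_{L^2}$. Applying the bound $\|P_k^{-1}\|_{L^2\to H^2_k}\leq C(kR)^N$ invoked in the proof of the resolvent lemma yields
\[
\|u - u_\Lambda\|_{H^2_k(\R^d)} \leq C(kR)^N \|P_k(u - u_\Lambda)\|_{L^2(\R^d)} \leq C_{\varepsilon,m}(kR)^{-m},
\]
after harmlessly redefining $m$.

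To upgrade from $H^2_k$ to the weighted norm $\HH^2_k$ appearing in the statement, I would split $u - u_\Lambda = (u - w_\Lambda^\star) + (w_\Lambda^\star - u_\Lambda)$. The first piece is controlled directly by Theorem \ref{Theo:ApproxHF}. For the second, one notes that $\Lambda$ lies inside the bounded energy layer (by the boundedness lemma established earlier in this section), so every coherent state $\gskmn$ with $[\bm,\bn]\in\Lambda$ has uniformly bounded $\HH^q_k$-norm thanks to \eqref{eq_norm_gskmn}. Since $|\Lambda|$ grows only polynomially in $k$, this yields on the finite-dimensional space $W_\Lambda$ an inverse inequality of the form $\|v\|_{\HH^2_k} \leq C(kR)^M\|v\|_{L^2}$ for some fixed $M$. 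Combined with the $L^2$ bound on $w_\Lambda^\star - u_\Lambda$ coming from the previous step and from $\|u - w_\Lambda^\star\|_{L^2}\leq C_{\varepsilon,m}(kR)^{-m}$, this closes the argument, the loss of the polynomial factor $(kR)^M$ being inconsequential in the superalgebraic regime.

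The main obstacle is precisely this last step: transferring the bound from the standard $k$-weighted Sobolev norm to the $\HH^2_k$ norm with spatial weights. The key ingredients are the confinement of $\Lambda$ within the energy layer and the uniform control of $\|\gskmn\|_{\HH^q_k}$ over $[\bm,\bn]\in\Lambda$, both already available. Everything else consists of assembling Theorem \ref{Theo:ApproxHF}, the quasi-optimality of \eqref{eq_least_squares}, and the resolvent bound \eqref{eq:ResolvantePoly}.
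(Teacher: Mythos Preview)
Your proposal is correct and follows essentially the same approach as the paper, which presents this corollary as an immediate consequence of Theorem~\ref{Theo:ApproxHF} together with the least-squares quasi-optimality and the resolvent estimate, without spelling out a detailed proof. Your discussion of the passage from the unweighted $H^2_k$ norm to the weighted $\HH^2_k$ norm via an inverse inequality on $W_\Lambda$ is more careful than what the paper indicates; note that to make the inverse inequality fully rigorous you should use the dual-frame stability bound of Remark~\ref{rem:BoundedUtile} (i.e.\ $\sum |(v,\gskmn^\star)|^2 \leq \gamma\|v\|_{L^2}^2$) together with the uniform bound \eqref{eq_norm_gskmn} on $\|\gskmn\|_{\HH^2_k}$ for $[\bm,\bn]\in\Lambda$, rather than relying on the mere finiteness of $|\Lambda|$.
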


\section{Numerical results}
\label{sec:numerics}

In this section, we provide numerical illustrations of the above theory
in the one-dimensional case.
The purpose of these examples is simply to illustrate and support our theoretical findings.
Extension to higher dimensions and discussion about efficient linear system assembly and
solve will be reported elsewhere.

\subsection{Setting}

We consider the one dimensional case, where the differential operator in \eqref{eq:PML}
simplifies to
\begin{equation}\label{eq:helm}
P_k v = -\mu\nu u - k^{-2} (\alpha \nu^{-1} v')'
\end{equation}
where $\mu,\alpha > 0$ are smooth ``physical'' coefficients, and $\nu$
is the PML scaling. For the sake of simplicity we take $R \eq 1$, meaning
that $\mu = \alpha = 1$ outside of $(-1,1)$ and that our right-hand sides
$f$ will be supported in $(-1,1)$. Here, we select $\nu \eq 1 + i\sigma$
where $\sigma$ is a stretching function defined as
\begin{equation*}
\sigma(x)
\eq
a\left (\mathbf{1}_{(1,+\infty)} (x-1)^r + \mathbf{1}_{(-\infty,-1)}(-1-x)^r \right )
\end{equation*}
with $a \eq 1/10$ and $r \eq 4$. Notice that this choice slightly departs from our
theoretical framework as the coefficients are only $C^3$ and not $C^\infty$.

In all our experiments, the grid in phase space is chosen as
\begin{equation*}
x^{k,m} = \sqrt{k^{-1}\pi}m, \quad \xi^{k,n} = \sqrt{k^{-1}\pi}n,
\qquad
n,m \in \Z
\end{equation*}
and for a given values of $k$ and $\delta$, the set of indices
is taken to be
\begin{equation*}
\Lambda \eq \left \{
(m,n) \in \Z^2 \; | \; |p(x^{k,m},\xi^{k,n})| < \delta
\right \}.
\end{equation*}

\subsection{Homogeneous medium with analytical solution}

Our first example concerns the case where $\alpha = \mu = 1$.
The right-hand side is given by
\begin{equation*}
\label{eq_numerics_fk}
f_k \eq P_k(\phi e^{ikx}) = (\phi'' + 2ik\phi')e^{ikx},
\end{equation*}
where $\phi$ is the only even function in $C^3(\R^d)$ such that
$\phi = 0$ on $(-\infty,-3/4)$, $\phi = 1$ on $(-1/2,0)$, and
$\phi$ is a polynomial of degree $7$ in $(-3/4,-1/2)$,
and the associated solution is $u_k(x) \eq \phi(x) e^{ikx}$.
Results are reported in Table \ref{table_homogeneous}, where we have chosen
$\delta$ to maintain a constant accuracy for different values of $k$.
Figure \ref{figure_spaces_homogeneous} represents the points $(x^{k,m},\xi^{k,n})$
included in the space $\Lambda$ for different values of $\delta$ for the case $k=400$.

\begin{table}[!h]
\centering
\begin{tabular} {|c|ccc|}
\hline\noalign{\smallskip}
 $k$ & \multicolumn{3}{c|}{Relative $H^1_k$ error ($\delta$) [$N_{\rm dofs}$]} \\
\hline\noalign{\smallskip}
20 &  3.3539e-01 (0.5) [60] & 2.2356e-05 (2.0) [177] & 6.6722e-06 (4.0) [249] \\
50 &  2.3180e-02 (0.4) [38] & 1.7831e-05 (1.0) [229] & 4.4104e-06 (3.0) [419] \\
100 & 4.4322e-01 (0.3) [96] & 3.9496e-05 (0.8) [278] & 2.5610e-06 (2.0) [645] \\
200 & 3.9807e-02 (0.2) [58] & 1.8555e-05 (0.6) [334] & 4.4586e-06 (1.0) [825] \\
400 & 3.6922e-01 (0.1) [74] & 3.6289e-05 (0.4) [532] & 2.3452e-05 (0.6) [646] \\
\hline
\end{tabular}
\caption{Numerical results for different $k$ while varying $\delta$}
\label{table_homogeneous}
\end{table}

\begin{figure}
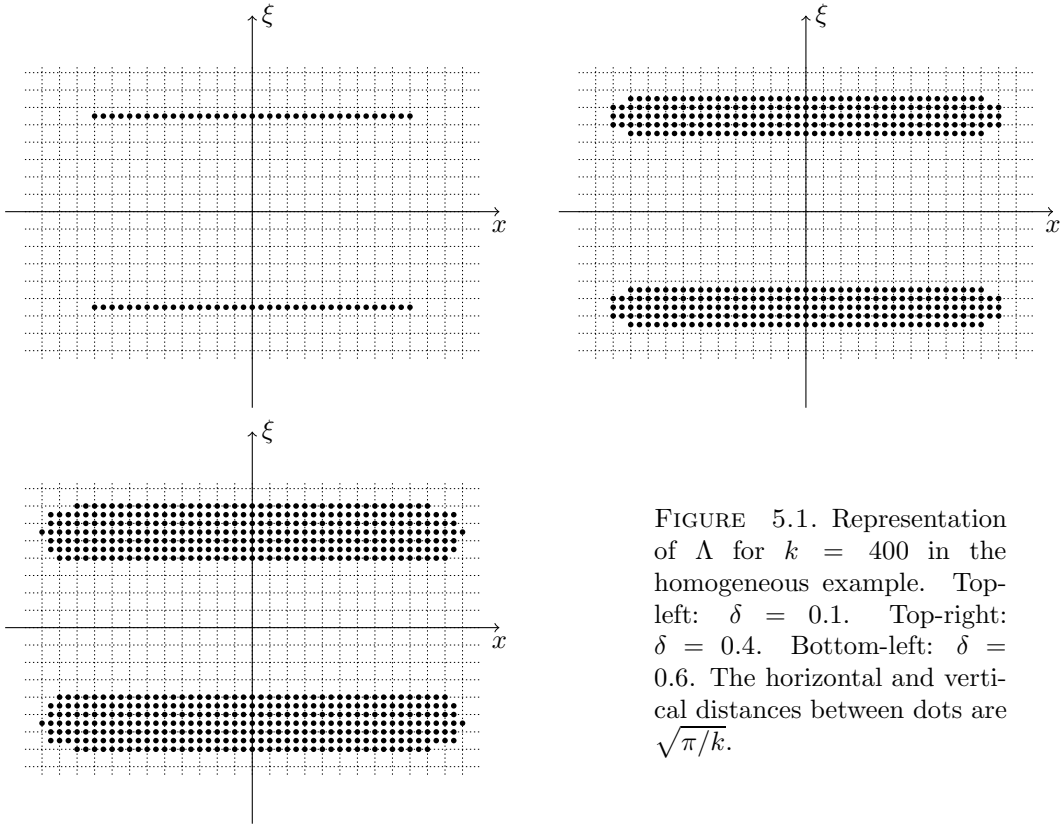

\begin{minipage}{.48\linewidth}
\begin{tikzpicture}[scale=1.3]

\draw[step=.17724538508,dash pattern=on 0.5pt off 1pt,ultra thin] (-2.3,-1.5) grid (2.3,1.5);

\draw[->] (-2.5,0) -- (2.5,0) node[anchor=north] {$x$};
\draw[->] (0,-2.0) -- (0,2.0) node[anchor=west] {$\xi$};

\fill (-1.59520846581496,-0.97484961799) circle (0.8pt);
\fill (-1.59520846581496, 0.97484961799) circle (0.8pt);
\fill (-1.50658577326969,-0.97484961799) circle (0.8pt);
\fill (-1.50658577326969, 0.97484961799) circle (0.8pt);
\fill (-1.41796308072441,-0.97484961799) circle (0.8pt);
\fill (-1.41796308072441, 0.97484961799) circle (0.8pt);
\fill (-1.32934038817914,-0.97484961799) circle (0.8pt);
\fill (-1.32934038817914,0.974849617998) circle (0.8pt);
\fill (-1.24071769563386,-0.97484961799) circle (0.8pt);
\fill (-1.24071769563386,0.974849617998) circle (0.8pt);
\fill (-1.15209500308859,-0.97484961799) circle (0.8pt);
\fill (-1.15209500308859,0.974849617998) circle (0.8pt);
\fill (-1.06347231054331,-0.97484961799) circle (0.8pt);
\fill (-1.06347231054331,0.974849617998) circle (0.8pt);
\fill (-0.97484961799803,-0.97484961799) circle (0.8pt);
\fill (-0.97484961799803,0.974849617998) circle (0.8pt);
\fill (-0.88622692545275,-0.97484961799) circle (0.8pt);
\fill (-0.88622692545275,0.974849617998) circle (0.8pt);
\fill (-0.79760423290748,-0.97484961799) circle (0.8pt);
\fill (-0.79760423290748,0.974849617998) circle (0.8pt);
\fill (-0.70898154036220,-0.97484961799) circle (0.8pt);
\fill (-0.70898154036220,0.974849617998) circle (0.8pt);
\fill (-0.62035884781693,-0.97484961799) circle (0.8pt);
\fill (-0.62035884781693,0.974849617998) circle (0.8pt);
\fill (-0.53173615527165,-0.97484961799) circle (0.8pt);
\fill (-0.53173615527165,0.974849617998) circle (0.8pt);
\fill (-0.44311346272637,-0.97484961799) circle (0.8pt);
\fill (-0.44311346272637,0.974849617998) circle (0.8pt);
\fill (-0.35449077018110,-0.97484961799) circle (0.8pt);
\fill (-0.35449077018110,0.974849617998) circle (0.8pt);
\fill (-0.26586807763582,-0.97484961799) circle (0.8pt);
\fill (-0.26586807763582,0.974849617998) circle (0.8pt);
\fill (-0.17724538509055,-0.97484961799) circle (0.8pt);
\fill (-0.17724538509055,0.974849617998) circle (0.8pt);
\fill (-0.08862269254527,-0.97484961799) circle (0.8pt);
\fill (-0.08862269254527,0.974849617998) circle (0.8pt);
\fill ( 0.00000000000000,-0.97484961799) circle (0.8pt);
\fill ( 0.00000000000000,0.974849617998) circle (0.8pt);
\fill ( 0.08862269254527,-0.97484961799) circle (0.8pt);
\fill ( 0.08862269254527,0.974849617998) circle (0.8pt);
\fill ( 0.17724538509055,-0.97484961799) circle (0.8pt);
\fill ( 0.17724538509055,0.974849617998) circle (0.8pt);
\fill ( 0.26586807763582,-0.97484961799) circle (0.8pt);
\fill ( 0.26586807763582,0.974849617998) circle (0.8pt);
\fill ( 0.35449077018110,-0.97484961799) circle (0.8pt);
\fill ( 0.35449077018110,0.974849617998) circle (0.8pt);
\fill ( 0.44311346272637,-0.97484961799) circle (0.8pt);
\fill ( 0.44311346272637,0.974849617998) circle (0.8pt);
\fill ( 0.53173615527165,-0.97484961799) circle (0.8pt);
\fill ( 0.53173615527165,0.974849617998) circle (0.8pt);
\fill ( 0.62035884781693,-0.97484961799) circle (0.8pt);
\fill ( 0.62035884781693,0.974849617998) circle (0.8pt);
\fill ( 0.70898154036220,-0.97484961799) circle (0.8pt);
\fill ( 0.70898154036220,0.974849617998) circle (0.8pt);
\fill ( 0.79760423290748,-0.97484961799) circle (0.8pt);
\fill ( 0.79760423290748,0.974849617998) circle (0.8pt);
\fill ( 0.88622692545275,-0.97484961799) circle (0.8pt);
\fill ( 0.88622692545275,0.974849617998) circle (0.8pt);
\fill ( 0.97484961799803,-0.97484961799) circle (0.8pt);
\fill ( 0.97484961799803,0.974849617998) circle (0.8pt);
\fill ( 1.06347231054331,-0.97484961799) circle (0.8pt);
\fill ( 1.06347231054331,0.974849617998) circle (0.8pt);
\fill ( 1.15209500308859,-0.97484961799) circle (0.8pt);
\fill ( 1.15209500308859,0.974849617998) circle (0.8pt);
\fill ( 1.24071769563386,-0.97484961799) circle (0.8pt);
\fill ( 1.24071769563386,0.974849617998) circle (0.8pt);
\fill ( 1.32934038817914,-0.97484961799) circle (0.8pt);
\fill ( 1.32934038817914,0.974849617998) circle (0.8pt);
\fill ( 1.41796308072441,-0.97484961799) circle (0.8pt);
\fill ( 1.41796308072441,0.974849617998) circle (0.8pt);
\fill ( 1.50658577326969,-0.97484961799) circle (0.8pt);
\fill ( 1.50658577326969,0.974849617998) circle (0.8pt);
\fill ( 1.59520846581496,-0.97484961799) circle (0.8pt);
\fill ( 1.59520846581496,0.974849617998) circle (0.8pt);
\end{tikzpicture}
\end{minipage}
\begin{minipage}{.48\linewidth}
\input{figures/spaces/hom_sample0.4_k400.tex}
\end{minipage}

\begin{minipage}{.48\linewidth}
\input{figures/spaces/hom_sample0.6_k400.tex}
\end{minipage}
\begin{minipage}{.48\linewidth}
\caption{Representation of $\Lambda$ for $k=400$ in the homogeneous example.
Top-left: $\delta = 0.1$. Top-right: $\delta = 0.4$. Bottom-left: $\delta = 0.6$.
The horizontal and vertical distances between dots are $\sqrt{\pi/k}$.}
\label{figure_spaces_homogeneous}
\end{minipage}
\end{figure}

On Figure \ref{figure_convergence_homogeneous}, we present the convergence history
of the method as $\delta$ is increased for different values of $k$. These curves
illustrate the fact that the method converges for any fixed $k$ when increasing
the number of coherent Gaussian states, as predicted by our theory.
Besides, on Figure \ref{figure_cost_homogeneous} we represent the values of
$\delta$ and corresponding number of freedom $N_{\rm dofs}$ required to achieve
a value of about $2 \times 10^{-5}$ (second column of Table \ref{table_homogeneous}) for
different frequencies. Interestingly, the expected rates, namely, $\delta \sim k^{-1/2}$
and $N_{\rm dofs} \sim k^{1/2}$ are observed.

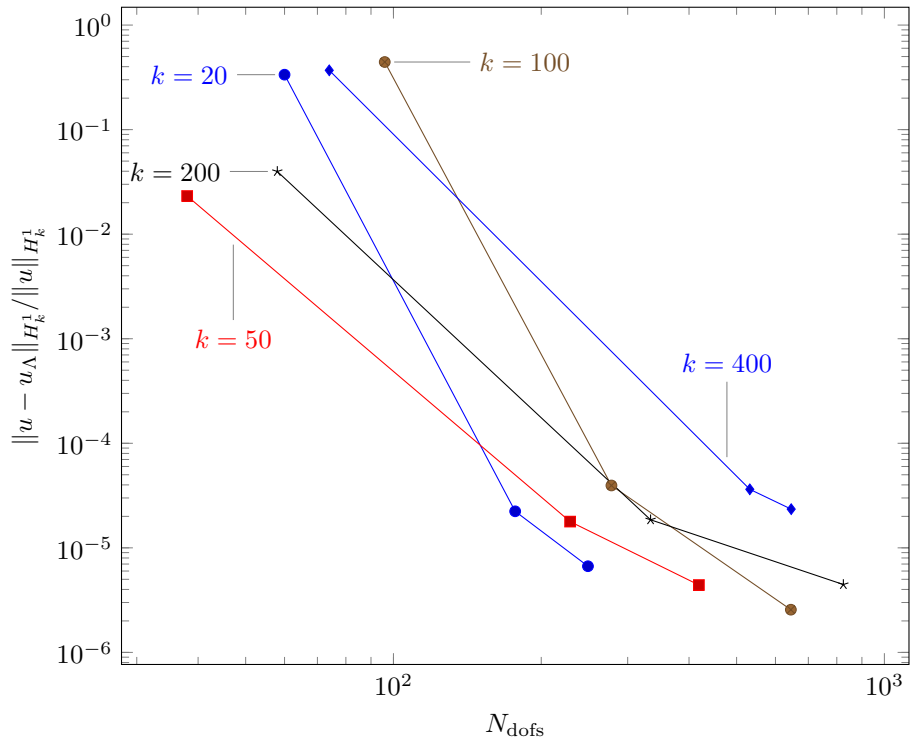
\begin{figure}
\begin{tikzpicture}
\begin{axis}
[
	xlabel = {$N_{\rm dofs}$},
	ylabel = {$\|u-u_\Lambda\|_{H^1_k}/\|u\|_{H^1_k}$},
	ymode = log,
	xmode = log,
	width=.80\linewidth
]

\addplot table[x = ndf, y = error] {figures/data/scattering_homogeneous/k020.txt}
node[pos=0,pin={[pin distance=.5cm]180:{$k=20$}}] {};
\addplot table[x = ndf, y = error] {figures/data/scattering_homogeneous/k050.txt}
node[pos=0.1,pin={[pin distance=1cm]-90:{$k=50$}}] {};
\addplot table[x = ndf, y = error] {figures/data/scattering_homogeneous/k100.txt}
node[pos=0,pin={[pin distance=1cm]0:{$k=100$}}] {};
\addplot table[x = ndf, y = error] {figures/data/scattering_homogeneous/k200.txt}
node[pos=0,pin={[pin distance=.5cm]180:{$k=200$}}] {};
\addplot table[x = ndf, y = error] {figures/data/scattering_homogeneous/k400.txt}
node[pos=0.9,pin={[pin distance=1cm]90:{$k=400$}}] {};

\end{axis}
\end{tikzpicture}
\caption{Convergence histories in the homogeneous example}
\label{figure_convergence_homogeneous}
\end{figure}

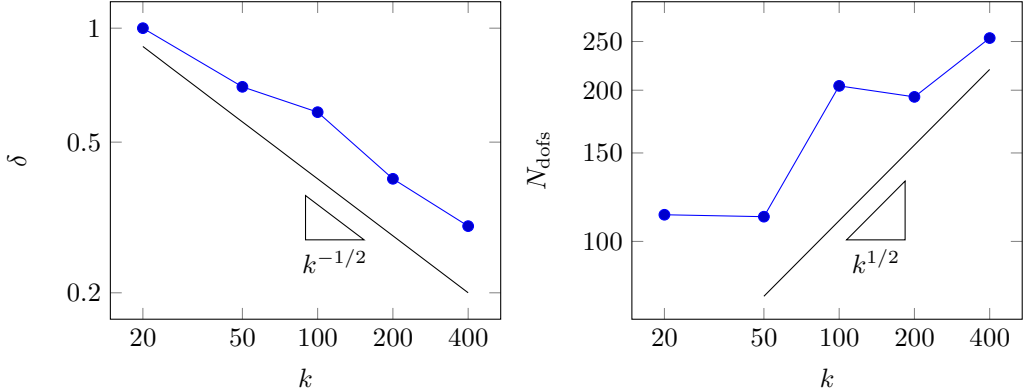
\begin{figure}
\begin{minipage}{.45\linewidth}
\begin{tikzpicture}
\begin{axis}
[
	xlabel = {$k$},
	ylabel = {$\delta$},
	ymode = log,
	xmode = log,
	ytick =       {0.2,0.5,1},
	yticklabels = {0.2,0.5,1},
	xtick       = {20,50,100,200,400},
	xticklabels = {20,50,100,200,400},
	width = \linewidth
]

\addplot table[x = freq, y = delta] {figures/data/scattering_homogeneous/freq.txt};

\plot[black,solid,mark=none,domain=20:400] {4*x^(-1/2)};
\SlopeTriangle{.50}{-.15}{.25}{-0.5}{$k^{-1/2}$}{}

\end{axis}
\end{tikzpicture}
\end{minipage}
\begin{minipage}{.45\linewidth}
\begin{tikzpicture}
\begin{axis}
[
	xlabel = {$k$},
	ylabel = {$N_{\rm dofs}$},
	ymode = log,
	xmode = log,
	ytick =       {100,150,200,250},
	yticklabels = {100,150,200,250},
	xtick =       {20,50,100,200,400},
	xticklabels = {20,50,100,200,400},
	ymin= 70,
	ymax=300,
	width = \linewidth
]

\addplot table[x = freq, y = ndf] {figures/data/scattering_homogeneous/freq.txt};

\plot[black,solid,mark=none,domain=50:400] {11*x^(1/2)};
\SlopeTriangle{.70}{.15}{.25}{0.5}{$k^{1/2}$}{}

\end{axis}
\end{tikzpicture}
\end{minipage}
\caption{Computational cost for a fixed accuracy in the homogeneous example}
\label{figure_cost_homogeneous}
\end{figure}


\subsection{Scattering in an heterogeneous medium}

We now focus on the case where $\alpha = 1$ and
$\mu$ is the only even $C^3$ functions that equals $2$ on $[0,0.7]$,
equals $1$ on $[0.8,+\infty]$ and is a polynomial of degree $7$ on
$[0.7,0.8]$. We select the right-hand side
$f_k(x) \eq k^2(\mu-1)e^{ikx}$.
Here, the analytical solution is not available, and instead,
we rely on a reference solution computed by a Lagrange finite element method
of order $4$ on a grid with $h = 0.02 \cdot k^{-\frac{9}{8}}$ in order to avoid
the pollution effect (see, e.g., \cite{ihlenburg_babuska_1997a}).
The results are listed on Table \ref{table_heterogeneous}, and
Figure \ref{figure_spaces_heterogeneous} represents the phase-space points
included in the space $\Lambda$ for the case $k=200$.

\begin{table}[!h]
\centering
\begin{tabular} {|c|llll|}
\hline\noalign{\smallskip}
 $k$ & \multicolumn{4}{c|}{Relative $H^1_k$ error ($\delta$) [$N_{\rm dofs}$]} \\
\hline\noalign{\smallskip}
 20 & 1.7861e-01 (2.0) [183] & 4.1982e-02 (4.0) [255] & 1.6388e-03 (8.0) [399]  & 1.3463e-04 (12.0) [569]  \\
 50 & 3.0122e-01 (2.0) [337] & 2.5263e-02 (4.0) [511] & 1.0504e-03 (6.0) [695]  & 4.5593e-04  (8.0) [841]  \\
100 & 2.0614e-01 (1.0) [376] & 7.7805e-03 (2.0) [665] & 1.7647e-04 (4.0) [995]  & 1.3300e-04  (6.0) [1309] \\
200 & 3.8799e-01 (0.5) [250] & 8.5455e-03 (1.0) [748] & 6.9635e-04 (2.0) [1233] & 1.2300e-04  (4.0) [1879] \\
\hline
\end{tabular}
\caption{Numerical results for $k=20,50,100,200$ while varying $\delta$}
\label{table_heterogeneous}
\end{table}

We provide the same figures than in the previous experiment, and arrive at similar
observation. First, Figure \ref{figure_convergence_heterogeneous} illustrates the
convergence of the method for fixed values of $k$ as the number of degrees of freedom is
increased. Second, we compare the values of $\delta$ and $N_{\rm dofs}$ to achieve
an accuracy about $10^{-4}$ (last column of Table \ref{table_heterogeneous}) for different
frequencies. In this example too, the expected rates are numerically observed.

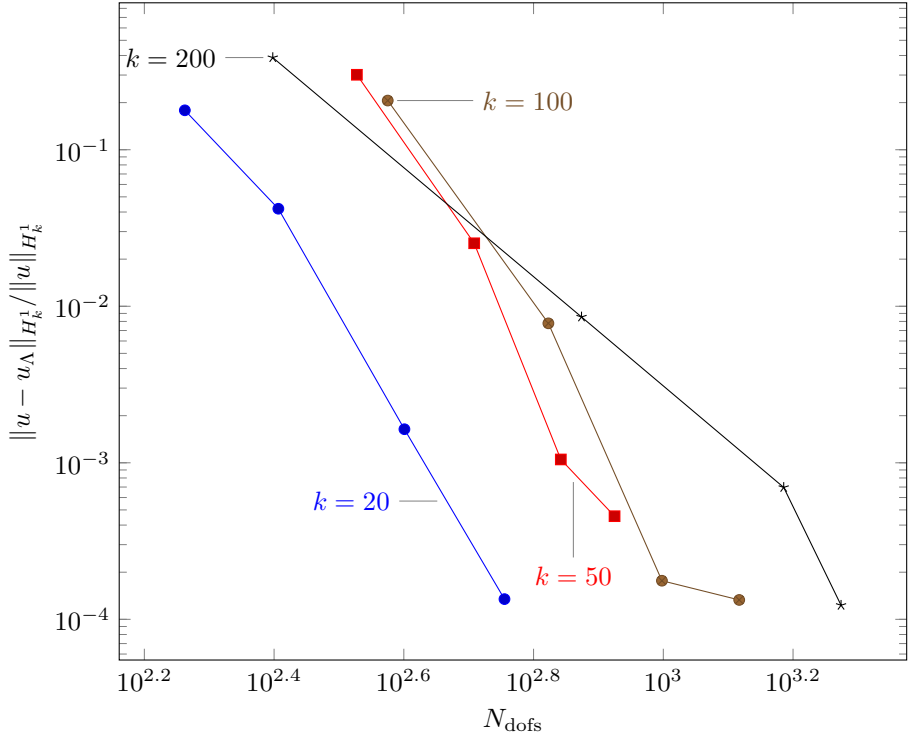
\begin{figure}
\begin{tikzpicture}
\begin{axis}
[
	xlabel = {$N_{\rm dofs}$},
	ylabel = {$\|u-u_\Lambda\|_{H^1_k}/\|u\|_{H^1_k}$},
	ymode = log,
	xmode = log,
	width=.80\linewidth
]

\addplot table[x = ndf, y = error] {figures/data/scattering_heterogeneous/k020.txt}
node[pos=0.8,pin={[pin distance=.5cm]180:{$k=20$}}] {};
\addplot table[x = ndf, y = error] {figures/data/scattering_heterogeneous/k050.txt}
node[pos=0.9,pin={[pin distance=1cm]-90:{$k=50$}}] {};
\addplot table[x = ndf, y = error] {figures/data/scattering_heterogeneous/k100.txt}
node[pos=0,pin={[pin distance=1cm]0:{$k=100$}}] {};
\addplot table[x = ndf, y = error] {figures/data/scattering_heterogeneous/k200.txt}
node[pos=0,pin={[pin distance=.5cm]180:{$k=200$}}] {};

\end{axis}
\end{tikzpicture}
\caption{Convergence histories in the heterogeneous example}
\label{figure_convergence_heterogeneous}
\end{figure}

\begin{figure}
\begin{minipage}{.45\linewidth}
\begin{tikzpicture}
\begin{axis}
[
	xlabel = {$k$},
	ylabel = {$\delta$},
	ymode = log,
	xmode = log,
	ytick =       {4,6,8,12},
	yticklabels = {4,6,8,12},
	xtick       = {20,50,100,200,400},
	xticklabels = {20,50,100,200,400},
	width = \linewidth
]

\addplot table[x = freq, y = delta] {figures/data/scattering_heterogeneous/freq.txt};

\plot[black,solid,mark=none,domain=20:200] {50*x^(-1/2)};
\SlopeTriangle{.50}{-.15}{.25}{-0.5}{$k^{-1/2}$}{}

\end{axis}
\end{tikzpicture}
\end{minipage}
\begin{minipage}{.45\linewidth}
\begin{tikzpicture}
\begin{axis}
[
	xlabel = {$k$},
	ylabel = {$N_{\rm dofs}$},
	ymode = log,
	xmode = log,
	ytick =       {500,1000,1500},
	yticklabels = {500,1000,1500},
	xtick =       {20,50,100,200,400},
	xticklabels = {20,50,100,200,400},
	width = \linewidth
]

\addplot table[x = freq, y = ndf] {figures/data/scattering_heterogeneous/freq.txt};

\plot[black,solid,mark=none,domain=20:200] {100*x^(1/2)};
\SlopeTriangle{.70}{.15}{.35}{0.5}{$k^{1/2}$}{}

\end{axis}
\end{tikzpicture}
\end{minipage}
\caption{Computational cost for a fixed accuracy in the heterogeneous example}
\label{figure_cost_heterogeneous}
\end{figure}


\begin{figure}
\begin{minipage}{.48\linewidth}
\input{figures/spaces/heter_sample0.5_k200.tex}
\end{minipage}
\begin{minipage}{.48\linewidth}
\input{figures/spaces/heter_sample1_k200.tex}
\end{minipage}

\begin{minipage}{.48\linewidth}
\input{figures/spaces/heter_sample2_k200.tex}
\end{minipage}
\begin{minipage}{.48\linewidth}
\caption{Representation of $\Lambda$ for $k=200$ in the heterogeneous example.
Top-left: $\delta = 0.5$. Top-right: $\delta = 1$. Bottom-left: $\delta = 2$.
The horizontal and vertical distances between dots are $\sqrt{\pi/k}$.}
\label{figure_spaces_heterogeneous}
\end{minipage}
\end{figure}

\section{Conclusion}

We propose a new family of finite-dimensional spaces to approximate
the solutions to high-frequency Helmholtz problems. These discretization
spaces are spanned by Gaussian coherent states, which have the
key property to be micro-localised in phase space. This unique feature allows
to carefully select which Gaussian coherent states are included in the discretization,
leading to a frequency-aware discretization space specifically tailored to approximate the
solutions to scattering problems efficiently.

Our key findings correspond to two types of approximability results.
First, assuming for simplicity that the problem is non-trapping, for
general $L^2$ right-hand sides, we show that the Gaussian state approximation
converges and provides a uniform error for all frequencies if the number of degrees
of freedom grows as $(kR)^d$. This result is similar to approximation results
available for finite element discretizations. Our second result applies when
the right-hand side corresponds to a plane-wave scattering problem. In this case, we show
that it is sufficient for the number of degrees of freedom to grow only as
$(kR)^{d-1/2}$ to achieve a constant accuracy for increasing frequencies.
To the best of the authors knowledge, this last estimate suggests that the
proposed discretization space requires substantially less degrees of freedom
than any available method in the literature for high-frequency scattering
problems in general smooth heterogeneous media.

We also present a set of numerical examples where our Gaussian state spaces
are coupled with a least-squares variational formulation. Although the setting
is elementary, these examples successfully illustrate the key features of our
abstract analysis.

While we believe that the proposed results are very
encouraging for a further development of the proposed method, there still
remain several challenges that we would like to address in future works.
First (i), we have chosen to focus on a least-squares method,
because it is simpler to analyse than a Galerkin formulation. However,
least-squares methods are typically poorly-conditioned as compared to their Galerkin counterparts.
While there is no reason to believe that the proposed discrete space would not
work with a Galerkin variational formulation, the analysis appears to be
substantially more complex. Second (ii), our convergence analysis
is currently split into two distinct cases: arbitrary precision for a fixed frequency
(with about $(kR)^d$ DOFs) and or fixed presision for large frequencies (where the
number DOFs scales as $(kR)^{d-1/2}$). In future works, we will focus on unifying
the convergence analysis over the whole frequency range. Third (iii),
especially for three-dimensional problems, computing the entries of the
matrices associated with Gaussian coherent states is not a trivial task, and specific
algorithms should be employed. Besides (iv), although the entries of the discrete matrices
decrease super-algebraically away from their diagonal, they are still dense in principle.
While we are convinced that efficient truncation or compression methods
can be employed, this is still to be analysed. In addition (v),
the resulting linear systems (after possible truncation and/or compression)
will likely have different properties than usual volume methods. As a result, the design
of specific preconditioners will probably be required.
Finally (vi), although we do not see a convenient way to handle
boundary conditions with the proposed Gaussian coherent states, a promising solution
would be to replace the Gaussian by another smooth, but compactly supported, function
in the definition of the Gabor frame.

\appendix

\section{Interplay between Gaussian states and differential operators}
\label{sec:GS}

\begin{lemma}[Action of a differential operator]
Given $\bbeta \in \N^d$ and smooth coefficients $(\TA_{\balpha})_{\balpha \in \N^d}$
that are bounded along with all their derivatives, consider the differential operator
\begin{equation}
\label{eq_diff_operator}
\CP_{\hbar,\bbeta,\TA}
\eq
\sum_{\substack{\balpha \in \N^d \\ \balpha \leq \bbeta}}
\hbar^{[\balpha]} \TA_{\balpha} \partial^{\balpha}.
\end{equation}
Then for all $[\bx_0,\bxi_0] \in \Z^{2d}$ there exists a smooth bounded
function $g_{\hbar,\bbeta,\TA,\bx_0,\xi_0}$ such that
\begin{equation}
\label{eq_diff_action}
(\CP_{\hbar,\bbeta,\TA} \GShknot)(\bx)
=
g_{\hbar,\bbeta,\TA,\bx_0,\bxi_0}(\bx)\GShknot(\bx)
\qquad \forall \bx \in \R^d.
\end{equation}
In addition, we have
\begin{equation}
\label{eq_diff_action_smoothness}
|(\partial^{\bgamma} g_{\hbar,\bbeta,\TA,\bx_0,\xi_0})(\bx)|
\leq
C_{\bbeta,\TA,\bgamma}
\left (
1 + |\bx-\bx_0|^{[\bbeta]} + |\bxi_0|^{[\bbeta]}
\right )
\qquad
\forall \bgamma \in \N^d.
\end{equation}
\end{lemma}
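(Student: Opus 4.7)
My plan is to derive the identity \eqref{eq_diff_action} by a short induction on the multi-index order $[\balpha]$ showing that each rescaled derivative $\hbar^{[\balpha]}\partial^{\balpha}$ applied to a Gaussian state produces a polynomial in $(\bx-\bx_0,\bxi_0)$ times the same Gaussian, and then reading off $g_{\hbar,\bbeta,\TA,\bx_0,\bxi_0}$ as a weighted sum of these polynomials.

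Concretely, a direct computation gives
\begin{equation*}
\hbar \partial_j \GShknot(\bx)
=
\bigl(-(x_j-x_{0,j}) + i\xi_{0,j}\bigr) \GShknot(\bx).
\end{equation*}
The first step is to use this relation as the base case of an induction on $[\balpha]$ establishing that, for every $\balpha \in \N^d$, there exists a polynomial $P_{\hbar,\balpha}$ in the $2d$ variables $(\bx-\bx_0,\bxi_0)$ of total degree at most $[\balpha]$, whose coefficients depend on $\hbar \in (0,1]$ but are bounded uniformly in $\hbar$, such that
\begin{equation*}
\hbar^{[\balpha]} \partial^{\balpha} \GShknot(\bx)
=
P_{\hbar,\balpha}(\bx-\bx_0,\bxi_0)\,\GShknot(\bx).
\end{equation*}
The inductive step just applies $\hbar\partial_j$ to both sides using the base case, giving the recursion $P_{\hbar,\balpha+\be_j} = \hbar\partial_{x_j} P_{\hbar,\balpha} + \bigl(-(x_j-x_{0,j})+i\xi_{0,j}\bigr)P_{\hbar,\balpha}$, which preserves the degree bound and the uniform boundedness of the coefficients in $\hbar$ since each $\hbar$-derivative only lowers the degree while producing a factor $\hbar \in (0,1]$.

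The second step is then to set
\begin{equation*}
g_{\hbar,\bbeta,\TA,\bx_0,\bxi_0}(\bx)
\eq
\sum_{\substack{\balpha \in \N^d \\ \balpha \leq \bbeta}}
\TA_{\balpha}(\bx)\,P_{\hbar,\balpha}(\bx-\bx_0,\bxi_0),
\end{equation*}
which immediately yields \eqref{eq_diff_action}. For \eqref{eq_diff_action_smoothness}, I would differentiate under the sum using the Leibniz rule. The coefficients $\TA_{\balpha}$ lie in $C^\infty_{\rm b}(\R^d)$, so all their derivatives are uniformly bounded. On the polynomial side, any derivative of $P_{\hbar,\balpha}(\bx-\bx_0,\bxi_0)$ with respect to $\bx$ is again a polynomial in $(\bx-\bx_0,\bxi_0)$ of total degree at most $[\balpha] \leq [\bbeta]$, and a polynomial of total degree $n$ in two vector variables $(\bu,\bv)$ is clearly bounded by $C(1+|\bu|^n+|\bv|^n)$, which gives the announced growth bound.

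The computation is essentially routine; the only point requiring a bit of care is verifying that the polynomial coefficients appearing in $P_{\hbar,\balpha}$ remain bounded uniformly in $\hbar \in (0,1]$, which is what makes the right-hand side of \eqref{eq_diff_action_smoothness} $\hbar$-independent. This is where the precise pairing of the $\hbar^{[\balpha]}$ prefactor with $[\balpha]$ derivatives is essential: without this scaling, factors of $\hbar^{-1}$ would spoil the bound. Beyond this bookkeeping, I do not anticipate any real obstacle.
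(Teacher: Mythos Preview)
Your proposal is correct and follows essentially the same approach as the paper: both represent $\hbar^{[\balpha]}\partial^{\balpha}\GShknot$ as a polynomial of degree at most $[\balpha]$ times $\GShknot$, sum against the coefficients $\TA_{\balpha}$, and then use Leibniz' rule for the derivative bound. The only cosmetic difference is that the paper quotes the identity $\partial^{\balpha}\GShknot=\hbar^{-[\balpha]/2}q_{\balpha}\bigl(\hbar^{-1/2}(\bx-\bx_0+i\bxi_0)\bigr)\GShknot$ from an earlier reference and tracks the $\hbar$-powers through the rescaled complex variable, whereas you derive the same fact by a short induction and absorb the $\hbar$-powers into bounded polynomial coefficients; these are equivalent bookkeepings of the same computation.
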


In \eqref{eq_diff_action_smoothness}, the constant $C_{\bbeta,\TA,\bgamma}$ depends on
$\mathrm{A}$ only through bounds on a finite number of derivatives of the functions
$\mathrm{A}_{\balpha}$ for $\balpha \leq \bbeta$,  where this number of derivatives
depends on $\bbeta$ and $\bgamma$. We will use similar notations for constants in the
rest of the appendix.

\begin{proof}
Recalling \cite[Lemma A.2]{chaumontfrelet_ingremeau_2022a}, for $\balpha \in \N$, we have 
\begin{equation*}
(\partial^{\balpha}\GShknot)(\bx)
=
\hbar^{-[\balpha]/2}q_{\balpha}(\hbar^{-1/2}(\bx-\bx_0+i\bxi_0))
\GShknot(\bx),
\end{equation*}
where $q_{\balpha}$ is a polynomial of degree less than or equal to $[\balpha]$.
Hence, we readily see that \eqref{eq_diff_action} holds true with
\begin{equation*}
g_{\hbar,\bbeta,\TA,\bx_0,\bxi_0}(\bx)
\eq
\sum_{\substack{\balpha \in \N^d \\ \balpha \leq \bbeta}}
\hbar^{[\balpha]/2} \TA_{\balpha}(\bx)q_{\balpha}(\hbar^{-1/2}(\bx-\bx_0+i\bxi_0)).
\end{equation*}
To establish \eqref{eq_diff_action_smoothness}, we start with Leibniz' rule
\begin{equation*}
(\partial^{\bgamma} g_{\hbar,\bbeta,\TA,\bx_0,\bxi_0})(\bx)
\eq
\sum_{\substack{\balpha \in \N^d \\ \balpha \leq \bbeta}}
\hbar^{[\balpha]/2}
\sum_{\substack{\bgamma \in \N^d \\ \bdelta \leq \bgamma}}
\left (
\begin{array}{c}
\bdelta \\ \bgamma
\end{array}
\right )
\hbar^{-[\bdelta]/2}
(\partial^{\bgamma-\bdelta}\TA_{\balpha})(\by)
(\partial^{\bdelta} q_{\balpha})(\hbar^{-1/2}(\bx-\bx_0+i\bxi_0)),
\end{equation*}
so that
\begin{equation*}
|(\partial^{\bgamma} g_{\hbar,\bbeta,\TA,\bx_0,\bxi_0})(\by)|
\leq
C_{\bbeta,\TA,\bgamma}
\sum_{\substack{\balpha \in \N^d \\ \balpha \leq \bbeta}}
\hbar^{[\balpha]/2}
\sum_{\substack{\bdelta \in \N^d \\ \bdelta \leq \bgamma}}
\hbar^{-[\bdelta]/2}
|(\partial^{\bdelta} q_{\balpha})(\hbar^{-1/2}(\bx-\bx_0+i\bxi_0))|.
\end{equation*}
Next, we employ the fact that $\partial^{\bdelta} q_{\balpha} = 0$
whenever $\bdelta > \balpha$, and the estimate
\begin{equation*}
|(\partial^{\bdelta} q_{\balpha})(\hbar^{-1/2}(\bx-\bx_0+i\bxi_0))|
\leq
C_{\balpha,\bdelta} \left (
1 + \hbar^{-([\balpha]-[\bdelta])/2}|\bx-\bx_0+i\bxi_0|^{[\balpha]-[\bdelta]}
\right ),
\end{equation*}
leading to
\begin{align*}
|(\partial^{\bgamma} g_{\hbar,\bbeta,\TA,\bx_0,\bxi_0})(\bx)|
&\leq
C_{\bbeta,\TA,\bgamma}
\sum_{\substack{\balpha \in \N^d \\ \balpha \leq \bbeta}}
\hbar^{[\balpha]/2}
\sum_{\substack{\bgamma \in \N^d \\ \bdelta \leq \bgamma}}
\left (1 + (\hbar^{-1/2}|\bx-\bx_0+i\bxi_0|)^{[\balpha]-[\bdelta]}\right )
\\
&\leq
C_{\bbeta,\TA,\bgamma}
\sum_{\substack{\balpha \in \N^d \\ \balpha \leq \bbeta}}
\hbar^{[\balpha]/2}
\left (1 + (\hbar^{-1/2}|\bx-\bx_0+i\bxi_0|)^{[\balpha]}\right )
\\
&\leq
C_{\bbeta,\TA,\bgamma}
\sum_{\substack{\balpha \in \N^d \\ \balpha \leq \bbeta}}
\left (1 + |\bx-\bx_0+i\bxi_0|^{[\balpha]} \right )
\\
&\leq
C_{\bbeta,\TA,\bgamma}
\left (1 + |\bx-\bx_0+i\bxi_0|^{[\bbeta]}\right )
\end{align*}
Then, \eqref{eq_diff_action_smoothness} follows since
\begin{equation*}
|\bx-\bx_0+i\bxi_0|^{[\bbeta]}
\leq
C_{\bbeta} \left ( |\bx-\bx_0|^{[\bbeta]} + |\bxi_0|^{[\bbeta]} \right ).
\qed
\end{equation*}
\end{proof}

For the reader's convenience, we recall the following basic fact.

\begin{proposition}[Moments of a Gaussian]
\label{proposition_gaussian_h_moments}
Consider the Gaussian function
\begin{equation}
\label{eq_gaussian_h}
G_\hbar(\by) \eq \hbar^{-d/2} e^{-\frac{1}{\hbar}|\by|^2}.
\end{equation}
Then, for $\bbeta,\bgamma \in \N^d$, we have
\begin{equation}
\label{eq_gaussian_h_moments}
\int_{\R^d} |\bx^{\bbeta}(\partial^{\bgamma}G_\hbar)(\bx-\bx_0)| \dx
\leq
C_{\bbeta,\bgamma}
\hbar^{-[\bgamma]/2} (\hbar^{[\bbeta]/2} + |\bx_0|^{[\bbeta]})
\qquad
\forall \bx_0 \in \R^d.
\end{equation}
\end{proposition}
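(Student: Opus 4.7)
The plan is to reduce the integral to standard Gaussian moment computations via an explicit formula for the derivatives of $G_\hbar$. First I would change variables, writing $\by \eq \bx - \bx_0$, so that
\begin{equation*}
\int_{\R^d} |\bx^{\bbeta}(\partial^{\bgamma}G_\hbar)(\bx-\bx_0)| \dx
=
\int_{\R^d} |(\by+\bx_0)^{\bbeta}| \cdot |(\partial^{\bgamma}G_\hbar)(\by)| \dy,
\end{equation*}
and then expand $(\by+\bx_0)^{\bbeta}$ by the multinomial/binomial identity so that the integrand is bounded, up to a constant $C_{\bbeta}$, by a finite sum of terms of the form $|\by|^{[\bbeta']} \, |\bx_0|^{[\bbeta]-[\bbeta']} |(\partial^{\bgamma}G_\hbar)(\by)|$ with $\bbeta' \leq \bbeta$.

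Second, I would use the explicit structure of derivatives of a Gaussian: a straightforward induction (or direct calculation using the Hermite polynomials) shows that
\begin{equation*}
(\partial^{\bgamma} G_\hbar)(\by)
=
\hbar^{-[\bgamma]/2} \, q_{\bgamma}(\hbar^{-1/2}\by) \, G_\hbar(\by),
\end{equation*}
where $q_{\bgamma}$ is a polynomial of degree at most $[\bgamma]$ depending only on $\bgamma$ (this is already used in the proof of \eqref{eq_diff_action_smoothness}). Hence $|(\partial^{\bgamma}G_\hbar)(\by)| \leq C_{\bgamma} \hbar^{-[\bgamma]/2} (1 + (|\by|/\sqrt{\hbar})^{[\bgamma]}) G_\hbar(\by)$.

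Third, I would reduce the problem to the scalar Gaussian moment identity
\begin{equation*}
\int_{\R^d} |\by|^{m} G_\hbar(\by) \dy = C_m \, \hbar^{m/2}
\qquad \forall m \in \N,
\end{equation*}
which follows from the rescaling $\by = \sqrt{\hbar} \bz$ and the normalization $\int_{\R^d} e^{-|\bz|^2} \dz = \pi^{d/2}$, together with the finiteness of the moments of the standard Gaussian. Combining this with the previous two steps yields, for each $\bbeta' \leq \bbeta$,
\begin{equation*}
\int_{\R^d} |\by|^{[\bbeta']} |(\partial^{\bgamma}G_\hbar)(\by)| \dy
\leq
C_{\bbeta',\bgamma} \hbar^{-[\bgamma]/2} \hbar^{[\bbeta']/2},
\end{equation*}
and summing against the factors $|\bx_0|^{[\bbeta]-[\bbeta']}$ produces
\begin{equation*}
C_{\bbeta,\bgamma} \hbar^{-[\bgamma]/2}
\sum_{\bbeta' \leq \bbeta} \hbar^{[\bbeta']/2} |\bx_0|^{[\bbeta]-[\bbeta']}.
\end{equation*}
The final step is simply to bound this sum by $C_{\bbeta,\bgamma}\hbar^{-[\bgamma]/2}(\hbar^{[\bbeta]/2} + |\bx_0|^{[\bbeta]})$, which is immediate since each mixed term $\hbar^{[\bbeta']/2}|\bx_0|^{[\bbeta]-[\bbeta']}$ with $0<[\bbeta']<[\bbeta]$ is dominated by the larger of the two extreme terms (by Young's inequality applied to $\hbar^{[\bbeta']/2}|\bx_0|^{[\bbeta]-[\bbeta']}$). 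There is no essential obstacle here; the only minor subtlety is keeping track of the exponents in $\hbar$ through the combination of the $\hbar^{-[\bgamma]/2}$ factor from the derivative and the $\hbar^{[\bbeta']/2}$ factors from the Gaussian moments, which the chosen variable change makes transparent.
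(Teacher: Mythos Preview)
Your proof is correct. The paper actually does not prove this proposition at all: it is stated as a ``basic fact'' recalled ``for the reader's convenience'' and left without argument. Your approach---change of variables to center at the origin, the Hermite-type identity $(\partial^{\bgamma}G_\hbar)(\by)=\hbar^{-[\bgamma]/2}q_{\bgamma}(\hbar^{-1/2}\by)G_\hbar(\by)$, rescaling to reduce to standard Gaussian moments, and Young's inequality to collapse the mixed terms---is exactly the computation one would expect and is fully rigorous.
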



We now show that Gaussian states are localised in phase-space.


\begin{lemma}[Quasi orthogonality]
Let $\CP_{\hbar,\bbeta,\TA}$ be as in \eqref{eq_diff_operator}.
Then, for all $\bgamma\in \N^d$, $[\bx_0,\bxi_0] \in \R^{2d}$ and $[\bx_0',\bxi_0'] \in \R^{2d}$,
we have
\begin{equation*}
|(\bx^{\bgamma}\CP_{\hbar,\bbeta,\TA} \GShknot,\GShknotp)|
\leq
C_{\bgamma,\bbeta,\TA,m}
\hbar^{m/2} (1+ |\bx_0 + \bx_0'|^{[\bgamma]})
\frac{1 + |\bxi_0|^{[\bbeta]} + |\bx_0-\bx_0'|^{[\bbeta]}}{1+|\bxi_0-\bxi_0'|^m}
e^{-\frac{1}{4\hbar}|\bx_0-\bx_0'|^2}
\qquad
\forall m \in \N.
\end{equation*}
In particular,
\begin{equation*}
|(\bx^{\bgamma}\CP_{\hbar,\bbeta,\TA} \GShknot,\GShknotp)|
\leq
C_{\bgamma,\bbeta,\TA,m}
\hbar^{m/2} (1 + |\bx_0+ \bx_0'|^{[\bgamma]})
\frac{1 + |\bxi_0|^{[\bbeta]}}{1+|[\bx_0,\bxi_0]-[\bx_0',\bxi_0']|^m}
\qquad
\forall m \in \N.
\end{equation*}
\end{lemma}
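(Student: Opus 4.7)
The plan is to combine the previous lemma (action of $\CP_{\hbar,\bbeta,\TA}$ on a single Gaussian state) with a standard non-stationary phase argument in the momentum variable. First, I write $(\CP_{\hbar,\bbeta,\TA}\GShknot)(\bx) = g(\bx)\GShknot(\bx)$ for the function $g$ provided by the previous lemma, which obeys
\[
|\partial^\bgamma g(\bx)| \leq C_{\bbeta,\TA,\bgamma}\bigl(1 + |\bx-\bx_0|^{[\bbeta]} + |\bxi_0|^{[\bbeta]}\bigr).
\]
Setting $\bar\bx \eq (\bx_0+\bx_0')/2$ and $\bw \eq \bxi_0-\bxi_0'$, the elementary identity $|\bx-\bx_0|^2 + |\bx-\bx_0'|^2 = 2|\bx-\bar\bx|^2 + \tfrac{1}{2}|\bx_0-\bx_0'|^2$ yields
\[
\GShknot(\bx)\,\overline{\GShknotp(\bx)} = C\, e^{-\frac{1}{4\hbar}|\bx_0-\bx_0'|^2}\, G_\hbar(\bx-\bar\bx)\, e^{\frac{i}{\hbar}(\bw\cdot\bx + c_0)},
\]
with $c_0 \in \R$ a constant independent of $\bx$ and $G_\hbar$ the reference Gaussian of Proposition \ref{proposition_gaussian_h_moments}. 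Hence, up to a unimodular factor,
\[
(\CP_{\hbar,\bbeta,\TA}\GShknot,\GShknotp) = C\, e^{-\frac{1}{4\hbar}|\bx_0-\bx_0'|^2} \int_{\R^d} g(\bx)\, G_\hbar(\bx-\bar\bx)\, e^{\frac{i}{\hbar}\bw\cdot\bx}\,\dx.
\]

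For $\bw \neq 0$, I iterate the identity $e^{i\bw\cdot\bx/\hbar} = (\hbar/(i|\bw|^2))(\bw\cdot\nabla_\bx) e^{i\bw\cdot\bx/\hbar}$ exactly $m$ times and integrate by parts; no boundary terms arise thanks to Gaussian decay. Taking absolute values and estimating $(\bw\cdot\nabla)^m$ by $|\bw|^m$ times a sum of partial derivatives of order $m$ yields
\[
\left|\int g\, G_\hbar(\cdot-\bar\bx)\, e^{\frac{i}{\hbar}\bw\cdot\bx}\,\dx\right|
\leq
\frac{\hbar^m}{|\bw|^m}\sum_{[\bgamma]=m} C_\bgamma \int \bigl|\partial^\bgamma\bigl(g\, G_\hbar(\cdot-\bar\bx)\bigr)\bigr|\,\dx.
\]
Applying Leibniz' rule and the pointwise bound on $\partial^{\bgamma-\bdelta}g$, together with $|\bx-\bx_0|^{[\bbeta]} \leq C_\bbeta(|\bx-\bar\bx|^{[\bbeta]} + |\bx_0-\bx_0'|^{[\bbeta]})$, reduces the right-hand integrals to absolute moments of derivatives of $G_\hbar$. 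Proposition \ref{proposition_gaussian_h_moments} then gives
\[
\int \bigl|\partial^\bgamma\bigl(g\, G_\hbar(\cdot-\bar\bx)\bigr)\bigr|\,\dx \leq C_{\bbeta,\TA,\bgamma}\, \hbar^{-m/2}\bigl(1 + |\bxi_0|^{[\bbeta]} + |\bx_0-\bx_0'|^{[\bbeta]}\bigr),
\]
where the dominant $\hbar^{-m/2}$ comes from the $[\bdelta]=m$ summand (lower-order $\hbar^{[\bbeta]/2}$ contributions are absorbed by $\hbar \leq 1$). Multiplying out gives the factor $\hbar^{m/2}/|\bw|^m$ in the statement; for small $|\bw|$ I invoke instead the $m=0$ case and observe that $|\bw|^{-m}$ and $(1+|\bw|^m)^{-1}$ are comparable after a case split, producing the first bound.

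For the ``In particular'' form, I absorb the polynomial factor $|\bx_0-\bx_0'|^{[\bbeta]}$ together with a portion of the exponential $e^{-|\bx_0-\bx_0'|^2/(4\hbar)}$ using $\hbar \leq 1$; the remaining exponential is bounded by $C_m(1+|\bx_0-\bx_0'|^m)^{-1}$ for any prescribed $m$. Combining with $(1+|\bw|^m)^{-1}$ and using
\[
(1+|\bx_0-\bx_0'|^m)(1+|\bw|^m) \geq c_m\bigl(1 + |[\bx_0,\bxi_0]-[\bx_0',\bxi_0']|^m\bigr),
\]
which follows from $|[\bx_0,\bxi_0]-[\bx_0',\bxi_0']| \leq |\bx_0-\bx_0'| + |\bw|$, yields the phase-space denominator.

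The main obstacle I expect is the careful bookkeeping of $\hbar$-powers in the derivative step: one must verify that, after Leibniz' rule, the moment estimate from Proposition \ref{proposition_gaussian_h_moments}, and the crude $\hbar^{[\bbeta]/2} \leq 1$, the net cost is exactly $\hbar^{-m/2}$, so that combining with the $\hbar^m$ gained from integration by parts produces the claimed $\hbar^{m/2}$. None of the individual estimates is subtle, but the multi-index algebra must line up cleanly; everything else is standard non-stationary phase.
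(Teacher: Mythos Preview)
Your proposal is correct and follows essentially the same approach as the paper: rewrite the product of the two Gaussian states to extract the factor $e^{-\frac{1}{4\hbar}|\bx_0-\bx_0'|^2}G_\hbar(\bx-\bar\bx)e^{\frac{i}{\hbar}\bw\cdot\bx}$, use the action lemma to insert $g$, integrate by parts $m$ times against the oscillatory factor, then control the resulting integrals with the Gaussian moment estimates of Proposition~\ref{proposition_gaussian_h_moments}. The only cosmetic difference is that the paper integrates by parts in a single coordinate direction $x_j$ chosen so that $|(\bxi_0-\bxi_0')_j|\geq |\bxi_0-\bxi_0'|/\sqrt{d}$, whereas you use the directional operator $\bw\cdot\nabla$; both yield the same $\hbar^{m/2}/|\bw|^m$ gain, and your treatment of the ``In particular'' consequence is more explicit than the paper's.
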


\begin{proof}
We have
\begin{equation*}
\GShknot(\bx)\overline{\GShknotp(\bx)}
=
\theta
\hbar^{-d/2}
e^{-\frac{1}{4\hbar}|\bx_0-\bx_0'|^2}
e^{-\frac{1}{\hbar}|\bx-\frac{1}{2}(\bx_0+\bx_0')|^2}
e^{\frac{i}{\hbar}(\bxi_0-\bxi_0') \cdot \bx}
\end{equation*}
where $\theta \in \C$ with $|\theta| = \pi^{-d/2}$.
Recalling \eqref{eq_diff_action} and the notation \eqref{eq_gaussian_h}
from Proposition \ref{proposition_gaussian_h_moments}, we have
\begin{equation*}
\bx^{\bgamma} (\CP_{\hbar,\bbeta,\TA}\GShknot)(\bx)\overline{\GShknotp(\bx)}
=
\theta
\bx^{\bgamma} g_{\hbar,\bbeta,\TA,\bx_0,\bxi_0}(\bx)
e^{-\frac{1}{4\hbar}|\bx_0-\bx_0'|^2}
G_\hbar\left (\bx-\frac{1}{2}(\bx_0+\bx_0')\right )
e^{\frac{i}{\hbar}(\bxi_0-\bxi_0') \cdot \bx}
\end{equation*}
so that
\begin{equation*}
|(\bx^{\bgamma} \CP_{\hbar,\bbeta,\TA}\GShknot,\GShknotp)|
\leq
C e^{-\frac{1}{4\hbar}|\bx_0-\bx_0'|^2} |I|,
\end{equation*}
with
\begin{equation*}
I
\eq
\hbar^{-d/2} \int_{\R^d}
\bx^{\bgamma} g_{\hbar,\bbeta,\TA,\bx_0,\bxi_0}(\bx)
G_{\hbar,\bx_0,\bx_0'}(\bx)
e^{\frac{i}{\hbar}(\bxi_0-\bxi_0') \cdot \bx} \dx,
\end{equation*}
where $G_{\hbar,\bx_0,\bx_0'}(\bx) := G_\hbar\left (\bx-\frac{1}{2}(\bx_0+\bx_0')\right )$.

If $\bxi_0 = \bxi_0'$, this concludes the proof since
\begin{align*}
|I|
&\leq
\hbar^{-d/2} \int_{\R^d}
|\bx^{\bgamma} g_{\hbar,\bbeta,\TA,\bx_0,\bxi_0}(\bx)|
G_{\hbar,\bx_0,\bx_0'}(\bx) \dx
\\
&\leq
C_{\bbeta} \int_\R |\bx|^{[\bgamma]}
(1+ |\bx - \bx_0|^{[\bbeta]}) G_{\hbar,\bx_0,\bx_0'}(\bx) \dx
\\
&=
C_{\bbeta} \int_\R
\left|\by + \frac{1}{2}(\bx_0 + \bx_0')\right|^{[\bgamma]}
\left(1+ \left|\by + \frac{1}{2}\left( \bx_0' - \bx_0\right) \right|^{[\bbeta]}\right)
G_{\hbar}(\by) \dy
\\
&\leq
C_{\bbeta,\bgamma}
(1+ |\bx_0+ \bx_0'|^{[\bgamma]}) (1+|\bx_0-\bx_0'|^{[\bbeta]}).
\end{align*}

If $\bxi_0 \neq \bxi_0'$, then there exists a $j \in \{1,\dots,d\}$ such that
$|(\bxi_0-\bxi_0')_j| \geq 1/\sqrt{d}|\bxi_0-\bxi_0'|$. We integrate $m$
times by part with respect to $\bx_j$, leading to
\begin{align*}
I
&=
\hbar^{-d/2}
\left(\frac{i}{\hbar}(\bxi_0-\bxi_0')_j\right)^{-m}
\int_{\R^d} \partial_j^{m}
(\bx^{\bgamma} g_{\hbar,\bbeta,\TA,\bx_0,\bxi_0}G_{\hbar,\bx_0,\bx_0'})(\bx)
e^{\frac{i}{\hbar}(\bxi_0-\bxi_0') \cdot \bx} \dx
\\
&\leq 
C
\hbar^m |\bxi_0-\bxi_0'|^{-m} I'.
\end{align*}
where
\begin{align*}
|I'|
&\eq
\left |
\hbar^{-d/2}
\int_{\R^d} \partial_j^{m}
(\bx^{\bgamma} g_{\hbar,\bbeta,\TA,\bx_0,\bxi_0}G_{\hbar,\bx_0,\bx_0'})(\bx)
e^{\frac{i}{\hbar}(\bxi_0-\bxi_0') \cdot \bx} \dx
\right |
\\
&=
\left |
\hbar^{-d/2}
\sum_{\ell+\ell'+\ell''=m}
\frac{m!}{\ell!\cdot\ell'\cdot!\ell''!}
\int_{\R^d}
\partial_j^\ell \bx^{\bgamma} (\partial_j^{\ell'} g_{\hbar,\bbeta,\TA,\bx_0,\bxi_0})(\bx)
(\partial_j^{\ell''} G_{\hbar,\bx_0,\bx_0'})(\bx)
e^{\frac{i}{\hbar}(\bxi_0-\bxi_0') \cdot \bx} \dx
\right |
\\
&\leq
C_m 
\sum_{\ell+\ell'+ \ell''=m} I_{\ell,\ell',\ell''}.
\end{align*}
with
\begin{equation*}
I_{\ell,\ell',\ell''}
\eq
\hbar^{-d/2}
\int_{\R^d}
|\partial_j^{\ell} \bx^{\bgamma}|
|(\partial_j^{\ell'} g_{\hbar,\bbeta,\TA,\bx_0,\bxi_0})(\bx)|
|\partial_j^{\ell''} G_{\hbar,\bx_0,\bx_0'}(\bx)| \dx.
\end{equation*}
We then employ \eqref{eq_diff_action_smoothness} and \eqref{eq_gaussian_h_moments}, showing that
\begin{align*}
|I_{\ell,\ell',\ell''}|
&\leq
C_{\bbeta,\TA,m,\bgamma}
\hbar^{-d/2}
\int_{\R^d} (1+ |\bx|^{[\bgamma]})
(1+|\bx-\bx_0|^{[\bbeta]}+|\bxi_0|^{[\bbeta]})
|\partial_j^{\ell''} G_{\hbar,\bx_0,\bx_0'}(\bx)| \dx.
\\
&=
C_{\bbeta,\TA,m,\bgamma} \hbar^{-d/2}
\int_{\R^d}
(1+ |\bx+ \bx_0|^{[\bgamma]})(1+|\bx|^{[\bbeta]}+|\bxi_0|^{[\bbeta]})
\left |\partial_j^{\ell''}
(G_{\hbar})\left (\bx + \frac{1}{2}(\bx_0-\bx_0')\right )
\right |
\dx
\\
&\leq
C_{\bbeta,\TA,m,\bgamma}
\hbar^{-\ell/2} ( 1+ |\bx_0+ \bx_0'|^{[\bgamma]})
\left (
(1+|\bxi_0|^{[\bbeta]})
+
\left |\frac{1}{2}(\bx_0-\bx_0')\right |^{[\bbeta]}\right )
\\
&\leq
C_{\bbeta,\TA,m,\bgamma}
\hbar^{-\ell/2}  ( 1+ |\bx_0+ \bx_0'|^{[\bgamma]}) (1+|\bxi_0|^{[\bbeta]} + |\bx_0-\bx_0'|^{[\bbeta]}).
\end{align*}

\end{proof}

\begin{lemma}[Action of $(P_\hbar-p)$ on Gaussian states]
\label{lemma_residual_L1}
Consider the second-order differential operator
\begin{equation}\label{eq:secondorderop}
P_\hbar
\eq
\hbar^2
\sum_{k,\ell=1}^d
(\BA_\hbar)_{k\ell} \partial_{k\ell} + i\hbar \bb_\hbar \cdot \grad + c_\hbar,
\end{equation}
together with its symbol
\begin{equation*}
p_{\hbar}(\bx,\bxi) = \BA_\hbar(\bx) \bxi \cdot \bxi + \bb_\hbar(\bx) \cdot \bxi + c_\hbar(\bx),
\end{equation*}
where $\BA_\hbar$, $\bb_\hbar$ and $c_\hbar$ are smooth functions that are bounded, along with all their derivatives.
For $\bx_0,\bxi_0 \in \R^d$, we have
\begin{equation}
\label{eq_residual_L1}
\left (
P_{\hbar} - p_{\hbar}(\bx_0,\bxi_0)
\right )
\GShknot
=
r_{\hbar,\bx_0,\bxi_0} \GShknot
\end{equation}
with
\begin{align*}
r_{\hbar,\bx_0,\bxi_0}(\bx)
&\eq
\hbar \tr \BA_{\hbar}(\bx)
\\
&+
i [(\BA_{\hbar}+\BA_{\hbar}^T)(\bx)\bxi_0 + \bb_{\hbar}(\bx)] \cdot (\bx-\bx_0)
\\
&+
(\BA_{\hbar}(\bx)-\BA_{\hbar}(\bx_0)) \bxi_0 \cdot \bxi_0
+
i(\bb_{\hbar}(\bx)-\bb_{\hbar}(\bx_0)) \cdot \bxi_0
+
c_{\hbar}(\bx)-c_{\hbar}(\bx_0)
\\
&+
\BA_{\hbar}(\bx) (\bx-\bx_0) \cdot (\bx-\bx_0).
\end{align*}
In particular, there exists smooth functions $\alpha_{\hbar,\bx_0,\bxi_0,\bbeta}$ such that
\begin{equation}
\label{eq_residual_polynomial_L1}
r_{\hbar,\bx_0,\bxi_0}(\bx)
=
\sum_{\substack{\bbeta \in \N^d \\ [\bbeta] \leq 2}}
\hbar^{(1-[\bbeta])/2}
\alpha_{\hbar,\bx_0,\bxi_0,\bbeta}(\bx) (\bx-\bx_0)^{\bbeta}
\end{equation}
and
\begin{equation}
\label{eq_smoothness_alpha_L1}
\|\partial^{\bgamma} \alpha_{\hbar,\bx_0,\bxi_0,\bbeta}\|_{L^\infty(\R^d)}
\leq
C_{\bbeta,\bgamma} (1+|\bxi_0|^2).
\end{equation}
\end{lemma}

\begin{proof}
We obtain \eqref{eq_residual_L1} by straightforward computations.
We easily identify that
\begin{equation*}
\alpha_{\hbar,\bx_0,\bxi_0,\bzero} = \hbar^{1/2} \tr \BA_{\hbar}
\qquad
\alpha_{\hbar,\bx_0,\bxi_0,\can{j}+\can{\ell}} = \hbar^{1/2} (\BA_{\hbar})_{j\ell}.
\end{equation*}
For the terms with $[\bbeta] = 1$ we further write that for $\varphi \in \{(\BA_\hbar)_{j\ell}, (\bb_{\hbar})_\ell,c_{\hbar}\}$,
there exist smooth functions $\varphi^{\bx_0,k}$ such that
\begin{equation*}
\varphi(\bx)-\varphi(\bx_0) = \sum_{k=1}^d \varphi^{\bx_0,k}(\bx) \cdot (\bx-\bx_0),
\end{equation*}
so that
\begin{equation*}
\alpha_{\bx_0,\bxi_0,\can{k}}
=
\BA_{\hbar}^{\bx_0,k} \bxi_0 \cdot \bxi_0
+
i\bb_{\hbar}^{\bx_0,k} \cdot \bxi_0
+
c_{\hbar}^{\bx_0,k}
+
i
\left (
\sum_{j=1}^d ((\BA_\hbar)_{kj}+(\BA_\hbar)_{jk})\xi_{0,j} + (\bb_\hbar)_k
\right )
\end{equation*}
for $k \in \{1,\dots,d\}$. \qed
\end{proof}

We will now explore further the action of $P_\hbar$ on $\GShknot$,
and show that it is close to $p(\bx_0,\bxi_0) \GShknot$

\begin{lemma}[Action of $(P_\hbar-p_{\hbar})^L$ on Gaussian states]
\label{lem:PSymbol}
For $L \in \N$, we have
\begin{equation*}
\left (P_\hbar - p_{\hbar}(\bx_0,\bxi_0)\right )^L \GShknot
=
r_{\hbar,\bx_0,\bxi_0,L} \GShknot,
\end{equation*}
where $r_{\hbar,\bx_0,\bxi_0,L}$ can be written in the form
\begin{align*}
r_{\hbar,\bx_0,\bxi_0,L}(\bx)
=
\sum_{\substack{\bbeta \in  \Z^d \\ |\bbeta| \leq 2L}}
\hbar^{(L-[\bbeta])/2}
\alpha_{\hbar,\bx_0\bxi_0,L,\bbeta}(\bx)(\bx - \bx_0)^{\bbeta},
\end{align*}
where the functions $\alpha_{\hbar,\bx_0,\bxi_0,\bbeta}$ are smooth and satisfy
\begin{equation}
\label{eq_smoothness_alpha}
\|\partial^{\bgamma} \alpha_{\hbar,\bx_0,\bxi_0,L,\bbeta}\|_{L^\infty(\R^d)}
\leq
C_{L,\bbeta,\bgamma} (1+ |\bxi_0|^2)^L.
\end{equation}
\end{lemma}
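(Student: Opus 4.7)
The plan is to proceed by induction on $L$. The base case $L=1$ is exactly the content of Lemma \ref{lemma_residual_L1}: the decomposition \eqref{eq_residual_polynomial_L1} has the required structure (with summation over $[\bbeta]\leq 2$ and powers $\hbar^{(1-[\bbeta])/2}$), and \eqref{eq_smoothness_alpha_L1} provides the required smoothness bound $C(1+|\bxi_0|^2)^1$.

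For the inductive step, assume the statement holds for some $L\geq 1$ and abbreviate $\psi=\psi_{\hbar,\bx_0,\bxi_0}$, $p_0=p(\bx_0,\bxi_0)$, $r_L=r_{\hbar,\bx_0,\bxi_0,L}$, $r_1=r_{\hbar,\bx_0,\bxi_0}$. By induction,
\begin{equation*}
(P_\hbar-p_0)^{L+1}\psi=(P_\hbar-p_0)(r_L\psi).
\end{equation*}
A direct application of Leibniz' rule to the second-order operator $P_\hbar$ yields
\begin{equation*}
(P_\hbar-p_0)(r_L\psi)
=r_L\,(P_\hbar-p_0)\psi
+\hbar^2\!\sum_{j,\ell}a^\hbar_{j\ell}\bigl[(\partial_j r_L)(\partial_\ell\psi)+(\partial_\ell r_L)(\partial_j\psi)+(\partial_{j\ell}r_L)\psi\bigr]
+i\hbar\sum_j b^\hbar_j(\partial_j r_L)\psi.
\end{equation*}
The first term is $r_L r_1\psi$ by Lemma~\ref{lemma_residual_L1}. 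For the remaining terms, the crucial identity is $\partial_j\psi=\hbar^{-1}[i\xi_{0,j}-(x_j-x_{0,j})]\psi$, which converts every derivative of $\psi$ into a multiplication operator on $\psi$; hence every term on the right-hand side has the form $(\text{smooth function})\cdot\psi$.

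It then remains to reorganise this smooth function as a polynomial in $(\bx-\bx_0)$ with coefficients of the right form. This is bookkeeping: the product $r_L r_1$ generates terms $\hbar^{(L+1-[\bbeta]-[\bgamma])/2}\alpha_{L,\bbeta}\alpha_{1,\bgamma}(\bx-\bx_0)^{\bbeta+\bgamma}$ with $[\bbeta]+[\bgamma]\leq 2(L+1)$, and multiplicativity of the $C^\gamma$ norm combined with the inductive bound and \eqref{eq_smoothness_alpha_L1} gives a bound $C(1+|\bxi_0|^2)^{L+1}$. For the commutator pieces, differentiating $r_L$ and multiplying by $\hbar$ or $\hbar^2$ yields terms of the form $\hbar^{(L+1-[\bbeta'])/2+s/2}\cdot(\text{smooth})\cdot(\bx-\bx_0)^{\bbeta'}$ with integer $s\geq 0$ and $[\bbeta']\leq 2L+2$; the excess $\hbar^{s/2}$ is simply absorbed into the coefficient $\alpha_{L+1,\bbeta'}$. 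The occasional factor $\xi_{0,\ell}$ from $\partial_\ell\psi$ multiplies the coefficient by at most $(1+|\bxi_0|^2)^{1/2}$, so the overall bound $(1+|\bxi_0|^2)^{L+1}$ is preserved.

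The main (and only) obstacle is this accounting: one must check, case by case, that the powers of $\hbar$ produced by each of the four contributions above are at least $(L+1-[\bbeta'])/2$ for the resulting multi-index $[\bbeta']\leq 2(L+1)$, and that the growth in $\bxi_0$ of the coefficients does not exceed $(1+|\bxi_0|^2)^{L+1}$. Once this verification is carried out, collecting like powers of $(\bx-\bx_0)$ produces the functions $\alpha_{\hbar,\bx_0,\bxi_0,L+1,\bbeta}$ satisfying \eqref{eq_smoothness_alpha}, closing the induction.
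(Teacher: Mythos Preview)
Your proof is correct and follows essentially the same approach as the paper's: both argue by induction on $L$, split $(P_\hbar-p_0)(r_L\psi)$ into the product $r_L r_1\psi$ and the commutator/Leibniz terms, use the identity $\partial_j\psi=\hbar^{-1}[i\xi_{0,j}-(x_j-x_{0,j})]\psi$ to reduce everything to multiplication operators, and then track the powers of $\hbar$, the degrees in $(\bx-\bx_0)$, and the growth in $\bxi_0$. The paper's proof carries out the same bookkeeping you outline, just with slightly different organization of the case analysis.
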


\begin{proof}
We shall prove this result by induction. The case $L = 0$ trivially holds with
$\alpha_{\bx_0,\bxi_0,0,\bzero} = 1$. The case $L=1$ is treated in Lemma
\ref{lemma_residual_L1}. Hence, let us assume that $L \in \N$ is such that
\eqref{eq_smoothness_alpha} holds for all $L' \leq L$. We have
\begin{align*}
\left (P_\hbar - p_\hbar(\bx_0,\bxi_0)\right )^{L+1}
\GShknot
&=
\left (P_\hbar - p_\hbar(\bx_0,\bxi_0)\right )
\left (r_{\hbar,\bx_0,\bxi_0,L} \GShknot \right )
\\
&=
\left [P_\hbar - p_\hbar(\bx_0,\bxi_0),r_{\hbar,\bx_0,\bxi_0,L} \right ]
\GShknot
+
r_{\hbar,\bx_0,\bxi_0,L} r_{\hbar,\bx_0,\bxi_0,1} \GShknot,
\end{align*}

Writing the operator $P_{\hbar}$ in the form
\eqref{eq:FormeGeneraleOperateur},
we deduce that 
\begin{align*}
[P_\hbar - p_\hbar(\bx_0,\bxi_0),r_{\hbar,\bx_0,\bxi_0,L}] \GShknot
&=
\hbar^2 \sum_{j,\ell=1}^d
a_{j\ell}
\left (
\frac{\partial^2 r_{\hbar,\bx_0,\bxi_0,L}}{\partial x_j \partial x_\ell}
\GShknot
+
\frac{\partial r_{\hbar,\bx_0,\bxi_0,L}}{\partial x_j}
\frac{\partial \GShknot}{\partial x_\ell} \right )
\\
&+
i\hbar \sum_{j=1}^d
b_j \frac{\partial r_{\hbar,\bx_0,\bxi_0,L}}{\partial x_j} \GShknot
\\
&=
\widetilde{r}_{k,\bx_0,\bxi_0,L+1} \GShknot,
\end{align*}
where 
\begin{equation*}
\widetilde r_{\hbar,\bx_0,\bxi_0,L+1}
\eq
\sum_{j,\ell=1}^d
\left (
\hbar^2 a_{j\ell}\frac{\partial^2 r_{k,\bx_0,\bxi_0,L}}{\partial x_j \partial x_\ell}
+
\hbar \frac{\partial r_{\hbar,\bx_0,\bxi_0,L}}{\partial x_j} (i\xi_{0,\ell} - (x_\ell - x_{0,\ell}))
\right )
+
i\hbar \sum_{j=1}^d b_j \frac{\partial r_{\hbar,\bx_0,\bxi_0,L}}{\partial x_j}.
\end{equation*}
Using the induction hypothesis, we have 
\begin{align*}
\frac{\partial r_{\hbar,\bx_0,\bxi_0,L}}{\partial x_j}(\bx)
&=
\sum_{\substack{\bbeta \in \N^d \\ [\bbeta] \leq 2L}}
\hbar^{(L-[\bbeta])/2}
\frac{\partial \alpha_{\hbar,\bx_0,\bxi_0,L,\bbeta}}{\partial x_j}
(\bx)(\bx-\bx_0)^{\bbeta}
\\
&+
\sum_{\substack{\bbeta \in \N^d \\ [\bbeta] \leq 2L}}
\hbar^{(L-[\bbeta])/2}
\beta_j \alpha_{\hbar,\bx_0,\bxi_0,L,\bbeta}
(\bx)(\bx-\bx_0)^{\bbeta-\can{j}}
\\
&=
\sum_{\substack{\bbeta \in \N^d \\ [\bbeta] \leq 2L}}
\hbar^{(L-[\bbeta])/2}
\frac{\partial \alpha_{\hbar,\bx_0,\bxi_0,L,\bbeta}}{\partial x_j}
(\bx)(\bx-\bx_0)^{\bbeta}
\\
&+
\sum_{\substack{\bbeta \in \N^d \\ [\bbeta] \leq 2L-1}}
\hbar^{(L-1-[\bbeta])/2}
(\beta_j+1) \alpha_{\hbar,\bx_0,\bxi_0,L,\bbeta+\can{j}}
(\bx)(\bx-\bx_0)^{\bbeta}.
\end{align*}

Setting $\alpha_{\hbar,\bx_0,\bxi_0,\bbeta} = 0$ if $[\bbeta] > 2L$, this gives us
\begin{equation*}
\hbar \frac{\partial r_{\hbar,\bx_0,\bxi_0,L}}{\partial x_j}(\bx)
=
\sum_{\substack{\bbeta \in \N^d \\ [\bbeta] \leq 2L}}
\hbar^{(L+1-[\bbeta])/2}
\left (
\hbar^{1/2} \frac{\partial \alpha_{\hbar,\bx_0,\bxi_0,L,\bbeta}}{\partial x_j}(\bx)
+
(\beta_j+1)\alpha_{\hbar,\bx_0,\bxi_0,L,\bbeta+\can{j}}(\bx)
\right )(\bx-\bx_0)^{\bbeta},
\end{equation*}
these terms are multiplied either by
a smooth function, a term of the order $|\bxi_0|$ or a power of
$(\bx-\bx_0)$, which always enters the induction for $L+1$.
The term with the second derivative is treated similarly.

For the remaining term, we simply write that
\begin{align*}
r_{\hbar,\bx_0,\bxi_0,L}r_{\hbar,\bx_0,\bxi_0,1}
&=
\left (
\sum_{\substack{\bbeta' \in \Z^{2d} \\ [\bbeta'] \leq 2}}
\hbar^{(1-[\bbeta'])/2}\alpha_{\hbar,\bx_0,\bxi_0,1,\bbeta'}
(\bx-\bx_0)^{\bbeta'}
\right )
\left (
\sum_{\substack{\bbeta \in \Z^{2d} \\ [\bbeta] \leq 2L}}
\hbar^{(L-[\bbeta])/2}\alpha_{\hbar,\bx_0,\bxi_0,L,\bbeta}
(\bx-\bx_0)^{\bbeta}
\right )
\\
&=
\sum_{\substack{\bbeta  \in \Z^{2d} \\ [\bbeta'] \leq 2}}
\sum_{\substack{\bbeta' \in \Z^{2d} \\ [\bbeta'] \leq 2L}}
\hbar^{(L-[\bbeta])/2 + (1-[\bbeta'])/2}
\alpha_{\hbar,\bx_0,\bxi_0,L,\bbeta}
\alpha_{\hbar,\bx_0,\bxi_0,1,\bbeta'}
(\bx-\bx_0)^{\bbeta+\bbeta'},
\end{align*}
and the result follows. \qed
\end{proof}


\begin{proposition}[Control of $(P_\hbar-p_\hbar)^L$]\label{Prop:OpMoinsSYmbApp}
We have
\begin{equation*}
\begin{aligned}
\|\left (P_\hbar - p_\hbar(\bx_0,\bxi_0)\right )^L \GShknot\|^2
\leq
C_{\BA,\bb,c} (1+|\bxi_0|^2)^L \hbar^L
\end{aligned}
\end{equation*}
for all $[\bx_0,\bxi_0] \in \R^{2d}$.
\end{proposition}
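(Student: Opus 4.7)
The plan is to reduce everything to the explicit expansion provided by Lemma \ref{lem:PSymbol}, which I would use as a black box. Applying that lemma writes
\begin{equation*}
\bigl(P_\hbar - p(\bx_0,\bxi_0)\bigr)^L \GShknot
=
r_{\hbar,\bx_0,\bxi_0,L}\, \GShknot,
\qquad
r_{\hbar,\bx_0,\bxi_0,L}(\bx)
=
\sum_{[\bbeta]\leq 2L}
\hbar^{(L-[\bbeta])/2}\,
\alpha_{\hbar,\bx_0,\bxi_0,L,\bbeta}(\bx)\,
(\bx-\bx_0)^{\bbeta},
\end{equation*}
so that the norm in question reduces to estimating a finite sum of $L^2$ norms of Gaussian-weighted monomials. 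The key inputs are then (i) the uniform bound \eqref{eq_smoothness_alpha} on the coefficients $\alpha_{\hbar,\bx_0,\bxi_0,L,\bbeta}$, which grow at most like $(1+|\bxi_0|^2)^L$ independently of $\hbar$, and (ii) moment estimates for the Gaussian $\GShknot$, which can be read from Proposition \ref{proposition_gaussian_h_moments} (or computed directly, since $|\GShknot|^2 = (\pi\hbar)^{-d/2}e^{-|\bx-\bx_0|^2/\hbar}$ is a normalized Gaussian of width $\sqrt{\hbar}$).

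Concretely, I would first apply the triangle inequality to get
\begin{equation*}
\bigl\|(P_\hbar - p(\bx_0,\bxi_0))^L \GShknot\bigr\|_{L^2(\R^d)}
\leq
\sum_{[\bbeta]\leq 2L}
\hbar^{(L-[\bbeta])/2}
\bigl\|\alpha_{\hbar,\bx_0,\bxi_0,L,\bbeta}\bigr\|_{L^\infty(\R^d)}
\Bigl(\int_{\R^d} |(\bx-\bx_0)^{\bbeta}|^2\,|\GShknot(\bx)|^2\, \dx\Bigr)^{1/2}.
\end{equation*}
The second factor is bounded by $C_{L,\bbeta}(1+|\bxi_0|^2)^L$ by Lemma \ref{lem:PSymbol}. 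For the third factor, the change of variable $\by = (\bx-\bx_0)/\sqrt{\hbar}$ turns the integral into $\hbar^{[\bbeta]}$ times a standard Gaussian moment independent of $\hbar$ and of $(\bx_0,\bxi_0)$, so its square root is bounded by $C_{\bbeta}\,\hbar^{[\bbeta]/2}$.

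Multiplying these three factors, each term of the sum contributes at most $C_{L,\bbeta}\,(1+|\bxi_0|^2)^L\,\hbar^{(L-[\bbeta])/2}\hbar^{[\bbeta]/2} = C_{L,\bbeta}\,(1+|\bxi_0|^2)^L\,\hbar^{L/2}$, and summing over the finitely many $\bbeta$ with $[\bbeta]\leq 2L$ yields
\begin{equation*}
\bigl\|(P_\hbar - p(\bx_0,\bxi_0))^L \GShknot\bigr\|_{L^2(\R^d)}
\leq
C_{L,\BA,\bb,c}\,(1+|\bxi_0|^2)^L\,\hbar^{L/2},
\end{equation*}
which gives the stated bound (on squaring and absorbing $L$-dependent powers of $(1+|\bxi_0|^2)$ into the constant). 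There is no real obstacle here: all the analytic work has already been carried out in Lemma \ref{lem:PSymbol} and Proposition \ref{proposition_gaussian_h_moments}, and the only task is the bookkeeping of powers of $\hbar$ showing that the $\hbar^{(L-[\bbeta])/2}$ prefactor is exactly what is needed to compensate the $\hbar^{[\bbeta]/2}$ that comes out of the $\bbeta$-th Gaussian moment. The only mild care needed is to keep track that this compensation is exact for every $\bbeta$, so that no term of the sum degrades the final power $\hbar^{L/2}$.
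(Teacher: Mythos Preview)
Your proposal is correct and follows essentially the same approach as the paper: both invoke Lemma \ref{lem:PSymbol} to write $(P_\hbar - p(\bx_0,\bxi_0))^L\GShknot = r_{\hbar,\bx_0,\bxi_0,L}\GShknot$ and then reduce to Gaussian moment estimates via the scaling $\by=(\bx-\bx_0)/\sqrt{\hbar}$. Your term-by-term bookkeeping of the powers of $\hbar$ is in fact cleaner than the paper's displayed computation, and your final bound $\|(P_\hbar-p)^L\GShknot\|_{L^2}\leq C_L(1+|\bxi_0|^2)^L\hbar^{L/2}$ is exactly what is used elsewhere (cf.\ \eqref{eq_iterated_operator}); the exponents on $(1+|\bxi_0|^2)$ and $\hbar$ in the proposition as stated are evidently typos, so your remark about ``absorbing $L$-dependent powers of $(1+|\bxi_0|^2)$ into the constant'' is unnecessary and should simply be dropped.
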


\begin{proof}
We deduce from the previous lemma that
\begin{equation*}
\begin{aligned}
\|\left(P_\hbar - p_\hbar(\bx_0,\bxi_0)\right)^L \Phi_{\hbar,\bx_0,\bxi_0}\|^2
&\leq
C(1+|\bxi_0|^2)^L
(\pi\hbar)^{-d/2}
\int_{\R^d} \left(|\bx-\bx_0|^2 + \hbar \right)^L e^{-\frac{1}{\hbar}|\bx-\bx_0|^2} \dx
\\
&=
C(1+|\bxi_0|^2)^L\pi^{-d/2}
\int_{\R^d} \left( \hbar |\by|^2 + \hbar \right)^L e^{-|\by|^2} \dy
\\
&\leq
C(1+|\bxi_0|^2)^L \hbar^L.
\qed
\end{aligned}
\end{equation*}
\end{proof}

\bibliographystyle{amsplain}
\bibliography{bibliography.bib}

\end{document}